\newtheorem{theorem}{Theorem}[section]
\newtheorem{proposition}{Proposition}[section]
\newtheorem{lemma}{Lemma}[section]
\newtheorem{definition}{Definition}[section]
\newtheoremstyle{mycase}{5pt}{5pt}{\upshape}{}{\bfseries}{.}{ }{} \theoremstyle{mycase}
\newtheoremstyle{myexample}{5pt}{5pt}{\upshape}{}{\bfseries}{.}{ }{} \theoremstyle{myexample}
\renewcommand{\theequation}{\thesection.\arabic{equation}}
\numberwithin{equation}{section}
\renewcommand{\hat}{\widehat}
\renewcommand{\tilde}{\widetilde}
\newcommand{\Var}{\mathrm{Var}}
\newcommand{\bX}{\mathbf{X}}
\newcommand{\bA}{\mathbf{A}}
\newcommand{\bB}{\mathbf{B}}
\newcommand{\bk}{\mathbf{k}}
\newcommand{\bb}{\mathbf{b}}
\newcommand{\bi}{\mathbf{i}}
\newcommand{\bj}{\mathbf{j}}
\newcommand{\DP}{{\rm{dp}}}
\newcommand{\dHSIC}{{\rm{dHSIC}}}
\newcommand{\dpdHSIC}{{\rm{dpdHSIC}}}
\newcommand{\Lap}{{\rm{Laplace}}}
\newcommand{\ham}{{\rm{ham}}}
\newcommand{\TV}{{\rm{TV}}}
\newcommand{\KL}{{\rm{KL}}}
\newcommand{\dd}{{\rm{d}}}
\newcommand{\bPi}{\bm{\Pi}}
\newcommand{\bpi}{\bm{\pi}}
\newcommand{\bphi}{\bm{\varphi}}
\newcommand{\bve}{\bm{\varepsilon}}
\newcommand{\blambda}{\bm{\lambda}}
\newcommand{\mbR}{\mathbb{R}}
\newcommand{\mbE}{\mathbb{E}}
\newcommand{\mbP}{\mathbb{P}}
\newcommand{\mbX}{\mathbb{X}}
\newcommand{\mcF}{\mathcal{F}}
\newcommand{\mcH}{\mathcal{H}}
\newcommand{\mcX}{\mathcal{X}}
\newcommand{\mcP}{\mathcal{P}}
\newcommand{\mcS}{\mathcal{S}}
\newcommand{\mcA}{\mathcal{A}}
\begin{document}
\allowdisplaybreaks[3] 
	
\title{\bf Differentially Private Joint Independence Test}
\author{{Xingwei Liu$^1$, 
Yuexin Chen$^1$, Jin-Ting Zhang$^2$,
Wangli Xu$^1$\thanks{Corresponding author: wlxu@ruc.edu.cn}}\\~\\
	{\small \it $^{1}$ Center for Applied Statistics and School of Statistics, Renmin University of China, Beijing, China}\\
    {\small \it $^{2}$ Department of Statistics and Data Science, National University of Singapore, Singapore}}

\date{}
\maketitle
	
\begin{abstract}
Identification of joint dependence among several random vectors plays an important role in many statistical applications, where the data may contain sensitive or confidential information. In this paper, we consider the $d$-variable Hilbert-Schmidt independence criterion (dHSIC) in the context of differential privacy. 
Given that the limiting distribution of the empirical estimate of dHSIC is a complicated Gaussian chaos, constructing tests in the non-private regime is typically based on permutation and bootstrap methods. 
To detect joint dependence under privacy constraints, we propose a dHSIC-based testing procedure employing a differentially private permutation methodology. 
We show that our method enjoys privacy guarantees, a valid level, and pointwise consistency, whereas the bootstrap counterpart suffers from inconsistent power. 
We further investigate the uniform power of the proposed test under the dHSIC and $L_2$ metrics, showing that the proposed test attains the minimax optimal power across different privacy regimes.  As a byproduct, we show that the non-private permutation dHSIC test proposed in \cite{Pfister2018KernelbasedTF} is a special case of our differentially private permutation test, and our results also establish its pointwise and uniform power--thus resolving an open problem from that work. Both numerical simulations and real data analysis in causal inference suggest that our proposed test performs well empirically.

\medskip
		
\noindent {\it Keywords:} Differential Privacy; Independence Test; Kernel Methods; Resampling Method
	
\end{abstract}

\section{Introduction}
Testing independence among variables is a fundamental problem in statistics, with applications spanning causal inference, feature screening, and gene detection \citep{Chatterjee2021ANC}. 
Two of the most classical nonparametric tests are based on Spearman's $\rho$ and Kendall's $\tau$, but these are not effective for detecting non-monotonic associations. 
Numerous proposals have emerged to address this limitation. 
Among these, kernel-based methods \citep{Gretton2005MeasuringSD} and distance correlation \citep{Szekely2007MeasuringAT,Szekely2009BrownianDC} have received considerable attention for both their theoretical appeal and simplicity of form. Although these approaches quantify the difference between the joint distributions of two random vectors and their marginals using different metrics, the equivalence between the two frameworks was established by \cite{Sejdinovic2013EquivalenceOD}. 
Other representative approaches include mutual information \citep{cover2005elements}, projection-based measures \citep{Zhu2017ProjectionCB,Chen2025AKI}, and rank-based correlations \citep{Bergsma2014ACT}. 
We refer the reader to \cite{Tjstheim2022StatisticalDB} for a comprehensive review.

Modern statistical methodologies increasingly require the exploration and identification of joint independence structures among $d > 2$ random variables or vectors. This need extends far beyond the conventional scope of pairwise independence testing, which is limited to the case $d = 2$.
In causal discovery, for example, inferring pairwise independence is generally insufficient for model diagnostic checking, particularly for models with an additive noise structure and their associated directed acyclic graphs (DAGs) \citep{Pfister2018KernelbasedTF}. 
Another example is independent component analysis, which seeks to represent multivariate data using jointly independent components \citep{Chen2006EfficientIC,Matteson2017IndependentCA}. 
However, joint independence testing is acknowledged as a more ambitious task than pairwise independence testing, and has therefore attracted significant attention and contributions from many scholars. 
As a generalization of kernel-based methods \citep{Gretton2005MeasuringSD}, \cite{Sejdinovic2013AKT} and \cite{Pfister2018KernelbasedTF} embed the joint distribution and the product of the marginals into a reproducing kernel Hilbert space (RKHS). They define the $d$-variable Hilbert-Schmidt independence criterion (dHSIC) as the squared distance among these embeddings. 
\cite{Chakraborty2019DistanceMF} generalized the notion of Brownian covariance \citep{Szekely2009BrownianDC} and defined joint dCov as a linear combination of pairwise dCov and their higher-order counterparts. 
Other recent developments include \cite{Bottcher2019DistanceMN,Drton2020HighdimensionalCI,Niu2022DistributionfreeJI}.

Joint independence testing is particularly relevant in areas such as economics, finance, and biology, where data are often sensitive or confidential. 
For example, model diagnostic checking for DAGs may involve datasets containing sensitive personal financial information and detailed body indices associated with rare diseases \citep{Chakraborty2019DistanceMF}. 
Analyzing such data therefore raises concerns about potential privacy leakage. 
Differential privacy, introduced by \cite{Dwork2006CalibratingNT}, has become the most widely adopted formal framework for privacy protection in both academia \citep{Dwork2014TheAF} and industry. 
Its broad adoption has motivated extensive work in statistics and related fields on differentially private (DP) mechanisms and variants \citep{Bun2016,Mironov2017,dong2022gaussian,Su2024ASV}. 
Across these developments, a central challenge is to balance privacy protection with statistical efficiency \citep{Cai2021TheCO}.

A line of work has focused on black-box approaches for privatizing existing hypothesis tests, most notably through the subsample-and-aggregate framework \citep{Nissim2007}. 
This strategy partitions the data into smaller subsets, applies a non-private test to each subset, and privately aggregates the resulting decisions or statistics. 
Recent work by \cite{Kazan2023TOT} and \cite{Pena2025DifferentiallyPH} has demonstrated the flexibility of this framework. However, such flexibility comes at the cost of statistical power \citep{Canonne2020}, especially in low-privacy regimes, a phenomenon also reflected in our numerical studies in Section \ref{sec_simulation}. 
To avoid this loss, another line of work has developed private tests for specific problems, including ANOVA \citep{Campbell2018DifferentiallyPA}, nonparametric tests \citep{Couch2019}, identity tests \citep{Cai2017}, conditional independence tests \citep{kalemaj2024}, and high-dimensional tests \citep{narayanan2022}. 
To the best of our knowledge, however, DP joint independence testing remains unexplored. The closest related direction is private independence testing, which has largely focused on $\chi^2$-type tests \citep{Gaboardi2016,Kakizaki2017}. More recently, \cite{Kim2026DPP} proposed a framework for private permutation tests and applied it to two-sample and independence testing, showing that their DP independence test is theoretically optimal and empirically effective. 
A natural approach is to reduce DP joint independence testing to a sequence of pairwise tests, as detailed in Algorithm~\ref{al_mul_dphsic}. This strategy, however, is asymmetric in the variables, loses power due to multiple testing, and requires the privacy budget to be split across several tests; see \cite{Pfister2018KernelbasedTF} and the discussion for Algorithm~\ref{al_mul_dphsic}.

In this paper, we propose a DP joint independence test by combining dHSIC \citep{Pfister2018KernelbasedTF} with private permutation calibration, and call the resulting method DP permutation dHSIC (dpdHSIC). 
It inherits the finite-sample validity of the permutation test in the non-private regime and further ensures differential privacy and uniform power guarantees. 
On the theoretical side, we establish minimax results for dpdHSIC across varying privacy regimes in terms of dHSIC and $L_2$ metrics. 
On the empirical side, the proposed test outperforms several competitors in comprehensive comparisons. 
Our contributions are summarized as follows. 
\begin{itemize} 
    \item \textbf{Permutation Theory for dHSIC.}
    \cite{Pfister2018KernelbasedTF} established the validity of the permutation dHSIC test but left its power consistency unresolved. We address this open problem by deriving concentration inequalities for permuted U- and V-statistics for dHSIC, where the main technical challenge lies in handling their high-order structures. These results establish pointwise consistency and uniform power for DP permutation dHSIC, including the non-private permutation dHSIC test as a special case. 
    
    
    \item \textbf{Failure of Bootstrap Under Privacy.}
    We illustrate that private bootstrap dHSIC differs fundamentally from private permutation dHSIC, even though both are implemented via the same private resampling procedure. The private permutation statistic has vanishing sensitivity and uniform power, whereas its bootstrap counterpart faces a non-vanishing sensitivity barrier and can be inconsistent even against fixed alternatives. This gap reveals that the permutation construction is essential for obtaining meaningful privacy-power guarantees in this problem. 

    \item \textbf{Uniform Power and Minimax Optimality.} 
    We characterize the minimum separation required by dpdHSIC under both the dHSIC metric and the $L_2$ metric. We further demonstrate that the proposed test attains minimax optimal power across different privacy regimes under the dHSIC metric. Our uniform power improves the polynomial factors in the error rates of the general result in \citet[Theorem~4]{Kim2026DPP} to logarithmic factors. We also analyze the U-statistic counterpart in Appendix~\ref{sec_u_dhsic}, which further illustrates the bias-privacy trade-off between the U- and V-statistics.  
\end{itemize}

The rest of the paper is organized as follows. Section~\ref{sec_dhsic_resampling} reviews the dHSIC statistic for joint independence testing and non-private resampling tests. 
Section~\ref{sec_dpdhsic} introduces the DP resampling test, applies it to permutation dHSIC, and shows that the bootstrap counterpart suffers from inconsistent power. 
Section~\ref{sec_power_minimax} investigates the uniform power and minimax optimality of the proposed permutation test under the dHSIC and $L_2$ metrics. 
Empirical performance based on numerical simulations and real data is reported in Sections~\ref{sec_simulation} and~\ref{sec_realdata}, respectively.
We conclude the paper and discuss future directions in Section~\ref{sec_discussion}. Preliminaries, additional results on the U-statistic, and all proofs and technical lemmas are included in the Appendix.

\emph{Notations.} 
Given two datasets $\mcX_n := (\bX_1,\dots,\bX_n)$ and $\tilde{\mcX}_n := (\tilde \bX_1,\dots,\tilde \bX_n)$, 
we denote the Hamming distance between $\mcX_n$ and $\tilde{\mcX}_n$ by $d_{\ham}({\mcX}_n, \tilde{\mcX}_n) := \sum_{i=1}^n \mathbbm{1}(\bX_i \neq \tilde \bX_i)$.  
$\|v\|_p$ denotes the $p$-norm of the vector $v \in \mathbb{R}^r$.
For a natural number $n \in \mathbb{N}$, we use $[n]$ to denote the set $\{1,\dots,n\}$. Let $\bi_p^n$ denote the set of all $p$-tuples drawn without replacement from $[n]$, and let $\bj_p^n$ denote the $p$-fold Cartesian product of $[n]$. Let $\mathbf{\Pi}_n$ denote the set of all permutations of $[n]$. 
For two sequences of real numbers $a_n$ and $b_n$, we write $a_n \lesssim b_n$ (resp. $a_n \gtrsim b_n$) if there exists a positive constant $C > 0$ independent of $n$ such that $a_n \leq C b_n$ (resp. $a_n \geq C b_n$) for all $n \geq 1$. 
We write $a_n \asymp b_n$ if $a_n \lesssim b_n$ and $a_n \gtrsim b_n$. 
Denote $\xi_{\epsilon, \delta} := \epsilon + \log({1}/{(1-\delta)})$ for simplicity. Throughout the paper, a superscript $j \in [d]$ always denotes an index rather than an exponent.

\section{Joint Independence Test via dHSIC}\label{sec_dhsic_resampling}

\subsection{Non-private Statistic}
Kernel-based methods have gained increasing popularity in variable independence testing due to their remarkable ability to capture various types of dependencies, including non-monotonic and nonlinear relationships, as well as their flexibility in implementation. 
We refer the reader to Appendix~\ref{sec_embed} for the basic concepts of RKHS used in this paper. 
In this section, we briefly review dHSIC as proposed in \cite{Pfister2018KernelbasedTF}. 

The goal of dHSIC is to test whether the components of a random vector $\bX = (X^1,\dots,X^d)$ defined on $\mbX^1 \times \dots \times \mbX^d$ are jointly independent, which holds if and only if the joint distribution equals the product of the marginals.
Formally, the hypothesis of interest can be formulated as 
\begin{equation}\label{eq_30}
    H_0: P_{X^1,\dots, X^d} = \otimes_{j=1}^d P_{X^j} \quad \text{versus} \quad H_1: P_{X^1,\dots, X^d} \neq \otimes_{j=1}^d P_{X^j}.
\end{equation}

The central idea of dHSIC is to embed $P_{X^1,\dots, X^d}$ and $\otimes_{j=1}^d P_{X^j}$ into a proper RKHS and compare their discrepancy. Assume that the tensor product $\bk := \otimes_{j=1}^d k^j$ is characteristic, where $k^j: \mbX^j \times \mbX^j \to \mbR$ is a continuous, bounded, positive semi-definite kernel, and $\otimes_{j=1}^d k^j((x^1, x'^{1}),\dots, (x^d, x'^{d})) = \prod_{j=1}^d k^j(x^j, x'^{j})$ for all $x^j, x'^{j} \in \mbX^j$ and $j \in [d]$. Let $\mcH^j$ be the RKHS associated with $k^j$ and $\mcH_{\bk} = \mcH^1 \otimes \dots \otimes \mcH^d$ be the projective tensor product of the RKHSs $\mcH^j$. Denoting $\mcF_{\bk}$ as the unit ball in $\mcH_{\bk}$, dHSIC among the $d$ groups is defined as 
\begin{equation}\label{df_dhsic}
    \dHSIC_{\bk}(P_{X^1,\dots, X^d}) := \sup_{f \in \mcF_{\bk}} \big\{
    \mathbb{E}_{P_{X^1,\dots, X^d}}[f(X^1,\dots, X^d)] - \mathbb{E}_{\otimes_{j=1}^d P_{X^j}}[f(X^1,\dots, X^d)]
    \big\}.
\end{equation}
Here we define dHSIC in MMD form \citep{Gretton2012AKT} to facilitate later derivations. 
Given an i.i.d.\ sample $\mcX_n := (\bX_1,\dots,\bX_n)$, an empirical estimator for \eqref{df_dhsic} is given by
\begin{equation}\label{dhsic_empirical}
    \widehat{\dHSIC}(\mcX_n) := \sup_{f \in \mcF_{\bk}} \bigg\{
    \frac{1}{n} \sum_{i=1}^n f(X^1_i,\dots, X^d_i)
    - \frac{1}{n^d} \sum_{\bj^n_d} f(X^1_{i_1},\dots, X^d_{i_d})
    \bigg\},
\end{equation}
where $\sum_{\bj^n_d}$ abbreviates summation over all $(i_1,\dots,i_d) \in \bj^n_d$. 
Squaring both sides in \eqref{dhsic_empirical} yields the same closed form of the V-statistic in \cite{Pfister2018KernelbasedTF} as
\begin{equation}\label{eq_001}
    \widehat{\dHSIC}^2(\mcX_n) = \frac{1}{n^2} \sum_{\bj_2^n} \prod_{j=1}^d
k^j(X_{i_1}^j, X_{i_2}^j)  
+ \frac{1}{n^{2d}} \sum_{\bj_{2d}^n} \prod_{j=1}^d
k^j(X_{i_{2j-1}}^j, X_{i_{2j}}^j) 
- \frac{2}{n^{d+1}} \sum_{\bj_{d+1}^n} \prod_{j=1}^d
k^j(X_{i_1}^j, X_{i_{j+1}}^j).
\end{equation}

\subsection{Resampling Tests}
 
By the classical theory of V-statistics \citep[Chapter~5]{Serfling1980ApproximationTO}, the limiting distribution of \eqref{eq_001} is a complicated Gaussian chaos. 
To conduct tests, \cite{Pfister2018KernelbasedTF} used a gamma approximation and  two resampling methods (permutation and bootstrap) to obtain the critical value. 
They demonstrated that the bootstrap method achieves pointwise asymptotic level and pointwise power, while the permutation test enjoys valid level but its power consistency has not be established. Moreover, the gamma approximation has been supported only empirically, without theoretical guarantees. 
To facilitate our proposal, we review the non-private resampling test in this subsection.

Consider the test of $H_0: P \in \mathcal{P}_0$ versus $H_1: P \in \mathcal{P}_1$ in \eqref{eq_30} based on random observations $\mcX_n = (\bX_1,\dots,\bX_n)$. 
For $\bphi = (\varphi^1,\dots,\varphi^d)$ satisfying $\varphi^j: [n] \to [n]$ for all $j \in [d]$, define the resampled data as $\mcX^{\bphi}_n := (\bX^{\bphi}_1,\dots,\bX^{\bphi}_n)$, where $\bX^{\bphi}_i = (X^1_{\varphi^1(i)},\dots,X^d_{\varphi^d(i)})$.  
In particular, $\bphi$ yields permutation resampling when each $\varphi^j$ is bijective, and bootstrap resampling when the maps $\varphi^j$ are arbitrary.

Let $\bphi_1,\dots,\bphi_B$ be i.i.d.\ random resamplings of $[n]$. 
We denote by $T(\mcX_n)$ the statistic $\widehat{\dHSIC}(\mcX_n)$ defined in \eqref{dhsic_empirical}, and the resampled statistics are $T(\mcX^{\bphi_1}_n),\dots,T(\mcX^{\bphi_B}_n)$. 
Then the resampling $p$-value is given by
\begin{equation}\label{eq_p}
    \hat{p} := \frac{1}{B+1} \bigg\{
    \sum_{i=1}^B \mathbbm{1}(T(\mcX^{\bphi_i}_n) \geq T(\mcX_n)) + 1
    \bigg\}.
\end{equation}
Suppose that the resampling method has a group structure and that $\mcX^{\bphi}_n$ is equal in distribution to $\mcX_n$ (i.e., \emph{exchangeable}) under the null. Then the resampling $p$-value in \eqref{eq_p} has valid level \citep[Chapter~17]{Lehmann2022TestingSH}. This guarantees the validity of permutation tests under exchangeability, whereas the bootstrap does not generally admit the same argument because bootstrap resampling does not possess a group structure \citep{Pfister2018KernelbasedTF}.

\section{Differentially Private dHSIC}\label{sec_dpdhsic}

Before proceeding, due to space constraints, we refer the reader to Appendix~\ref{sec_DP} for the fundamental concepts and results on differential privacy, including its definition, sensitivity, privacy composition, and the Laplace mechanism.

\subsection{Differentially Private Resampling Tests}

The main challenge of DP resampling tests is that each released resampled statistic incurs an additional privacy cost, so repeated resampling can multiply the privacy cost. 
Formally, suppose that we have obtained the sensitivity of $T$ or its upper bound $\Delta_T$. If we directly add noise to each $T(\mathcal{X}^{\boldsymbol{\phi}_i}_n)$ in \eqref{eq_p} to obtain a DP $p$-value, the privacy budget must be allocated across the $B+1$ statistics by Lemma~\ref{lemma_com}.  
This leads to a severe loss of power when $B$ is large, as the added noise may dominate the signal.

Fortunately, since we only need to return whether we reject the null at level $\alpha$ or not as the testing result, it is unnecessary to obtain a DP $\hat p$ for an $\alpha$-level test.  
In other words, it is sufficient to guarantee the privacy of the test function $\mathbbm{1}(\hat p \leq \alpha)$ rather than the resampling $p$-value $\hat p$. 
Consequently, the undesirable dependence on $B$ is removed by representing the test function via the quantile, where both the statistic $T(\mathcal{X}_n)$ and the quantile of $T(\mathcal{X}^{\boldsymbol{\phi}_i}_n)$ have sensitivity $\Delta_T$. This representation eliminates the factor $B$ and introduces a factor $2$.

For each $i \in \{0\} \cup [B]$, define the private resampling statistic
\begin{equation}\label{eq_22}
M_i := T(\mathcal{X}^{\boldsymbol{\phi}_i}_n) + \frac{2\Delta_T}{\xi_{\epsilon,\delta}} \zeta_i,
\end{equation}
where $\boldsymbol{\phi}_0$ denotes the identity map and $\zeta_i$ are i.i.d.\ standard Laplace random variables generated independently of $\mathcal{X}_n$. 
Given $\{M_i\}_{i=0}^B$, define the private resampling $p$-value as
\begin{equation}\label{dp_p}
    \hat{p}_{\text{DP}} := \frac{1}{B+1} \left\{ \sum_{i=1}^B \mathbbm{1}(M_i \geq M_0) + 1 \right\}.
\end{equation}
The null hypothesis is rejected at level $\alpha$ if and only if $\hat{p}_{\text{DP}} \leq \alpha$. The full procedure is summarized in Algorithm~\ref{al_dp_per}.

\begin{algorithm}
\caption{Differentially Private Resampling Test}\label{al_dp_per}
    \renewcommand{\algorithmicrequire}{\textbf{Input:}}
    \renewcommand{\algorithmicensure}{\textbf{Output:}}
    \begin{algorithmic}[1]
        \REQUIRE Data $\mcX_n$, level $\alpha \in (0,1)$, privacy parameters $\epsilon > 0$ and $\delta \in [0,1)$, test statistic $T$, sensitivity (or its upper bound) $\Delta_T$, resampling method $\bphi$ and number $B \in \mathbb{N}$.  
        \ENSURE Reject $H_0$ under $\alpha$-level or not.  
        
        \FOR{$i \in [B]$}
            \STATE Generate a random sample $\bphi_i$ of $[n]$.
            \STATE Generate $\zeta_i \sim \Lap(0,1)$.
            \STATE Set $M_i \gets T(\mcX^{\bphi_i}_n) + 2\Delta_T \xi_{\epsilon, \delta}^{-1} \zeta_i$, where $\xi_{\epsilon, \delta} := \epsilon + \log( 1/(1 - \delta) )$.
        \ENDFOR
        \STATE Generate $\zeta_0 \sim \text{Laplace}(0,1)$ and set $M_0 \gets T(\mcX_n) + 2\Delta_T \xi_{\epsilon, \delta}^{-1} \zeta_0$.
        \STATE Compute the permutation $p$-value $\hat{p}_{\DP}$ as in \eqref{dp_p}.
        
        \RETURN  $\mathbbm{1}(\hat{p}_{\DP} \leq \alpha)$. 
    \end{algorithmic}
\end{algorithm}

We next present theoretical guarantees for the DP resampling tests. 
Lemma~\ref{lemma_dp_re} presents privacy, level, and pointwise power guarantees for Algorithm~\ref{al_dp_per}, and also identifies a condition for inconsistent power. 

\begin{lemma}\label{lemma_dp_re}
For any $ \alpha \in (0,1) $ and $ B,n \geq 1 $, the resampling test $ \mathbbm{1}(\hat{p}_{\DP} \leq \alpha) $ from Algorithm~\ref{al_dp_per} has the following properties: \\
(i) (Privacy Guarantee) For any $\alpha \in (0,1)$, it is $(\epsilon, \delta)$-DP.\\
(ii) (Validity) Assume that $\mcX^{\bphi_i}_n$ has a group structure and $\mcX_n$ is exchangeable under $H_0$. Then the type I error satisfies
\begin{equation*}
    \sup_{P \in \mathcal{P}_0} \mathbb{P}_P({\hat{p}}_{\DP} \leq \alpha) = \frac{\lfloor (B + 1) \alpha \rfloor}{B + 1} \leq \alpha.
\end{equation*}\\
(iii) (Power) Suppose that $B_n$ is a positive sequence such that $\min_{n \geq 1} B_n > \alpha^{-1} - 1$ and $P$ is a given alternative. If $\lim_{n \to \infty} \mbP_P(M_0 \leq M_1) = 0$, then the DP resampling test is consistent in power as $\lim_{n \to \infty} \mathbb{P}_P(\hat{p}_{\DP} \leq \alpha) = 1$. If $\lim_{n \to \infty} \mbP_P(M_0 \leq M_1) > \alpha$, then the test is inconsistent against $P$, i.e., $\limsup_{n \to \infty} \mathbb{P}_P(\hat{p}_{\DP} \leq \alpha) <1$. 
\end{lemma}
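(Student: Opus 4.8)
The plan is to prove the three assertions separately, reducing (i) to the $(\epsilon,\delta)$-Laplace mechanism of Lemma~\ref{Laplace} through a threshold representation of the test, (ii) to the classical randomization argument for exchangeable statistics, and (iii) to two applications of Markov's inequality to the comparison count $U:=\sum_{i=1}^B\mathbbm{1}(M_i\ge M_0)$. For the privacy guarantee, I would condition on the resampling draws $\bphi_1,\dots,\bphi_B$ (which play the role of the auxiliary variable $w$ and are independent of $\mcX_n$) and on the noises $\zeta_1,\dots,\zeta_B$. The first step is to rewrite the rejection event $\{\hat p_{\DP}\le\alpha\}=\{U\le m-1\}$, with $m:=\lfloor(B+1)\alpha\rfloor$, as $\{M_0>\hat c\}$, where $\hat c$ is the $(B-m+1)$-th smallest of $\{M_1,\dots,M_B\}$; equivalently $\{\zeta_0>D/c\}$ with $c=2\Delta_T/\xi_{\epsilon,\delta}$ and gap $D:=\hat c-T(\mcX_n)$. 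The key observation is that, for fixed $w$ and noises, replacing $\mcX_n$ by an adjacent $\tilde\mcX_n$ shifts $T(\mcX_n)$ by at most $\Delta_T$ and, since an order statistic is $1$-Lipschitz under a uniform $\ell_\infty$ perturbation of its arguments, shifts $\hat c$ by at most $\Delta_T$ as well; hence $D$ has data-sensitivity at most $2\Delta_T$, which is precisely the source of the factor $2$ in \eqref{eq_22}. Because the reported bit is the thresholding of $D$ perturbed by $c\,\zeta_0$, Lemma~\ref{Laplace} applied with sensitivity $2\Delta_T$ together with post-processing yields the conditional $(\epsilon,\delta)$ bound for both $S=\{0\}$ and $S=\{1\}$; averaging over $\zeta_1,\dots,\zeta_B$ (a mixture preserving the inequality) and noting the bound holds for every $w$ completes the argument. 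The only property used beyond the permutation case is $\|T(\mcX_n^{\bphi})-T(\tilde\mcX_n^{\bphi})\|\le\Delta_T$ for every resampling map, which is built into the auxiliary-variable sensitivity; this is what licenses the generalization of \cite{Kim2023DPP}.

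For validity, under $H_0$ the group structure and exchangeability of $\mcX_n$ give that $(T(\mcX_n),T(\mcX_n^{\bphi_1}),\dots,T(\mcX_n^{\bphi_B}))$ is exchangeable, and adding the i.i.d.\ noises $\zeta_0,\dots,\zeta_B$ preserves exchangeability of $(M_0,M_1,\dots,M_B)$. Because the Laplace noise is continuous, ties occur with probability zero, so $U+1$, the number of indices $j\in\{0,\dots,B\}$ with $M_j\ge M_0$, is uniform on $\{1,\dots,B+1\}$. The standard randomization computation (as in \citealp[Chapter~17]{Lehmann2022TestingSH}) then gives $\mathbb{P}_P(\hat p_{\DP}\le\alpha)=\mathbb{P}_P(U+1\le\lfloor(B+1)\alpha\rfloor)=\lfloor(B+1)\alpha\rfloor/(B+1)\le\alpha$ for every $P\in\mcP_0$.

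For consistency I would again use $U$ and the integer threshold $m$, noting $\{\hat p_{\DP}\le\alpha\}=\{U\le m-1\}$ and, by identical distribution of the pairs $(M_0,M_i)$, $\mathbb{E}[U]=B\,\mathbb{P}_P(M_0\le M_1)$; the condition $\min_n B_n>\alpha^{-1}-1$ ensures $m\ge1$. If $\mathbb{P}_P(M_0\le M_1)\to0$, then Markov's inequality gives $\mathbb{P}_P(U\ge m)\le B\,\mathbb{P}_P(M_0\le M_1)/m\to0$ (the factor $B/m$ stays bounded), hence the power tends to $1$. If instead $\liminf_n\mathbb{P}_P(M_0\le M_1)=\beta>\alpha$, I would bound the complementary count: Markov applied to $B-U$ gives $\mathbb{P}_P(U<m)=\mathbb{P}_P(B-U\ge B-m+1)\le B(1-\beta_n)/(B-m+1)$ with $\beta_n:=\mathbb{P}_P(M_0\le M_1)$, whose $\limsup$ is at most $(1-\beta)/(1-\alpha)<1$; consequently $\liminf_n\mathbb{P}_P(\hat p_{\DP}>\alpha)\ge(\beta-\alpha)/(1-\alpha)>0$, so the test is inconsistent against $P$.

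I expect the main obstacle to be the privacy step, specifically the order-statistic representation that simultaneously removes the dependence on $B$ and isolates the factor-$2$ sensitivity of the gap $D$; once this representation and its $1$-Lipschitz property are in hand, the remaining claims reduce to the Laplace mechanism, a classical exchangeability argument, and two elementary moment bounds.
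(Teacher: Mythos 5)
Your proposal is correct, and it splits naturally into two categories. For parts (i) and (ii) you follow essentially the paper's own route: your order-statistic representation $\{\hat p_{\DP}\le\alpha\}=\{M_0>\hat c\}$ with the $1$-Lipschitz property of order statistics under $\ell_\infty$ perturbations is exactly the paper's combination of its quantile-representation lemmas and its quantile-sensitivity lemma (giving the gap sensitivity $2\Delta_T$), followed by the Laplace mechanism and post-processing; and your rank-uniformity argument for validity is the standard computation the paper imports from \citet[Proposition~B.4]{Pfister2018KernelbasedTF}. The only detail you leave implicit is the degenerate case $\lfloor(B+1)\alpha\rfloor=0$, where $\hat c$ is undefined but the test deterministically accepts and is trivially private (the paper absorbs this into the indicator $\mathbbm{1}(\alpha\ge 1/(B+1))$ of its alternative-expression lemma). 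For part (iii), however, you take a genuinely different and more elementary route. The paper cites \citet[Lemma~8]{Kim2023DPP} for consistency and proves inconsistency by conditioning on $\mcX_n$, exploiting that the indicators $\mathbbm{1}(M_0\le M_i)$ are conditionally i.i.d., applying a binomial Chernoff bound on the event $\mcA_n=\{\mbP(M_0\le M_1\mid\mcX_n)>(\alpha+p)/2\}$, and integrating over $\mcA_n$ and its complement. You instead work unconditionally: Markov on $U=\sum_{i=1}^B\mathbbm{1}(M_i\ge M_0)$ gives consistency (your bound $B/m\le 2/\alpha$ is correct since $(B+1)\alpha>1$), and Markov on $B-U$ gives
$$
\mbP_P(U\le m-1)\ \le\ \frac{B(1-\beta_n)}{B-m+1}\ \le\ \frac{1-\beta_n}{1-\alpha},
$$
using $B-m+1\ge(B+1)(1-\alpha)$, so $\limsup_n\mbP_P(\hat p_{\DP}\le\alpha)\le(1-\beta)/(1-\alpha)<1$. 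Markov needs no independence, only $\mbE[U]=B\,\mbP_P(M_0\le M_1)$ from the identical distribution of the pairs $(M_0,M_i)$, so this is valid. What each approach buys: yours is self-contained (no external lemma for consistency), shorter, and yields the clean explicit bound $(1-\beta)/(1-\alpha)$ uniformly over the behavior of $B_n$; the paper's conditional Chernoff argument is heavier but exploits the conditional binomial structure, giving a rejection bound that is exponentially small in $\min_n B_n$ on the good event, which is sharper when $B$ is large — though for the qualitative claim $\limsup<1$ both suffice.
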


As illustrated earlier, the indicator function $\mathbbm{1}(\hat p \leq \alpha)$ can be represented by the quantile, where both the statistic $T(\mathcal{X}^{\boldsymbol{\phi}_i}_n)$ and the quantile have sensitivity $\Delta_T$. Together with the factor $2$ in \eqref{eq_22}, the algorithm is $(\epsilon, \delta)$-DP.  

Regarding the testing level, methods such as permutation inherently possess an algebraic group structure, which ensures their validity non-asymptotically. 
For testing power, it suffices to show that $M_0 > M_1$ to establish pointwise consistency. In many cases, including our dpdHSIC, the noise strength approaches $0$ as $n \to \infty$. Thus, proving consistency under privacy constraints typically reduces to showing $T(\mathcal{X}_n) > T(\mathcal{X}_n^{\boldsymbol{\phi}})$ in the non-private regime.

\subsection{The Private Permutation dHSIC}

In this subsection, we focus on the DP joint independence testing based on dHSIC with permutation and Algorithm~\ref{al_dp_per}. 
To this end, we first derive the sensitivity of the permutation dHSIC. 

\begin{proposition}[Sensitivity of Empirical Permutation $\dHSIC$] \label{sen_dhsic}
    Assume that the kernels $k^j$  are bounded as $0\leq k^j(x,x')\leq K^j$ for all $x,x'\in \mbX^j$ and $j\in[d]$. Then the sensitivity of the empirical permutation dHSIC satisfies
    \begin{equation}\label{eq_sensi}
       \sup_{\bpi\in\bPi_n} \sup_ {\substack{\mcX_n, \widetilde{\mcX}_n:\\ d_{{\ham}}(\mcX, \widetilde{\mcX})\leq1}}
        |\widehat{\dHSIC}(\mcX^{\bpi}_n)-\widehat{\dHSIC}(\widetilde{\mcX}^{\bpi}_n)| \leq \frac{2d}{n}\bigg(\prod_{j=1}^dK^j\bigg)^{1/2}.
    \end{equation}
\end{proposition}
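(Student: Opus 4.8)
The plan is to work with the MMD (mean-embedding) form of dHSIC rather than the $V$-statistic closed form, since the square root makes the latter awkward for a sensitivity bound. Write $\widehat{\dHSIC}(\mcX_n)=\|\hat\mu_P-\hat\mu_Q\|_{\mcH_{\bk}}$, where $\hat\mu_P=\frac1n\sum_{i=1}^n\otimes_{j=1}^d k^j(X^j_i,\cdot)$ embeds the joint empirical measure and $\hat\mu_Q=\otimes_{j=1}^d\big(\frac1n\sum_{i=1}^n k^j(X^j_i,\cdot)\big)$ is the tensor product of the marginal embeddings $\hat\nu^j$. Because a permutation only reorders summands, each $\hat\nu^j$ — and hence $\hat\mu_Q$ — is invariant under $\bpi$, and the permuted sample $\mcX^{\bpi}_n$ differs from $\tilde\mcX^{\bpi}_n$ in a single point; so it suffices to bound how $\|\hat\mu_P-\hat\mu_Q\|$ reacts to replacing one point. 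By the reverse triangle inequality $|\,\|u\|-\|\tilde u\|\,|\le\|u-\tilde u\|$, this reduces to controlling $\|u-\tilde u\|$ for the witness vectors $u,\tilde u=\hat\mu_P-\hat\mu_Q$.

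Next I would run a hybrid argument across the $d$ coordinates: interpolate between the two samples by switching the coordinates $X^1_{i_0},\dots,X^d_{i_0}$ of the altered point one at a time, obtaining $\mcX^{(0)},\dots,\mcX^{(d)}$, and bound $\|u-\tilde u\|\le\sum_{j=1}^d\|u^{(j-1)}-u^{(j)}\|$. Switching only coordinate $j$ perturbs a single summand of $\hat\mu_P$ and shifts only $\hat\nu^j$, in both cases altering only the $j$-th tensor slot by $\delta^j:=k^j(X^j_{i_0},\cdot)-k^j(\tilde X^j_{i_0},\cdot)$. Consequently $\delta^j$ factors out: $u^{(j-1)}-u^{(j)}=\frac1n\,w_j\otimes\delta^j$ (after reordering slots), where $w_j$ is the difference between the tensor $\otimes_{l\neq j}k^l(\cdot,\cdot)$ of single-point embeddings appearing in the joint term and the tensor $\otimes_{l\neq j}\hat\nu^l$ of marginal embeddings. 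Hence $\|u^{(j-1)}-u^{(j)}\|=\frac1n\|w_j\|\,\|\delta^j\|$.

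The crux, and the step I expect to be the main obstacle, is extracting the sharp constant. A crude triangle-inequality estimate gives $\|\delta^j\|\le 2\sqrt{K^j}$ and $\|w_j\|\le 2\prod_{l\neq j}\sqrt{K^l}$, leading to a constant of order $4d$ — too large; being slightly more careful still only yields $2\sqrt2\,d$. The gain comes from using $0\le k^j\le K^j$: expanding the squared norms and discarding the cross terms, which are non-negative exactly because the kernels are non-negative, gives $\|\delta^j\|^2=k^j(X^j_{i_0},X^j_{i_0})-2k^j(X^j_{i_0},\tilde X^j_{i_0})+k^j(\tilde X^j_{i_0},\tilde X^j_{i_0})\le 2K^j$ and, likewise, $\|w_j\|^2\le 2\prod_{l\neq j}K^l$, the latter after dropping the inner-product term $\prod_{l\neq j}\langle k^l(\cdot,\cdot),\hat\nu^l\rangle\ge0$ and using $\|\hat\nu^l\|^2\le K^l$ and $\|k^l(x,\cdot)\|^2\le K^l$. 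The two factors of $\sqrt2$ then combine to give exactly $\|u^{(j-1)}-u^{(j)}\|\le\frac2n(\prod_{l}K^l)^{1/2}$.

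Summing the $d$ single-coordinate bounds yields $\frac{2d}{n}(\prod_{j}K^j)^{1/2}$, and since nothing in the argument depends on the particular $\bpi$ or on which point is altered, taking suprema finishes the proof. The points needing care are the tensor-slot bookkeeping when factoring out $\delta^j$, the elementary norm bounds $\|\hat\nu^l\|^2\le K^l$ and $\|k^l(x,\cdot)\|^2\le K^l$, and checking that a single-coordinate switch perturbs exactly one summand of the joint embedding, which is where the resampling map acting as a bijection on $[n]$ enters.
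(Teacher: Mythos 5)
Your proposal is correct and follows essentially the same route as the paper's proof: both work in the MMD/embedding form, exploit the fact that the product-of-marginals term is permutation invariant, decompose the single-point change coordinate-by-coordinate into $d$ terms each of the form $\frac{1}{n}\,\delta^j\otimes w_j$, and obtain the constant $2$ per term by expanding squared RKHS norms and dropping the non-negative cross terms (giving $\|\delta^j\|\le\sqrt{2K^j}$ and $\|w_j\|\le\sqrt{2\prod_{l\neq j}K^l}$), exactly as in the paper's final Cauchy--Schwarz step. The only difference is presentational: your hybrid interpolation across coordinates replaces the paper's explicit algebraic expansions of the two embedding terms (an induction for the marginal product and a combinatorial bookkeeping argument, via the sets $C_1,\dots,C_{d'}$, for the permuted joint term), but it yields the identical decomposition and constants.
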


The bound assumption holds for a wide class of kernels including the popular Gaussian kernel $k(x,y) = e^{-\nu\|x-y\|^2_2}$ and the Laplacian kernel $k(x, y)=e^{-\nu\|x-y\|_1}$ for $\nu>0$, which are naturally bounded as $0\leq k(x,y)\leq 1$. 
Some kernels, such as the linear and polynomial kernels, are unbounded on their natural domains but may become bounded on restricted domains. For example, in single nucleotide polymorphism association studies, $x\in\{0,1,2\}$, so the linear kernel is bounded between $0$ and $4$.  

When $d=2$, our sensitivity bound differs from that of dpHSIC \citep{Kim2026DPP} only up to $O(n^{-2})$ terms. Although the bound can be slightly sharpened through case-specific refinements, the improvement is negligible, so we use the simpler form throughout. We also show in Appendix~\ref{app_pr_sen} that, when the kernels $k^j$ are translation invariant and have non-empty level sets on $\mbX^j$, the order of the sensitivity cannot be improved, i.e., it is bounded below by a constant multiple of $1/n$. 


Set $\Delta_T=2d\sqrt{K_0}/n$ with $K_0:=\prod_{j=1}^dK^j$ and apply Algorithm~\ref{al_dp_per} with the empirical dHSIC in \eqref{dhsic_empirical} as the test statistic. We refer to the resulting test as the dpdHSIC test and denote its output by $\phi_{\dpdHSIC}$. Its privacy guarantees and level control follow immediately from Lemma~\ref{lemma_dp_re}, while pointwise power requires additional efforts. 
We formulate the three points as Theorem~\ref{property_dpdH}. 

\begin{theorem}[Properties of dpdHSIC Test]\label{property_dpdH}
Let $\alpha\in(0,1)$ be a fixed constant and assume that the kernels $k^j$  are bounded as $0\leq k^j(x,x')\leq K^j$ for all $x,x'\in \mbX^j$ and $j\in[d]$. Then $\phi_{\dpdHSIC}$ satisfies \\
(i) (Privacy Guarantee) For $\epsilon > 0$ and $\delta \in [0, 1)$, $\phi_{\dpdHSIC}$ is $(\epsilon, \delta)$-DP.\\
(ii) (Validity) The type I error of $\phi_{\dpdHSIC}$ is controlled at level $\alpha$ non-asymptotically. \\
(iii) (Pointwise Consistency) Assume that $n^{-1} \xi_{\epsilon, \delta}^{-1} \to 0$ as $n \to \infty$. Then for any sequence $B_n$ such that $\min_{n \geq 1} B_n > \alpha^{-1} - 1$, we have
$\lim_{n \to \infty} \mbE_{P_{X^1,\dots, X^d}}[\phi_{\dpdHSIC}] = 1$ under a fixed alternative. 
\end{theorem}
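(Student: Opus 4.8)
The plan is to dispatch the first two claims by invoking Lemma~\ref{lemma_dp_re} directly, and to devote the bulk of the argument to the pointwise consistency in (iii). Observe first that $\phi_{\dpdHSIC}$ is precisely the output of Algorithm~\ref{al_dp_per} run with test statistic $T=\widehat{\dHSIC}$, the componentwise permutation as the resampling map $\bphi$, and sensitivity parameter $\Delta_T=2d\sqrt{K_0}/n$, which is a valid upper bound on the sensitivity of the permuted empirical dHSIC by Proposition~\ref{sen_dhsic}. Hence part (i) follows verbatim from Lemma~\ref{lemma_dp_re}(i). For part (ii), under $H_0$ the joint law equals the product of the marginals, so a sample with i.i.d.\ rows and independent coordinates remains equal in distribution after permuting each coordinate independently; the exchangeability and group-structure hypotheses of Lemma~\ref{lemma_dp_re}(ii) are therefore met, giving non-asymptotic level control at $\alpha$.

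For part (iii), fix an alternative $P=P_{X^1,\dots,X^d}\neq\otimes_{j=1}^d P_{X^j}$. By Lemma~\ref{lemma_dp_re}(iii) it suffices to show $\mbP_P(M_0\leq M_1)\to 0$. Writing $c_n:=2\Delta_T\xi_{\epsilon,\delta}^{-1}=4d\sqrt{K_0}\,n^{-1}\xi_{\epsilon,\delta}^{-1}$, the defining relation \eqref{eq_22} gives $M_0-M_1=\big[\widehat{\dHSIC}(\mcX_n)-\widehat{\dHSIC}(\mcX_n^{\bphi_1})\big]+c_n(\zeta_0-\zeta_1)$. The noise contribution vanishes: since $\zeta_0,\zeta_1$ are fixed standard Laplace variables and the assumption $n^{-1}\xi_{\epsilon,\delta}^{-1}\to 0$ forces $c_n\to 0$, the term $c_n(\zeta_0-\zeta_1)$ tends to $0$ in probability. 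It therefore remains to prove that the statistic gap converges in probability to a strictly positive constant, which is exactly the non-private power assertion anticipated in the remarks following Lemma~\ref{lemma_dp_re}.

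The two pieces of the gap are treated separately. Because the tensor-product kernel $\bk$ is characteristic and $P$ differs from $\otimes_{j=1}^d P_{X^j}$, the population quantity $\dHSIC_{\bk}(P)$ equals a strictly positive constant $c$. I would then invoke consistency of the V-statistic estimator, viewing $\widehat{\dHSIC}^2(\mcX_n)$ as a continuous functional of empirical mean embeddings that converge to their population counterparts, so that a law-of-large-numbers argument for the constituent V-statistics yields $\widehat{\dHSIC}(\mcX_n)\xrightarrow{P}c$. The delicate half is the permuted term, for which I claim $\widehat{\dHSIC}(\mcX_n^{\bphi_1})\xrightarrow{P}0$. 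Since the V-statistic form of squared dHSIC is nonnegative, being a squared RKHS norm $\|\widehat\mu_{\text{joint}}-\widehat\mu_{\text{prod}}\|^2$, it suffices by Markov's inequality to control the first moment $\mbE\big[\widehat{\dHSIC}^2(\mcX_n^{\bphi_1})\big]$, the expectation taken over both $\mcX_n\sim P^n$ and the uniform permutation $\bphi_1$. Taking the permutation expectation of each of the three sums in the closed form of $\widehat{\dHSIC}^2$, the independent coordinatewise shuffling decouples the coordinates so that the leading contributions cancel against the dHSIC of $\otimes_{j=1}^d P_{X^j}$, which is $0$, leaving a remainder of vanishing order. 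Combining the two pieces, $\widehat{\dHSIC}(\mcX_n)-\widehat{\dHSIC}(\mcX_n^{\bphi_1})\xrightarrow{P}c>0$, so $\mbP_P(M_0\leq M_1)=\mbP_P(M_0-M_1\leq 0)\to 0$ and Lemma~\ref{lemma_dp_re}(iii) delivers $\lim_{n\to\infty}\mbE_P[\phi_{\dpdHSIC}]=1$.

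The main obstacle is the permuted-statistic convergence $\widehat{\dHSIC}(\mcX_n^{\bphi_1})\xrightarrow{P}0$, which is exactly the gap left open in \cite{Pfister2018KernelbasedTF}. Unlike the unpermuted estimator, the permuted V-statistic mixes independently shuffled coordinates, so its moments involve permutation averages over the product index sets $\bj_2^n,\bj_{d+1}^n,\bj_{2d}^n$ that must be expanded combinatorially and matched term-by-term. I expect to track which index coincidences survive the permutation average and to verify that the residual discrepancy between the joint and product embeddings is $O(1/n)$ for the fixed alternative $P$; this bookkeeping is a non-trivial extension of the permuted U- and V-statistic moment computations of \cite{Kim2022MinimaxOO}.
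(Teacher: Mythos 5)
Your proposal is correct, and parts (i)--(ii) plus the reduction of (iii) via Lemma~\ref{lemma_dp_re}(iii) and the vanishing noise term $c_n(\zeta_0-\zeta_1)=o_P(1)$ match the paper exactly. Where you genuinely diverge is the key step $\widehat{\dHSIC}(\mcX_n^{\bphi_1})\xrightarrow{P}0$: the paper proves an exponential concentration inequality for the permuted V-statistic (Lemma~\ref{le_dhsic_con_per}), obtained from a concentration inequality for the permuted U-statistic (Lemma~\ref{le_u_dhsic_con_per}) via $d$ groups of Rademacher symmetrization and a Hanson--Wright argument, whereas you use a first-moment bound plus Markov's inequality, exploiting that the squared V-statistic is a nonnegative squared RKHS norm. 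Your route does work, and the bookkeeping you flag as the main obstacle closes cleanly: conditionally on $\mcX_n$, the permutation expectation of each of the three sums factorizes over coordinates (the $\varphi^j$ are independent), the second and third sums have \emph{identical} conditional expectations $\prod_{j=1}^d\{\tfrac{n-1}{n}U_j+\tfrac{1}{n}V_j\}$ with $U_j:=\frac{1}{n(n-1)}\sum_{\bi_2^n}k^j(X^j_a,X^j_b)$ and $V_j:=\frac{1}{n}\sum_a k^j(X^j_a,X^j_a)$, and the first sum equals $\tfrac{n-1}{n}\prod_j U_j+\tfrac{1}{n}\prod_j V_j$, so $\mbE_{\bpi}[\widehat{\dHSIC}^2(\mcX_n^{\bpi})\mid\mcX_n]\leq C_d K_0/n$ almost surely, and Markov finishes the argument (note the cancellation needs no data expectation at all, sidestepping the dependence among the $U_j$ under the alternative). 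What the two approaches buy is different: your argument is elementary and fully sufficient for pointwise consistency against a fixed alternative, but it yields only a polynomial-in-$n$ bound in probability; the paper's heavier concentration lemmas give exponential tails, which is precisely what is recycled to prove the uniform-power and minimax-separation results (Theorems~\ref{th_mini_sepa_dpdhsic} and \ref{th_L2separation}), so the machinery there cannot be replaced by your first-moment argument. The same comment applies to the unpermuted statistic, where your LLN-style consistency suffices for (iii) but the paper's Lemma~\ref{le_ineq_e_dhsic} provides the quantitative tail bound used later.
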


We briefly outline the proof of the pointwise consistency result and highlight the main difficulty. By definition of dpdHSIC, the added noise vanishes in probability whenever $n^{-1}\xi_{\epsilon,\delta}^{-1}\to0$. Hence, consistency of the private test essentially reduces to showing that the non-private permutation dHSIC test is pointwise consistent, namely that $\hat\dHSIC(\mcX_n)$ dominates $\hat\dHSIC(\mcX_n^{\bpi})$ under fixed alternatives.
This step is non-trivial and was left open in \citet[Remark~2]{Pfister2018KernelbasedTF}. 
To address it, Lemma~\ref{le_ineq_e_dhsic} in Appendix~\ref{app_le_dhsic} establishes an exponential inequality showing that the empirical dHSIC converges to its population counterpart. 
It remains to show that the permuted statistic converges to zero. A related argument for dpHSIC was developed by \citet{Kim2026DPP}, relying on a concentration inequality for a second-order permuted U-statistic \citep[Theorem~6.2]{Kim2022MinimaxOO}. 
The dHSIC statistic is instead a high-order V-statistic, so this second-order U-statistic inequality cannot be applied directly. 
One of our theoretical contributions is to develop techniques for controlling high-order permuted U-statistics and transferring the resulting bounds to their V-statistic counterparts. 
These techniques yield concentration inequalities for permuted dHSIC, stated as Lemmas~\ref{le_u_dhsic_con_per} and~\ref{le_dhsic_con_per} in Appendix~\ref{app_le_dhsic}. 

\subsection{Inconsistent Power of Bootstrap dHSIC}
Let $\bb$ denote a bootstrap resampling map. 
Although the non-private bootstrap dHSIC test has pointwise asymptotic validity and consistency \citep{Pfister2018KernelbasedTF}, its DP counterpart suffers from inconsistent power even for fixed alternatives. 
To see this, we first show that the sensitivity of bootstrap dHSIC does not vanish with $n$, as stated in the following proposition.

\begin{proposition}[Sensitivity of Empirical Bootstrap dHSIC]\label{prop_bootstrap_sen}
Consider $d\geq2$ and assume that the kernels $k^j$ are translation invariant and have non-empty level sets on $\mbX^j$ for $j\in[d]$. The sensitivity of bootstrap dHSIC satisfies
\begin{equation*}
    \Delta_T^{\bb}:=\sup_{\bb}\sup_{\substack{\mcX_n,\tilde{\mcX}_n:\\ d_{\ham}(\mcX_n,\tilde{\mcX}_n)\leq1}}
    |\hat\dHSIC(\mcX_n^{\bb})-\hat\dHSIC(\tilde{\mcX}_n^{\bb})|
    \geq \frac{3}{8}\sqrt{K_0},
\end{equation*}
where $\sup_{\bb}$ is taken over all bootstrap resamplings.
\end{proposition}

Proposition~\ref{prop_bootstrap_sen} shows that the sensitivity of bootstrap dHSIC is of constant order, in contrast to the $O(n^{-1})$ sensitivity of the permutation dHSIC in Proposition~\ref{sen_dhsic}. 
The DP bootstrap dHSIC test is based on Algorithm~\ref{al_dp_per} with sensitivity $\Delta_T^{\bb}$. 
Consequently, whenever $\xi_{\epsilon,\delta}$ is fixed, the privacy noise in \eqref{eq_22} remains of constant order and may dominate the signal even under fixed alternatives. 
This leads to the following power inconsistency result. 

\begin{proposition}[Power Inconsistency of DP Bootstrap dHSIC]\label{prop_bootstrap_inconsistency}
    Suppose that $\xi_{\epsilon,\delta}$ is fixed and $B_n$ satisfies $\min_{n\geq1}B_n>\alpha^{-1}-1$. Under the assumptions of Proposition~\ref{prop_bootstrap_sen} and $\alpha<1/2$, the DP bootstrap dHSIC test has inconsistent power against some fixed alternatives. Moreover, the DP bootstrap dHSIC test has inconsistent power against all alternatives when $\alpha<C_{\epsilon,\delta}$, where $C_{\epsilon,\delta}$ is some constant that depends only on $\epsilon$ and $\delta$. 
\end{proposition}

Although bootstrap dHSIC is effective in non-private settings, its DP version is compromised by a non-vanishing sensitivity and the consequent unavoidable noise. This limitation is particular to bootstrap dHSIC and does not rule out the use of bootstrap methods for other kernel-based tests. For example, bootstrap MMD can attain the same order of sensitivity as permutation MMD \citep[Lemma~2]{Kim2026DPP}.

\section{Uniform Power and Minimax Optimality}\label{sec_power_minimax}

In the previous section, we established the asymptotic power of the DP permutation test $\dpdHSIC$ against fixed alternatives. It is also important to obtain uniform power guarantees over sequences of alternatives. In this section, we study uniform power under the dHSIC metric induced by the kernel $\bk$ and under the $L_2$ norm for smooth alternatives. 

\subsection{Uniform Power in dHSIC Metric}\label{sec_powerdhsic}

We first characterize the minimum separation required for $\phi_{\dpdHSIC}$ in the dHSIC metric. Let $\mcP_{\mbX^1\times\dots\times\mbX^d}$ denote the class of distributions on $\mbX^1\times\dots\times\mbX^d$. For $\rho>0$, define the alternative class
$$
    \mcP_{\dHSIC_{\bk}}(\rho) := \bigg\{P_{X^1,\dots, X^d} \in \mcP_{\mbX^1\times\dots\times\mbX^d}: \dHSIC_{\bk}(P_{X^1,\dots, X^d}) \geq \rho \bigg\}.
$$
For a target type II error level $\beta\in(0,1-\alpha)$, the minimum separation of the dpdHSIC test against $\mcP_{\dHSIC_{\bk}}(\rho)$ is defined as
\begin{equation}
    \rho_{\phi_{\dpdHSIC}}(\alpha, \beta, \epsilon, \delta, n) := \inf \bigg\{ \rho > 0 : \sup_{P \in \mcP_{\dHSIC_{\bk}}(\rho)} \mbE_{P} [ 1 - \phi_{\dpdHSIC} ] \leq \beta \bigg\}.
\end{equation}

The quantity $\rho_{\phi_{\dpdHSIC}}$ is the smallest dHSIC signal strength for which the dpdHSIC test detects any alternative in the class with probability at least $1-\beta$. In other words, the test has power at least $1-\beta$ uniformly over all alternatives whose dHSIC signal exceeds this threshold. The following theorem provides an upper bound for $\rho_{\phi_{\dpdHSIC}}$ as a function of the parameters $\alpha$, $\beta$, $\epsilon$, $\delta$ and $n$. 

\begin{theorem}[Minimum Separation of $\phi_{\dpdHSIC}$]
\label{th_mini_sepa_dpdhsic}
    Assume that the kernels $k^j$ are bounded as $0\leq k^j(x,x')\leq K^j$ for all $x,x'\in \mbX^j$ and $j\in[d]$.
    For any $\alpha \in (0,1)$, $\beta \in (0, 1-\alpha)$, $\epsilon > 0$, $\delta \in [0, 1]$ and $B \geq 6\alpha^{-1} \log(2\beta^{-1})$, the minimum separation for $\phi_{\dpdHSIC}$ satisfies
$$
\rho_{\phi_{\dpdHSIC}}(\alpha, \beta, \epsilon, \delta, n) \leq C_{K} \max \bigg\{ \sqrt{\frac{\max \{ \log(1/\alpha), \log(1/\beta) \}}{n}}, \frac{\max \{ \log(1/\alpha), \log(1/\beta) \}}{n \xi_{\epsilon,\delta}} \bigg\},
$$
where $C_{K}$ is a positive constant that only depends on $K^j$ for $j\in[d]$.
\end{theorem}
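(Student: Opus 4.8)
The plan is to bound the Type II error $\mbE_P[1-\phi_{\dpdHSIC}] = \mbP_P(\hat{p}_{\DP} > \alpha)$ uniformly over $P \in \mcP_{\dHSIC_{\bk}}(\rho)$ and then read off the smallest admissible $\rho$. Write $T = \widehat{\dHSIC}$, recall $\Delta_T = 2d\sqrt{K_0}/n$ from Proposition~\ref{sen_dhsic}, and note that $\phi_{\dpdHSIC}$ rejects exactly when $\sum_{i=1}^B \mathbbm{1}(M_i \geq M_0) \leq (B+1)\alpha - 1$. Introducing a deterministic threshold $c$, I would use that failing to reject forces either $M_0 \leq c$ or an excess of exceedances above $c$, giving
$$
\mbP_P(\hat{p}_{\DP} > \alpha) \leq \mbP_P(M_0 \leq c) + \mbP_P\Big( \sum_{i=1}^B \mathbbm{1}(M_i > c) > (B+1)\alpha - 1 \Big),
$$
because $M_0 > c$ makes $\{M_i \geq M_0\} \subseteq \{M_i > c\}$. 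The two terms are then handled separately, and $c$ is chosen to make both small.

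For the first term I would split $M_0 = T(\mcX_n) + 2\Delta_T \xi_{\epsilon,\delta}^{-1}\zeta_0$ into its statistic and noise parts. The exponential inequality for the empirical dHSIC (Lemma~\ref{le_ineq_e_dhsic}) yields, with probability at least $1 - \beta/4$ and uniformly over the alternative class, $T(\mcX_n) \geq \dHSIC_{\bk}(P) - C_1\sqrt{\log(1/\beta)/n} \geq \rho - C_1\sqrt{\log(1/\beta)/n}$, while the lower Laplace tail gives $2\Delta_T\xi_{\epsilon,\delta}^{-1}\zeta_0 \geq -C_2\log(1/\beta)/(n\xi_{\epsilon,\delta})$ with probability at least $1 - \beta/4$. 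Hence $\mbP_P(M_0 \leq c) \leq \beta/2$ whenever $c \leq \rho - C_1\sqrt{\log(1/\beta)/n} - C_2\log(1/\beta)/(n\xi_{\epsilon,\delta})$.

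For the second term, conditioning on $\mcX_n$ makes the pairs $(\bphi_i,\zeta_i)$ i.i.d., so $\sum_{i=1}^B \mathbbm{1}(M_i > c)$ is a sum of conditionally i.i.d.\ Bernoulli variables with mean $p_c := \mbP(M_1 > c \mid \mcX_n)$. Splitting $M_1$ again, the concentration inequality for the permuted V-statistic of dHSIC (Lemma~\ref{le_dhsic_con_per}) controls the permutation upper tail, giving $\mbP(T(\mcX_n^{\bphi_1}) > C_3\sqrt{\log(1/\alpha)/n} \mid \mcX_n) \leq \alpha/4$, and the upper Laplace tail gives $\mbP(2\Delta_T\xi_{\epsilon,\delta}^{-1}\zeta_1 > C_4\log(1/\alpha)/(n\xi_{\epsilon,\delta})) \leq \alpha/4$. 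Choosing $c = C_3\sqrt{\log(1/\alpha)/n} + C_4\log(1/\alpha)/(n\xi_{\epsilon,\delta})$ then forces $p_c \leq \alpha/2$, and a multiplicative Chernoff bound for a Binomial with mean at most $B\alpha/2$ shows the count exceeds its threshold with probability at most $\exp(-B\alpha/6) \leq \beta/2$, which is precisely where $B \geq 6\alpha^{-1}\log(2\beta^{-1})$ is used. The two choices of $c$ are compatible provided $\rho$ dominates $C_1\sqrt{\log(1/\beta)/n} + C_3\sqrt{\log(1/\alpha)/n} + (C_2+C_4)(\log(1/\alpha)\vee\log(1/\beta))/(n\xi_{\epsilon,\delta})$; collecting the two rate groups, using $a+b \asymp \max\{a,b\}$, and absorbing constants into $C_K$ yields the stated bound.

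The main obstacle is the permuted V-statistic concentration inequality (Lemma~\ref{le_dhsic_con_per}): pinning the permutation upper quantile at the sharp logarithmic rate $\sqrt{\log(1/\alpha)/n}$, rather than the polynomial $\alpha^{-1/2}$ rate that a plain variance/Chebyshev argument produces, is exactly what upgrades the polynomial error dependence of \citet[Theorem~4]{Kim2023DPP} to the logarithmic dependence claimed here, and it rests on the non-trivial extension of the permuted U-statistic concentration of \cite{Kim2022MinimaxOO} from $d=2$ to the $d$-fold tensor-product kernel. A secondary point to verify carefully is that the lower deviation of $T(\mcX_n)$ and the permuted upper quantile separate at the same order $n^{-1/2}$, so that the admissible ranges for $c$ overlap once $\rho$ reaches the claimed bound, and that every invoked concentration bound is uniform in $P$ subject only to the kernel bounds, thereby preserving the supremum over $\mcP_{\dHSIC_{\bk}}(\rho)$.
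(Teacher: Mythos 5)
Your proposal is correct and rests on exactly the same analytical core as the paper's proof: the permuted-dHSIC concentration inequality (Lemma~\ref{le_dhsic_con_per}) to pin the permutation threshold at the rate $\sqrt{\log(1/\alpha)/n}$, the exponential inequality for the empirical dHSIC (Lemma~\ref{le_ineq_e_dhsic}) to lower-bound the test statistic at the rate $\rho - O(\sqrt{\log(1/\beta)/n})$, and the two Laplace tails for the privacy noise, assembled into the same two-regime rate. The only place you diverge is the bookkeeping for finite $B$: the paper passes through the quantile representation (Lemma~\ref{lemma_repre1}) and then invokes \citet[Lemma~21]{Kim2023DPP} to dominate the sample quantile $q_{1-\alpha,B}$ by the infinite-$B$ quantile $q_{1-\alpha/6,\infty}$ with probability $1-\beta/2$, whereas you avoid quantiles altogether by fixing a deterministic threshold $c$, using the inclusion $\{M_0>c\}\cap\{\text{fail to reject}\}\subseteq\{\sum_i \mathbbm{1}(M_i>c)>(B+1)\alpha-1\}$, and applying a conditional multiplicative Chernoff bound to the exceedance count; this is essentially the quantile-comparison lemma's own proof inlined, so the two routes are equivalent in substance. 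One quantitative slip to repair: with $p_c\leq\alpha/2$ the count has conditional mean at most $B\alpha/2$, but your rejection threshold is $(B+1)\alpha-1=B\alpha-(1-\alpha)<B\alpha$, so the bound $\exp(-B\alpha/6)$ for deviations above $2\mu$ does not literally apply; choose the constants $C_3,C_4$ in $c$ large enough that $p_c\leq\alpha/4$ (say), and use $B\alpha\geq 6\log(2\beta^{-1})\geq 6\log 2>4$ so that $(B+1)\alpha-1\geq 3\cdot(B\alpha/4)$, after which Chernoff gives a bound $\leq\beta/2$ under the stated condition on $B$. With that constant adjustment, and noting (as the paper does in its analogue of \eqref{eq_08}) that $\max\{\log(1/\alpha),\log(1/\beta)\}\gtrsim 1$ so the $O(\sqrt{K_0/n})$ bias terms are absorbed, your argument goes through uniformly over $\mcP_{\dHSIC_{\bk}}(\rho)$ exactly as claimed.
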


To simplify the discussion, suppose that $\alpha$ and $\beta$ are fixed. The bound in Theorem~\ref{th_mini_sepa_dpdhsic} clarifies how privacy affects the uniform detection boundary. As $\xi_{\epsilon,\delta}\to\infty$, $\phi_{\dpdHSIC}$ reduces to the non-private permutation dHSIC test \citep{Pfister2018KernelbasedTF}, and the bound recovers the non-private rate $\sqrt{{\max \{ \log(1/\alpha), \log(1/\beta) \}}/{n}}$. 
This also gives a uniform power guarantee for the non-private permutation dHSIC test, addressing the open question raised in \citet[Remark~2]{Pfister2018KernelbasedTF}. 

More generally, the leading term is determined by the privacy level. 
In the low-privacy regime $\xi_{\epsilon,\delta}\gtrsim n^{-1/2}$, the rate remains $n^{-1/2}$, so the added privacy noise has no first-order effect on the separation rate. In the high-privacy regime $\xi_{\epsilon,\delta}\lesssim n^{-1/2}$, the rate becomes $n^{-1}\xi_{\epsilon,\delta}^{-1}$ and is governed by the privacy noise. This bound is meaningful only when $n^{-1}\xi_{\epsilon,\delta}^{-1}\to0$, in line with condition (iii) of Theorem~\ref{property_dpdH}. 

Compared with the general uniform-power result of \citet[Theorem~4]{Kim2026DPP}, restated in Lemma~\ref{lemma_uniformpower}, our argument uses a concentration inequality for the empirical dHSIC rather than Chebyshev's and Markov's inequalities. Consequently, in the low-privacy regime, the polynomial dependence on $\alpha$ and $\beta$ can be improved to a logarithmic dependence. Theorem~\ref{th_minimax_sepa} further shows that this logarithmic dependence cannot, in general, be improved when $\alpha\asymp\beta$ in the low-privacy regime.

We next investigate the minimax optimality of $\phi_{\dpdHSIC}$ when $\mbX^j=\mbR^{p_j}$ for $j\in[d]$. To this end, we establish a lower bound for the minimax separation measured by the dHSIC metric. Let $\phi:\mcX_n\to\{0,1\}$ be a test function, and let $\mathcal{P}_0 := \bigg\{P_{X^1,\dots,X^d}\in\mcP_{\mathbb{R}^{p_1}\times\dots\times\mathbb{R}^{p_d}}: P_{X^1,\dots,X^d}=\otimes_{j=1}^d P_{X^j}\bigg\}$ denote the class of null distributions on $\mathbb{R}^{p_1}\times\dots\times\mathbb{R}^{p_d}$. The class of $(\epsilon,\delta)$-DP tests with level at most $\alpha$ is defined by
$$
\Phi_{\alpha, \epsilon, \delta} := \bigg\{ \phi : \sup_{P \in \mathcal{P}_0} \mathbb{E}_{P}[\phi] \leq \alpha \text{ and } \phi \text{ is } (\epsilon, \delta)\text{-DP} \bigg\}. 
$$

It is of theoretical interest to develop the lower bound on minimum separation for any valid and $(\epsilon, \delta)$-DP test, commonly referred to as minimax separation in the literature. The minimax separation in terms of dHSIC metric is defined as
$$
\rho^*_{\dHSIC}(\alpha, \beta, \epsilon, \delta, n) := \inf \bigg\{ \rho > 0 : \inf_{\phi \in \Phi_{\alpha, \epsilon, \delta}} \sup_{P \in \mathcal{P}_{{\dHSIC}_{\bk}}(\rho)} \mathbb{E}_{P}[1 - \phi] \leq \beta \bigg\}.
$$
This quantity characterizes the smallest dHSIC signal strength above which some valid $(\epsilon,\delta)$-DP $\alpha$-level test can achieve power at least $1-\beta$ uniformly over the alternative class. Equivalently, signals below this separation cannot be uniformly detected with the desired power by any valid $(\epsilon,\delta)$-DP test. The next theorem establishes a lower bound for $\rho^*_{\dHSIC}$ under DP constraints.
\begin{theorem}[Minimax Separation in dHSIC]\label{th_minimax_sepa}
Let $\alpha$ and $\beta$ be positive numbers satisfying $\alpha+\beta<c$ for any fixed $c<1/2$, $\epsilon > 0$ and $\delta \in [0, 1)$. Assume that the kernel functions $k^j$ are translation invariant on $\mathbb{R}^{p_j}$ for $j\in[d]$. In particular, there exist some functions $\kappa^j$ such that $k^j(x,x') = \kappa^j(x - x')$ for all $x, x' \in \mathbb{R}^{p_j}$. Moreover, assume that there exist positive constants $\eta^j$ such that $\kappa^j(0) - \kappa^j(x_0) \geq \eta^j$ for some $x_0^j \in \mathbb{R}^{p_j}$. Then the minimax separation over $\mathcal{P}_{\dHSIC_{\bk}}(\rho)$ is lower bounded as
$$
\rho^*_{\dHSIC}(\alpha, \beta, \epsilon, \delta, n) \geq C_{\eta} \max \bigg\{ \min \bigg\{ \sqrt{\frac{\log(1/(\alpha + \beta))}{n}}, 1 \bigg\}, \min\bigg\{\frac{1}{n\xi_{\epsilon,\delta}},1\bigg\} \bigg\},
$$
where $C_{\eta}$ is a positive constant that only depends on $\eta^j$ for $j\in[d]$.
\end{theorem}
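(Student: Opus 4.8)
The plan is to prove the lower bound by a two-point (Le Cam) reduction, constructing a single null--alternative pair that defeats all valid $(\epsilon,\delta)$-DP tests, and analysing it once for the statistical term and once for the privacy term; the claimed bound is then the larger of the two resulting separations, capped at the maximal attainable $\dHSIC_{\bk}$. First I would fix the construction. On each coordinate take the two-point marginal $\mu^j$ placing mass $1/2$ on each of $0$ and $x_0^j$, where $x_0^j$ is the point supplied by the hypothesis $\kappa^j(0)-\kappa^j(x_0^j)\ge\eta^j$, and set $P_0:=\otimes_{j=1}^d\mu^j\in\mcP_0$. Introduce a pure $d$-way interaction through $P_\theta(\cdot):=P_0(\cdot)\{1+\theta\prod_{j=1}^d h^j(\cdot)\}$, where $h^j\in\{+1,-1\}$ is the $\mu^j$-centred contrast; for $|\theta|\le 1$ this is a genuine probability measure whose one-dimensional marginals all coincide with $\mu^j$, so that $\dHSIC_{\bk}$ measures exactly the injected dependence. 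Because $\bk=\otimes_j k^j$, the embedding difference factorises as $\Pi(P_\theta)-\Pi(P_0)=\theta\,\otimes_{j=1}^d v^j$ with $v^j=\tfrac12\{k^j(0,\cdot)-k^j(x_0^j,\cdot)\}$, whence a direct computation gives $\|v^j\|_{\mcH^j}^2=\tfrac12\{\kappa^j(0)-\kappa^j(x_0^j)\}\ge\eta^j/2$ and therefore $\dHSIC_{\bk}(P_\theta)=\theta\prod_{j=1}^d\|v^j\|_{\mcH^j}\ge\theta\prod_{j=1}^d\sqrt{\eta^j/2}=:c_1(\eta)\,\theta$. Thus $P_\theta\in\mcP_{\dHSIC_{\bk}}(\rho)$ as soon as $\theta\ge\rho/c_1(\eta)$, and I only ever need this \emph{lower} bound on the signal.

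For the statistical term I would show that no level-$\alpha$ test---private or not---can have type II error at most $\beta$ against $P_\theta$ when $\theta$ is small. Any such test would force $\TV(P_0^{\otimes n},P_\theta^{\otimes n})\ge 1-\alpha-\beta$, while the Bretagnolle--Huber inequality gives $\TV(P_0^{\otimes n},P_\theta^{\otimes n})\le\sqrt{1-\exp(-n\,\KL(P_\theta\|P_0))}$ with $\KL(P_\theta\|P_0)=\tfrac12\theta^2\{1+o(1)\}$ by a second-order expansion (here $\mbE_{P_0}[\prod_j h^j]=0$ kills the first order and $\mbE_{P_0}[(\prod_j h^j)^2]=1$). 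Since $\alpha+\beta<c<1/2$, comparing the two bounds shows testing is impossible whenever $n\theta^2\lesssim\log(1/(\alpha+\beta))$, i.e. up to $\theta\asymp\sqrt{\log(1/(\alpha+\beta))/n}$; translating through $\dHSIC_{\bk}(P_\theta)\ge c_1(\eta)\theta$ yields $\rho^*_{\dHSIC}\gtrsim\sqrt{\log(1/(\alpha+\beta))/n}$, and the $\min\{\cdot,1\}$ enters because $\theta\le1$ and the bounded kernels cap $\dHSIC_{\bk}$.

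For the privacy term the extra leverage is that $\phi$ must be $(\epsilon,\delta)$-DP. I would couple $P_0^{\otimes n}$ and $P_\theta^{\otimes n}$ coordinatewise by the maximal coupling, giving expected Hamming distance $\mbE[d_{\ham}]=n\,\TV(P_\theta,P_0)=n\theta/2$. Invoking the group-privacy behaviour of $(\epsilon,\delta)$-DP mechanisms along this coupling---a private analogue of the two-point method in which the per-coordinate indistinguishability cost aggregates additively into $\xi_{\epsilon,\delta}=\epsilon+\log(1/(1-\delta))$---bounds the total variation between the test's output laws by a quantity $\lesssim\mbE[d_{\ham}]\,\xi_{\epsilon,\delta}\asymp n\theta\,\xi_{\epsilon,\delta}$. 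Hence no valid DP test succeeds once $n\theta\,\xi_{\epsilon,\delta}\lesssim1$, i.e. up to $\theta\asymp(n\xi_{\epsilon,\delta})^{-1}$, giving $\rho^*_{\dHSIC}\gtrsim(n\xi_{\epsilon,\delta})^{-1}$. Taking the maximum of the two separations and letting $C_\eta$ absorb $c_1(\eta)$ together with the absolute constants yields the stated bound.

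I expect the main obstacle to be this last step: making the coupling/group-privacy argument quantitatively emit exactly the factor $\xi_{\epsilon,\delta}$---so that $\epsilon$ and $\delta$ combine additively---and pushing the $\delta$-contribution through the $n$-fold coupling without incurring an extra factor of $n$, which requires a careful accounting of how $(\epsilon,\delta)$-DP degrades under group privacy rather than a crude union bound. A secondary but genuine difficulty is verifying the dHSIC lower bound purely from the weak kernel hypothesis: one must check that the rank-one interaction $\prod_j h^j$ is actually detected in the projective tensor-product RKHS and that each factor $\|v^j\|_{\mcH^j}^2$ is controlled from below solely by $\eta^j$, independently of the kernel scale, so that the final constant depends only on $(\eta^1,\dots,\eta^d)$ as claimed.
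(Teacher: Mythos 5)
Your proposal is correct and follows the same two-part skeleton as the paper's proof: a Le Cam two-point construction analyzed via the Bretagnolle--Huber inequality and KL tensorization for the $\sqrt{\log(1/(\alpha+\beta))/n}$ term, and a coupling/group-privacy argument for the $1/(n\xi_{\epsilon,\delta})$ term, each capped at a constant and combined by a max. The differences are in execution. For the construction, the paper boosts only the two diagonal cells of the $2^d$-cell uniform distribution by $v$ and depresses the other $2^d-2$ cells uniformly, then evaluates $\dHSIC^2_{\bk}$ through its closed form with a partition argument; your parity perturbation $P_\theta=P_0\{1+\theta\prod_j h^j\}$ is cleaner, since the embedding difference factorizes as the rank-one tensor $\theta\otimes_j v^j$ with $\|v^j\|_{\mcH^j}^2=\tfrac12\{\kappa^j(0)-\kappa^j(x_0^j)\}\geq\eta^j/2$, giving $\dHSIC_{\bk}(P_\theta)=\theta\prod_j\|v^j\|_{\mcH^j}$ exactly; moreover $\chi^2(P_\theta\|P_0)=\theta^2$ exactly, so you should replace your second-order expansion (with its $o(1)$) by $\KL\leq\chi^2$, which is non-asymptotic and also covers the capped regime where $\theta$ is a constant --- this is what the paper does for its construction. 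For the privacy term, the paper imports the exponential-form coupling bound of Acharya et al.\ (type II error at least $e^{-D\epsilon}(0.9-\alpha)-D\delta$ with $D=10\,\mbE[d_{\ham}]$) and extracts the condition $D\lesssim\min\{\log(1+1/(5\beta)),1\}/(\epsilon+\delta)$, whereas you assert a linearized bound, output-law $\TV\lesssim\mbE[d_{\ham}]\,(\epsilon+\delta)$; this linearization is provable (condition on $d_{\ham}\leq1/\epsilon$, whose complement has probability at most $\epsilon\,\mbE[d_{\ham}]$ by Markov, and on that event group privacy at distance $k$ costs a multiplicative $e^{k\epsilon}-1\lesssim k\epsilon$ and an additive $\delta(e^{k\epsilon}-1)/(e^{\epsilon}-1)\lesssim k\delta$), so your route closes. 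Finally, both obstacles you flag are non-issues: you do not need the argument to emit exactly $\xi_{\epsilon,\delta}$ --- it naturally emits $\epsilon+\delta$, and since $\log(1/(1-\delta))\geq\delta$ one has $\epsilon+\delta\leq\xi_{\epsilon,\delta}$, so a bound of order $1/(n(\epsilon+\delta))$ implies the stated, weaker one (this is exactly how the paper concludes); and the $\delta$-contribution accumulates linearly in the Hamming distance of the coupled pair, not in $n$, precisely because of the truncation just described.
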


We present some comments as follows. First, the assumptions in Theorem~\ref{th_minimax_sepa} are mild. The restriction $\alpha+\beta<c$ is natural in testing problems, since the practically relevant setting usually takes both error probabilities to be small. Moreover, many standard kernels, including Gaussian, Laplacian, and inverse multiquadric kernels, are translation invariant. If $k^j$ is further characteristic, then the separation condition involving $\eta^j$ is automatically satisfied for a suitable $x_0^j$.

Second, when $\alpha$ and $\beta$ are fixed, our proposed dpdHSIC achieves the minimax rate optimality against the class of alternatives determined by the dHSIC metric in $\mathbb{R}^{p_1} \times\dots\times \mathbb{R}^{p_d}$. Indeed, comparing Theorem~\ref{th_mini_sepa_dpdhsic} with Theorem~\ref{th_minimax_sepa} shows that the upper bound for the minimum separation of $\phi_{\dpdHSIC}$ matches the minimax lower bound $\rho^*_{\dHSIC}$. This conclusion holds for all privacy regimes, since no additional restriction is imposed on $\epsilon$ and $\delta$. 

Third, the lower bound also covers the case where $\alpha$ and $\beta$ vary with $n$. When $\alpha\asymp\beta$, the efficiency of the proposed test can be summarized by
$$
\frac{\rho_{\phi_{\dpdHSIC}}}{\rho^*_{\dHSIC}}\lesssim
\begin{cases}
1, & \text{if } \log^{1/2}(1/\alpha)n^{-1/2} \lesssim \xi_{\epsilon,\delta}, \\
\sqrt{\log(1/\alpha)}n^{-1/2}\xi_{\epsilon,\delta}^{-1}, & \text{if } \log^{-1/2}(1/\alpha)n^{-1/2} \lesssim \xi_{\epsilon,\delta} \lesssim \log^{1/2}(1/\alpha)n^{-1/2}, \\
\log(1/\alpha), & \text{if } \xi_{\epsilon,\delta} \lesssim \log^{-1/2}(1/\alpha)n^{-1/2}.
\end{cases}
$$
Thus, even when $\alpha$ and $\beta$ decrease to zero, dpdHSIC remains minimax optimal up to at most logarithmic factors. 

The minimax analysis in Theorem~\ref{th_minimax_sepa} also reveals the intrinsic trade-off between privacy and statistical power: stronger privacy increases the minimum separation rate for uniform detection, whereas weak privacy does not change the order of separation rate.

\subsection{Uniform Power in \texorpdfstring{$L_2$}{L2} Metric}\label{sec_powerL2}
We next study the minimum separation of the dpdHSIC test in the $L_2$ metric. This result follows the framework of \cite{Albert2022AdaptiveTO}, with additional care devoted to the bias induced by replacing the associated U-statistic with the V-statistic based on the $(2d)$th-order kernel. We restrict attention to a smooth class of alternatives described by Sobolev balls. For a smoothness parameter $s>0$ and radius $R>0$, define
\begin{equation*}
    \mcS_d^s(R) := \bigg\{ f \in L_1(\mathbb{R}^d) \cap L_2(\mathbb{R}^d):\int_{\mathbb{R}^d} \|w\|_2^{2s} |\hat{f}(w)|^2 \dd w \leq (2\pi)^d R^2 \bigg\},
\end{equation*}
where $\hat f$ is the Fourier transform of $f$, defined on $\mbR^d$ by $\hat f(w)=\int_{\mbR^d}f(x) e^{-ix^Tw}\dd x$. The condition $f \in L_1(\mathbb{R}^d) \cap L_2(\mathbb{R}^d)$ restricts the function to be both integrable and square-integrable with respect to the Lebesgue measure on $\mbR^d$. 

For $\rho>0$, $\mcP_{L_2}(\rho)$ specifies the collection of distributions $P_{X^1,\dots, X^d}$ on $\mbR^{p_1}\times\dots\times\mbR^{p_d}$, where $P_{X^1,\dots, X^d}$ is equipped with the Lebesgue density function $p_{X^1,\dots, X^d}$ and the product of the marginals $p_{X^1}\dots p_{X^d}$ such that $\|p_{X^1,\dots, X^d}-p_{X^1}\dots p_{X^d}\|_{L_2}\geq\rho$. Here $\|\cdot\|_{L_2}$ refers to the $L_2$ norm on $\mbR^d$ with the Lebesgue measure. 
We focus on the smooth and uniformly bounded subclass
\begin{align*}
    {\mcP}^s_{L_2}(\rho) := \bigg\{ P_{X^1,\dots, X^d} \in {\mcP}_{L_2}(\rho) : p_{X^1,\dots, X^d} - p_{X^1}\dots p_{X^d} \in \mcS_{\sum p_j}^s(R), \\
    \max(\|p_{X^1,\dots, X^d}\|_{L_\infty}, \|p_{X^1}\dots p_{X^d}\|_{L_\infty}) \leq M \bigg\},
\end{align*}
Our aim is to derive the minimum value of $\rho$ such that the dpdHSIC test has desired uniform power over ${\mcP}^s_{L_2}(\rho)$. 

Following \cite{Albert2022AdaptiveTO}, we focus on Gaussian kernels. For $x^j,x'^j\in\mbR^{p_j}$, let $\blambda_j=(\lambda_{j,1},\dots,\lambda_{j,p_j})\in(0,\infty)^{p_j}$ and set
$$
k^j(x^j,x'^j)=\prod_{i=1}^{p_j}\frac{1}{\sqrt{2\pi}\lambda_{j,i}}\exp\bigg\{-\frac{(x_i^j-x_i'^j)^2}{2\lambda_{j,i}^2}\bigg\}.
$$
The bandwidths $\blambda_j$ may depend on $n$, and consequently $K_0$ also varies throughout this subsection. The minimum separation of the dpdHSIC test over $L_2$ alternatives is
$$
\rho_{\phi_{\dpdHSIC},L_2}(\alpha, \beta, \epsilon, \delta, n, s,R,M, p_1,\dots,p_d) := \inf \bigg\{ \rho > 0 : \sup_{P \in  {\mcP}^s_{L_2}(\rho)} \mbE_{P} [ 1 - \phi_{\dpdHSIC} ] \leq \beta \bigg\}.
$$
Technically speaking, the proof proceeds by lower-bounding the corresponding U-statistic uniformly over the alternative class for sufficiently large $\rho$, and then controls the gap between the U- and V-statistics. We denote $\sum_{j=1}^dp_j=p$, $\prod_{i=1}^{p_j}\lambda_{j,i}$ by $\lambda_j$ for $j\in[d]$ and $\prod_{j=1}^d\lambda_j$ as $\lambda_0$ for simplicity. 
The next theorem presents an upper bound for $\rho_{\phi_{\dpdHSIC},L_2}$ with a set of parameters including $\lambda_{j,i}$ and $n$.
\begin{theorem}[Minimum Separation of $\phi_{\dpdHSIC}$ over ${\mcP}^s_{L_2}$]
\label{th_L2separation}
    Assume that $\alpha \in (0,1)$, $\beta\in(0, 1-\alpha)$ are fixed, and $\epsilon>0$, $\delta \in [0, 1)$, 
    $B \geq 6\alpha^{-1} \log(2\beta^{-1})$, $\lambda_j \leq 1$ for $j\in [d]$. The minimum separation of the dpdHSIC with the Gaussian kernels over ${\mcP}^s_{L_2}$ is upper bounded as
\begin{align*}
    \rho^2_{\phi_{\dpdHSIC},L_2} \leq C_{\alpha, \beta, s,R,M, p_1,\dots,p_d} \bigg\{ \sum_{j=1}^d\sum_{i=1}^{p_j} \lambda_{j,i}^{2s} + \frac{1}{n \sqrt{\lambda_0}}+ \frac{1}{n^{3/2} \lambda_0 \xi_{\epsilon, \delta}} + \frac{1}{n^2 \lambda_0 \xi^2_{\epsilon, \delta}}  \\
    +\sum_{t=1}^{d-2}\sum_{\bi^d_t}\frac{1}{n^t\lambda_{i_1}\dots\lambda_{i_t}} 
    \bigg\}
\end{align*}
where $ C_{\alpha, \beta, s,R,M, p_1,\dots,p_d} $ is some positive constant, depending only on $\alpha, \beta, s,R,M, p_1,\dots,p_d$.
\end{theorem}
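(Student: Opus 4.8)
The plan is to control the type II error uniformly over ${\mcP}^s_{L_2}(\rho)$ by showing that, as soon as $\rho^2$ exceeds a constant multiple of the claimed right-hand side, the noisy statistic $M_0$ beats the permutation-plus-noise critical value with probability at least $1-\beta$. I would follow the uniform-power machinery behind Lemma~\ref{lemma_uniformpower}, but replace its Chebyshev/Markov steps by the sharp concentration inequalities for the permuted U- and V-statistics of dHSIC (Lemma~\ref{le_u_dhsic_con_per} and Lemma~\ref{le_dhsic_con_per}), which is what upgrades polynomial dependence on $\alpha,\beta$ to logarithmic. This reduces the problem to the deterministic requirement that the population signal dominate the sum of (a) the estimator's fluctuation about its mean, (b) the permutation critical value, and (c) the Laplace-noise quantiles; the hypothesis $B \geq 6\alpha^{-1}\log(2\beta^{-1})$ guarantees the Monte-Carlo permutation $p$-value approximates the exact critical value accurately enough. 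Since $\alpha,\beta$ are fixed, all $\log(1/\alpha),\log(1/\beta)$ factors are absorbed into $C_{\alpha,\beta,s,R,M,p_1,\dots,p_d}$.

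First I would convert the $L_2$ separation into a lower bound on the population $\dHSIC^2_{\bk}$. Because the Gaussian kernels are translation invariant and normalized to integrate to one, Plancherel's identity gives $\dHSIC^2_{\bk} = (2\pi)^{-p}\int |\hat g(w)|^2 \prod_{j,i} e^{-\lambda_{j,i}^2 w_{j,i}^2/2}\,\dd w$ with $g := p_{X^1,\dots,X^d} - p_{X^1}\cdots p_{X^d}$, so that $\dHSIC^2_{\bk} \geq \|g\|_{L_2}^2$ minus the high-frequency deficit $(2\pi)^{-p}\int |\hat g(w)|^2 \big(1-\prod_{j,i} e^{-\lambda_{j,i}^2 w_{j,i}^2/2}\big)\,\dd w$. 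Controlling this deficit through the membership $g \in \mcS^s_{p}(R)$ bounds it by $C\sum_{j,i}\lambda_{j,i}^{2s}$, yielding $\dHSIC^2_{\bk} \gtrsim \rho^2 - C\sum_{j,i}\lambda_{j,i}^{2s}$ and explaining the approximation-bias term.

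Next I would pass from the population quantity to the empirical statistic through the auxiliary U-statistic estimator of Appendix~\ref{sec_u_dhsic}. Its mean equals $\dHSIC^2_{\bk}$ and, for the normalized Gaussian kernel, $\mbE[(k^j)^2]\asymp \lambda_j^{-1}$ makes its dominant variance $\asymp n^{-2}\lambda_0^{-1}$, so with high probability it stays within $\asymp (n\sqrt{\lambda_0})^{-1}$ of its mean; combined with the permutation critical value of the squared statistic concentrating at the same $\asymp (n\sqrt{\lambda_0})^{-1}$ scale (via Lemma~\ref{le_dhsic_con_per} and $\sqrt{\cdot}$), this produces the $n^{-1}\lambda_0^{-1/2}$ term. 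For the privacy contribution, Proposition~\ref{sen_dhsic} gives sensitivity $\Delta_T = 2d\sqrt{K_0}/n$, and since the sup of the normalized Gaussian kernel forces $K_0 \asymp \lambda_0^{-1}$, the Laplace noise has scale $\sigma_n := 2\Delta_T/\xi_{\epsilon,\delta} \asymp (n\sqrt{\lambda_0}\,\xi_{\epsilon,\delta})^{-1}$. Feeding its $1-\alpha$ and $\beta$ quantiles into the unsquared comparison and then squaring produces the pure-noise term $\sigma_n^2 = n^{-2}\lambda_0^{-1}\xi_{\epsilon,\delta}^{-2}$ together with a cross term of order $n^{-3/2}\lambda_0^{-3/4}\xi_{\epsilon,\delta}^{-1}$ between the $n^{-1/2}\lambda_0^{-1/4}$-scale critical value and the noise, the latter bounded by the stated $n^{-3/2}\lambda_0^{-1}\xi_{\epsilon,\delta}^{-1}$ using the assumption $\lambda_j \leq 1$ (hence $\lambda_0\le 1$).

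The hardest part, and where most of the effort goes, will be bounding the discrepancy between the V-statistic $\widehat{\dHSIC}^2$ and its U-statistic surrogate, since $\widehat{\dHSIC}^2$ is built from the $2d$-th order kernel and carries diagonal (``coincidence'') terms absent from the U-statistic. I would expand the three V-sums, of orders $2$, $d+1$ and $2d$, and group their summands by the pattern of coinciding indices across the $d$ coordinate-groups. A coincidence forcing an evaluation of $k^j$ near its diagonal inflates that factor to magnitude $\asymp \lambda_j^{-1}$ in expectation, so a pattern touching a set $\{i_1,\dots,i_t\}$ of groups contributes $\asymp n^{-t}\lambda_{i_1}^{-1}\cdots\lambda_{i_t}^{-1}$; summing over all patterns yields exactly $\sum_{t=1}^{d-2}\sum_{\bi^d_t} n^{-t}(\lambda_{i_1}\cdots\lambda_{i_t})^{-1}$, while the patterns touching $d-1$ or $d$ groups are absorbed into the variance and noise terms already accounted for. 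Choosing $\rho^2$ to exceed a constant times the sum of all four groups of terms then closes the argument; this delicate combinatorial bookkeeping of coincidence patterns for general $d$ is the principal obstacle, and it is precisely the $2d$-th-order structure that distinguishes the analysis from the $d=2$ treatments in \cite{Albert2022AdaptiveTO} and \cite{Kim2023DPP}.
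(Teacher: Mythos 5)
There is a genuine gap, and it sits exactly at the two points where your sketch leans on sup-norm-based bounds. First, your plan to control the permutation critical value via the concentration inequalities of Lemma~\ref{le_u_dhsic_con_per} and Lemma~\ref{le_dhsic_con_per} cannot deliver the $\frac{1}{n\sqrt{\lambda_0}}$ term: those lemmas use only the kernel sup-norm $K_0$, and for the normalized Gaussian kernels $K_0\asymp\lambda_0^{-1}$, so they bound the quantile of the \emph{squared} permuted statistic by $K_0/n\asymp(n\lambda_0)^{-1}$, not by $(n\sqrt{\lambda_0})^{-1}$ as you assert. Since $\lambda_0\to0$, that is strictly worse, and plugging it in would break the claimed rate (e.g.\ it would not yield $n^{-2s/(4s+p)}$ in the low-privacy regime). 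The paper instead bounds the permuted U-statistic by Chebyshev/Markov using the moment computation $\mbE_{\bpi}[U_{\bpi,\dHSIC}\mid\mcX_n]=0$ and $\mbE[\Var_{\bpi}(U_{\bpi,\dHSIC}\mid\mcX_n)]\lesssim n^{-2}\lambda_0^{-1}$, which exploits the bounded-density assumption in ${\mcP}^s_{L_2}$ rather than the sup-norm; your worry about polynomial versus logarithmic dependence on $\alpha,\beta$ is moot here because $\alpha,\beta$ are fixed in this theorem, so the sharp concentration lemmas buy nothing and actively lose a $\lambda_0^{-1/2}$ factor.

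Second, and more fundamentally, your treatment of the V-versus-U discrepancy fails on its largest term. The fully diagonal part of the V-statistic (e.g.\ the $i_1=i_2$ terms of the $\frac{1}{n^2}\sum_{\bj_2^n}$ sum) contributes $\asymp K_0/n\asymp(n\lambda_0)^{-1}$. This is \emph{not} dominated by the variance term $(n\sqrt{\lambda_0})^{-1}$, nor by the noise terms $n^{-3/2}\lambda_0^{-1}\xi_{\epsilon,\delta}^{-1}$ and $n^{-2}\lambda_0^{-1}\xi_{\epsilon,\delta}^{-2}$ in the low-privacy regime, so it cannot be ``absorbed into the variance and noise terms already accounted for'' as you claim. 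The paper's proof resolves this by splitting $V_{\dHSIC}-U_{\dHSIC}=D_1+D_2$, where $D_1$ collects precisely these large diagonal contributions and is \emph{invariant under permutation} (for translation-invariant kernels the diagonal evaluations are deterministic, and the remaining part of $D_1$ is a symmetric function of the sample); hence $D_1$ appears identically in the test statistic and in every permuted statistic and cancels exactly when the statistic is compared with the permutation quantile. Only the smaller, permutation-dependent remainder $D_2$ must be bounded (by Markov), and it is exactly this remainder that produces the $\sum_{t=1}^{d-2}\sum_{\bi^d_t}n^{-t}(\lambda_{i_1}\cdots\lambda_{i_t})^{-1}$ terms in the statement. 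Without identifying this cancellation, your coincidence-pattern bookkeeping leaves an uncancelled $(n\lambda_0)^{-1}$ term and the proof cannot close.
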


\noindent Several comments are in order. For convenience, write
$$
\text{(I)}=\frac{1}{n\sqrt{\lambda_0}},\quad
\text{(II)}=\frac{1}{n^{3/2}\lambda_0\xi_{\epsilon,\delta}},\quad
\text{(III)}=\frac{1}{n^2\lambda_0\xi_{\epsilon,\delta}^2},\quad
\text{(IV)}=\sum_{t=1}^{d-2}\sum_{\bi_t^d}\frac{1}{n^t\lambda_{i_1}\cdots\lambda_{i_t}}.
$$

The bandwidth condition in Theorem~\ref{th_L2separation} is mild. In nonparametric testing, the bandwidths usually shrink to zero as $n\to\infty$ \citep{Albert2022AdaptiveTO}, while the theorem only requires $\lambda_j=\prod_{i=1}^{p_j}\lambda_{j,i}\lesssim1$. The additional term (IV) arises from the discrepancy between the U- and V-statistics when $d>2$. This term is specific to the high-order structure of dHSIC, since both statistics are built from a $(2d)$th-order kernel. 

The leading term depends on the privacy regime. Suppose first that $2p_j\leq p$ for $j\in[d]$, so the vector dimensions are relatively balanced. In this case, term (IV) is absorbed by term (I) in the low-privacy regime, while the terms with $t=1$ and $t\geq2$ in term (IV) are dominated by terms (I) and (III), respectively, in the mid- and high-privacy regimes. 
Thus, under balanced dimensions, term (IV) does not affect the resulting rate. 
In the low-privacy regime, where term (I) is dominant, choosing $\lambda_{j,i}=n^{-2/(4s+p)}$ yields the separation rate $n^{-2s/(4s+p)}$ over the Sobolev ball, which also gives the non-private dHSIC rate as a byproduct. 
In the mid-privacy regime, where term (II) dominates, the bandwidth choice $\lambda_{j,i}=n^{-3/(4s+2p)}\xi_{\epsilon,\delta}^{-1/(2s+p)}$ leads to the rate $n^{-3s/(2s+p)}\xi_{\epsilon,\delta}^{-s/(2s+p)}$. 
In the high-privacy regime, where term (III) dominates, taking $\lambda_{j,i}=(n\xi_{\epsilon,\delta})^{-2/(2s+p)}$ leads to the rate $(n\xi_{\epsilon,\delta})^{-2s/(2s+p)}$. 

These choices can be summarized as
 $$
    \rho_{\phi_{\dpdHSIC},L_2}\lesssim
    \begin{cases}
    n^{-\frac{2s}{4s+p}}, & \text{if } n^{-\frac{2s-p/2}{4s+p}} \lesssim \xi_{\epsilon, \delta} \text{ (low privacy)}, \\
    (n^{\frac{3}{2}}\xi_{\epsilon, \delta})^{-\frac{s}{2s+p}}, & \text{if } n^{-\frac{1}{2}} \lesssim \xi_{\epsilon, \delta} \lesssim n^{-\frac{2s - p/2}{4s+p}} \text{ (mid privacy)}, \\
    (n\xi_{\epsilon, \delta})^{-\frac{2s}{2s+p}}, & \text{if } \xi_{\epsilon, \delta} \lesssim n^{-\frac{1}{2}} \text{ (high privacy)}.
    \end{cases}
    $$
Therefore, the proposed test attains the classical Sobolev testing rate $n^{-2s/(4s+p)}$ in the low-privacy regime. When $\xi_{\epsilon,\delta}\asymp n^{-1/2}$, the rate becomes $n^{-s/(2s+p)}$, matching the minimax rate for density estimation under the $L_2$ loss. 

If $2p_j>p$ for some $j\in[d]$, the V-statistic correction term (IV) may affect the separation rate. In particular, the contribution of term (IV) becomes relevant when $\xi_{\epsilon,\delta}\gtrsim n^{(2p-3p_j-2s)/(2s+p_j)}$, whereas it is negligible outside this range. 
Since this critical value lies in the mid-privacy regime, the interaction between noise and V-statistic bias can be non-monotone in unbalanced dimensions. 
As $\xi_{\epsilon,\delta}$ crosses this threshold, the leading contribution can switch between the V-statistic correction bias and the privacy noise. 
Thus, the resulting rate is governed by which of these two effects is larger, rather than by the privacy noise alone.

We also consider the DP permutation test based on the U-statistic version of dHSIC and derive its minimum separations in both the dHSIC and $L_2$ metrics. The proof follows the same general strategy as that for the V-statistic, but is technically simpler. The resulting minimax rates are close to those in \cite{Kim2026DPP} and confirm the limitation of the U-statistic approach: because the statistic cannot be represented in a squared form, its sensitivity is larger, leading to a less favorable privacy-power trade-off. We defer the detailed statements and proofs to Appendix~\ref{sec_u_dhsic}.

\section{Simulations}\label{sec_simulation}
\subsection{Competing Methods}
For a comprehensive comparison, we consider three competing methods: (i) multiple testing with DP HSIC (MdpHSIC) proposed by \cite{Kim2026DPP}, (ii) dHSIC based on the test of tests (TOT dHSIC) proposed by \cite{Kazan2023TOT}, and (iii) dHSIC based on the subsampled and aggregated randomized response mechanism (SAR dHSIC) proposed by \cite{Pena2025DifferentiallyPH}. We first describe the procedure for method (i), and then briefly review the two general methods in (ii) and (iii). 

\textbf{(i) MdpHSIC.} 
To conduct DP joint independence testing, we consider a multiple pairwise version of the two-variable dpHSIC test. Joint independence holds if and only if $X^j$ is independent of $(X^1, \dots, X^{j-1})$ for all $j \in \{2, \dots, d\}$. This characterization reduces the joint independence problem to a sequence of $d-1$ pairwise independence tests. We run dpHSIC $d-1$ times and combine the results using a Bonferroni correction. Since the procedure privatizes several pairwise tests rather than one direct joint-independence statistic, the privacy budget is split across these tests and additional noise is introduced. For example, setting $K^j=1$ for all $j\in[d]$, the standard error of the total noise in MdpHSIC is about $2d$ times that in dpdHSIC. The procedure is summarized in Algorithm~\ref{al_mul_dphsic}. By Lemma~\ref{lemma_com} and the Bonferroni correction, the algorithm is $(\epsilon,\delta)$-DP and valid.

\textbf{(ii) TOT dHSIC.} TOT is based on the subsample-and-aggregate principle \citep{Nissim2007} and provides a black-box way to privatize a hypothesis test. TOT dHSIC divides the data into $m$ subsets and computes public $p$-values using the permutation dHSIC test based on \eqref{eq_p}.
Under the null, these $p$-values are uniformly distributed on $[0,1]$. Hence the number of $p$-values below the sub-test level $\alpha_0$ follows a binomial distribution with parameters $(m,\alpha_0)$. 
The optimal DP test for binomial data \citep{Awan2018} is then applied to this count to obtain the final DP test statistic. 
This method guarantees differential privacy and non-asymptotic level, but it is sensitive to the choice of $(m,\alpha_0)$. Following \cite{Kim2026DPP}, we set $m=\sqrt{n}$ and $\alpha_0=5\alpha$ throughout the paper. 

\textbf{(iii) SAR dHSIC.} Similar to TOT, SAR is also based on subsample-and-aggregate, but it aggregates the subset-level results differently. After public $p$-values are computed on the subsets, they induce a sequence of sub-test functions. The randomized response mechanism is then applied to each sub-test function. Specifically, it returns the true value of the indicator function with probability $p$ and the flipped value with probability $1-p$, where $p=e^{\epsilon}/(1+e^{\epsilon})$. However, the differential privacy guarantees and valid level for SAR hold only for sufficiently large sample sizes. We use the heuristic method in \citet[Section~3.3]{Pena2025DifferentiallyPH} to choose the tuning parameters. 

\begin{algorithm}
\caption{Multiple Testing Based on dpHSIC}\label{al_mul_dphsic}
    \renewcommand{\algorithmicrequire}{\textbf{Input:}}
    \renewcommand{\algorithmicensure}{\textbf{Output:}}
    \begin{algorithmic}[1]
        \REQUIRE Data $\mcX_n$, level $\alpha \in (0,1)$, privacy parameters $\epsilon > 0$ and $\delta \in [0,1)$, kernels $k^j$ and their upper bounds $K^j$ for $j\in[d]$, permutation number $B \in \mathbb{N}$.  
        \ENSURE Reject $H_0$ under $\alpha$-level or not.  
        \STATE Set $(\epsilon',\delta')=(\epsilon/(d-1),\delta/(d-1))$ and $\alpha'=\alpha/(d-1)$. 
        \FOR{$j \in \{2,3,\dots,d\}$}
            \STATE Calculate the sensitivity $\Delta_T^j=4(\prod_{l=1}^jK^l)^{1/2}n^{-1}$.  
            \STATE Test whether $X^j$ is independent of $(X^1,\dots,X^{j-1})$ using Algorithm~\ref{al_dp_per}, with privacy parameter $(\epsilon',\delta')$, level $\alpha'$, HSIC statistic with kernels $k^j$ and $\otimes_{l=1}^{j-1}k^l$, sensitivity $\Delta_T^j$, and $B$ permutations.
            \STATE Obtain $\mathbbm{1}(\hat{p}^j_{\DP}\leq\alpha')$, indicating whether the pairwise test rejects or not. 
        \ENDFOR
        \RETURN  $\max_j\mathbbm{1}(\hat{p}^j_{\DP} \leq \alpha')$. 
    \end{algorithmic}
\end{algorithm}

\subsection{Numerical Studies on Testing Joint Independence}
Our first goal is to test the joint independence among the variables $\{X^1,\dots,X^d\}$ by our proposed method and the competing methods. 
Without loss of generality, we consider Gaussian kernels as 
\begin{equation}
    k^j(x,x')=\exp\{-\|x-x'\|^2_2/(2\nu^2_j)\}, 
\end{equation}
which are bounded between $0$ and $1$. Following \cite{Pfister2018KernelbasedTF}, we use the median heuristic for choosing the bandwidth $\nu_j$ for the $j$-th kernel such that $\text{median}\{\|X^j_{i_1}-X^j_{i_2}\|^2_2:i_1<i_2\}=2\nu_j^2$. 
We set level $\alpha=0.05$ and number of permutations $B=200$ throughout the simulations. We fix the privacy parameter $\delta = 0$ and vary the parameter $\epsilon$ to control the privacy level.

\textbf{Simulation 1 (Testing level).} Consider the data $\bX=(X^1,\dots,X^d)$ generated as $\bX\sim N(0,I_d)$. Then it holds that 
$$
P_{X^1,\dots, X^d}= \otimes_{j=1}^d P_{X^j}\in H_0,
$$
in \eqref{eq_30}. We conduct $500$ repetitions to study the empirical level. We consider settings where the sample size $ n $ varies from $100$ to $1000$ in steps of $100$, and the privacy parameter $ \epsilon $ varies over the discrete set $\{10^{-4}, 10^{-3}, 10^{-2}, 10^{-1}, 1, 10, 20, 30, 40,50\}$. 
As SAR dHSIC requires a minimum sample size to ensure both privacy and validity, we set the test to not reject the null hypothesis when the sample size does not meet the required standard. 
The empirical sizes are summarized in Table~\ref{table_c1}. Overall, the empirical sizes are generally close to the nominal level for all methods. 

\begin{table}[htbp]
\centering \tabcolsep 4pt \LTcapwidth 5in
\caption{
In Simulation 1, the empirical sizes are reported at the significance level $\alpha=0.05$ and dimension $d=3$. The left panel fixes privacy at $\epsilon=1$ and varies the sample size $n$; the right panel fixes $n=300$ and varies privacy $\epsilon$. 
}
\label{table_c1}{\footnotesize
    \setlength{\tabcolsep}{4pt}
    \begin{threeparttable}
    \resizebox{\textwidth}{!}{%
	\begin{tabular}{cccccccccc}
		\toprule
		& \multicolumn{4}{c}{$\epsilon=1$ and $d=3$} &  & \multicolumn{4}{c}{$n=300$ and $d=3$} \\
		\cmidrule(lr){2-5}\cmidrule(lr){7-10}
        $n$ & dpdHSIC & MdpHSIC & TOT dHSIC & SAR dHSIC & $\epsilon$ & dpdHSIC & MdpHSIC & TOT dHSIC & SAR dHSIC \\
        \midrule
        100 & 0.044 & 0.046 & 0.044 & 0.000 & $10^{-4}$ & 0.030 & 0.074 & 0.056 & 0.000 \\ 
        200 & 0.046 & 0.044 & 0.042 & 0.002 & $10^{-3}$ & 0.032 & 0.074 & 0.056 & 0.000 \\ 
        300 & 0.056 & 0.048 & 0.044 & 0.052 & $10^{-2}$ & 0.044 & 0.048 & 0.046 & 0.000 \\ 
        400 & 0.048 & 0.050 & 0.050 & 0.052 & $10^{-1}$ & 0.040 & 0.044 & 0.044 & 0.000 \\ 
        500 & 0.050 & 0.040 & 0.040 & 0.038 & 1  & 0.056 & 0.048 & 0.044 & 0.052 \\ 
        600 & 0.046 & 0.044 & 0.048 & 0.046 & 10 & 0.042 & 0.054 & 0.040 & 0.032 \\ 
        700 & 0.046 & 0.048 & 0.038 & 0.058 & 20 & 0.046 & 0.056 & 0.040 & 0.032 \\ 
        800 & 0.040 & 0.060 & 0.046 & 0.044 & 30 & 0.052 & 0.054 & 0.040 & 0.032 \\ 
        900 & 0.054 & 0.068 & 0.040 & 0.052 & 40 & 0.058 & 0.036 & 0.030 & 0.058 \\ 
        1000 & 0.054 & 0.048 & 0.044 & 0.028 & 50 & 0.052 & 0.056 & 0.048 & 0.040 \\ 
		\bottomrule            
	\end{tabular}%
    }
    \end{threeparttable}}
\end{table}

\textbf{Simulation 2 (Joint dependence).} Consider data $\bX=(X^1,X^2,X^3)$ generated as $X^1,X^2\overset{i.i.d.}{\sim}N(0,1)$ and $X^3=X^1X^2+e$, where $e\sim N(0,\sigma^2)$ is generated independently of $X^1X^2$. Since the performance of MdpHSIC depends on the ordering of the variables, we randomly shuffle the order of $X^1,X^2,X^3$. 
We perform $200$ repetitions to investigate the empirical powers across different privacy regimes and sample sizes. Specifically, we consider varying privacy $\epsilon$ with fixed sample size $n=1000$, and privacy $\epsilon=10/\sqrt{n},1,\sqrt{n}/10$ with varying sample sizes from $250$ to $1500$. The results are presented in Figures~\ref{fig_s21} and~\ref{fig_s22}, respectively. 

In general, dpdHSIC has higher power than the three competing methods, especially in the mid- and low-privacy regimes. Because the dependence in this setting is primarily joint rather than pairwise, this design may favor dpdHSIC over MdpHSIC. 
In addition, because SAR dHSIC is set to not reject the null hypothesis when the minimum sample-size requirement is not met, its power is $0$ in the high-privacy regime. 
The power of TOT dHSIC is close to that of dpdHSIC in the high-privacy regime. However, as the privacy restrictions are relaxed, this advantage gradually disappears. Compared with the other three methods, TOT dHSIC is less sensitive to changes in the privacy level, possibly because it uses an optimal DP mechanism tailored to binomial variables. Nevertheless, for a fixed sample size, the power of TOT dHSIC is inherently limited by data splitting. Consequently, TOT dHSIC maintains relatively high power under strong privacy constraints, but has diminished power in low-privacy regimes.


\begin{figure}[htbp]
    \centering
\includegraphics[width=1\textwidth]{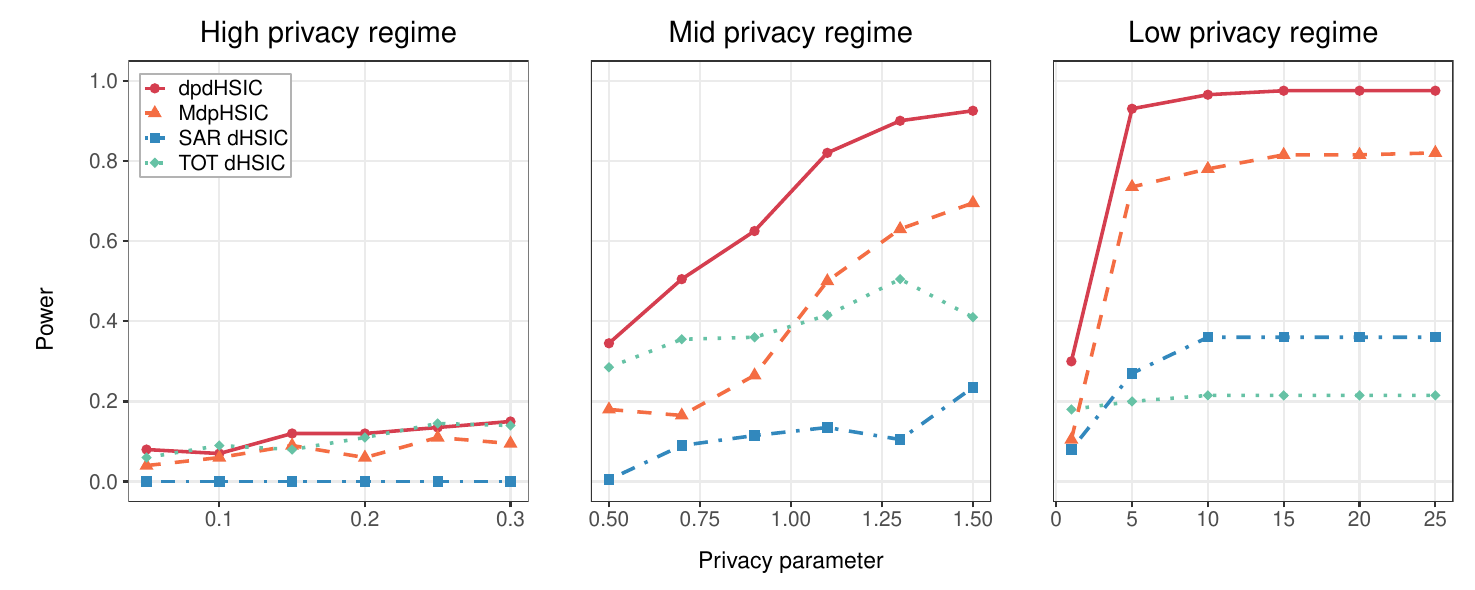}
    \caption{In Simulation~2 with $n=1000$, we vary privacy and the noise level from left to right: $\epsilon$ from $50/n$ to $10/\sqrt{n}$ with $\sigma=2$, $\epsilon$ from $0.5$ to $1.5$ with $\sigma=2$, and $\epsilon$ from $1$ to $25$ with $\sigma=3$.}
    \label{fig_s21}
\end{figure}
\begin{figure}[htbp]
    \centering
\includegraphics[width=1\textwidth]{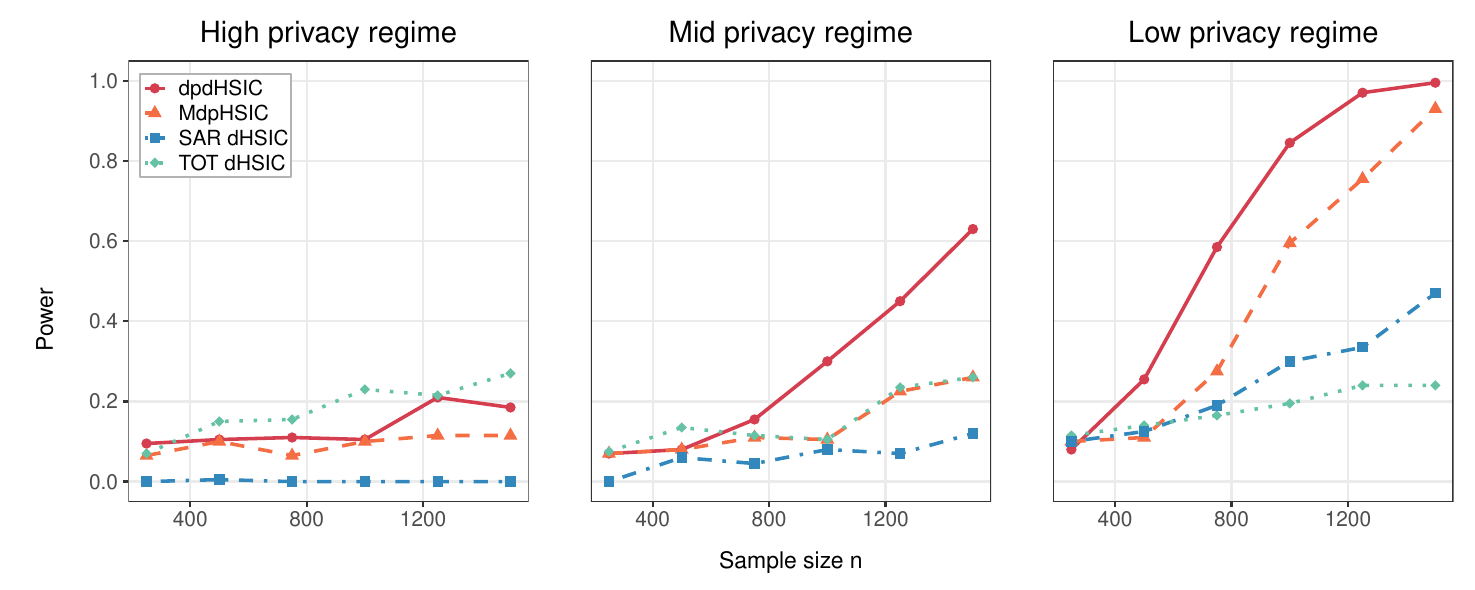}
    \caption{In Simulation~2, we vary the sample size under three privacy and noise settings from left to right: $\epsilon=10/\sqrt{n}$ with $\sigma=2$, $\epsilon=1$ with $\sigma=3$, and $\epsilon=\sqrt{n}/10$ with $\sigma=3$.}
    \label{fig_s22}
\end{figure}

\textbf{Simulation 3 (Marginal dependence).} Consider data $\bX=(X^1,\dots,X^d)\sim N(0,\Sigma_d)$ with a Toeplitz covariance matrix $\Sigma_d$, whose $(i, j)$ entry is $\rho^{|i-j|}$ for $i, j = 1,  \dots, d$. We conduct $200$ repetitions to investigate the empirical powers across different privacy regimes and dimensions $d$. We consider two cases as the dimension varies. The first tests whether the $d$ variables are jointly independent. The second tests whether three vectors are jointly independent, where the components of the three vectors are randomly sampled from the $d$ variables without replacement. 

The empirical powers of the two cases are presented in Figures~\ref{fig_s3} and ~\ref{fig_s32}, respectively. The results are somewhat interesting. One may assume that MdpHSIC outperforms the proposed dpdHSIC, since the dependence structure is primarily driven by marginal dependence. While this assumption holds in the low-privacy regime, the situation becomes more complex in the high and mid-privacy regimes. As noted in the last remark following Proposition~\ref{sen_dhsic}, MdpHSIC faces inherent limitations in the non-private regime and requires additional noise to ensure privacy. These drawbacks are exacerbated under more restrictive privacy constraints, leading to poorer performance in such regimes. 
Consequently, compared to MdpHSIC, our proposed dpdHSIC demonstrates higher power in the high and mid-privacy regimes. 
Similar to Simulation~2, TOT dHSIC performs well under strong privacy constraints, but it lacks power when the privacy constraints are relaxed. Although TOT dHSIC uses the ``optimal'' method to privately aggregate public results in the subsample-and-aggregate framework, it has even lower statistical power than SAR dHSIC in the low-privacy regime. This discrepancy may stem from the different parameter-selection recommendations in the respective original papers.

For the second case, where the $d$ variables are grouped into three sets for testing, the relative performance of the methods remains similar. However, the relationship between dimension and power changes. The power of dpdHSIC and MdpHSIC decreases as $d$ increases, whereas the power of TOT dHSIC and SAR dHSIC increases with $d$. We conjecture that this phenomenon is caused by the difference between the noise-addition mechanisms used by the two groups of methods. As the dimension increases, many weak but dense signals appear. dpdHSIC and MdpHSIC may obscure these signals by directly adding noise to both the test statistic and its permuted counterparts. In contrast, the two sample-splitting methods first perform testing on non-perturbed data and then combine the results privately. This approach may allow weak signals to persist without being completely masked by the added noise.

\begin{figure}[htbp]
    \centering
\includegraphics[width=1\textwidth]{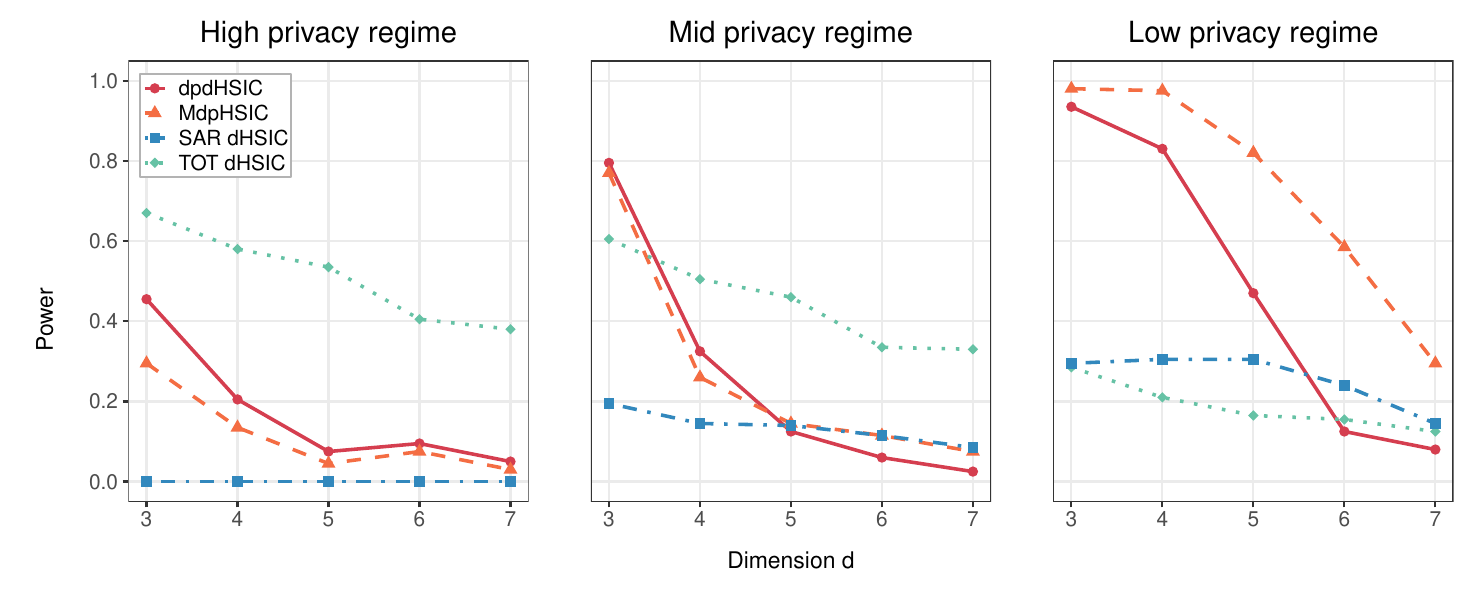}
    \caption{Testing joint independence among $d$ variables in Simulation~3 as the dimension $d$ varies. We set $\alpha=0.05$ and vary the privacy parameter and correlation as follows: (Left) $\epsilon=10/\sqrt{n}$ and $\rho=0.3$. (Middle) $\epsilon=1$ and $\rho=0.3$. (Right) $\epsilon=\sqrt{n}/5$ and $\rho=0.2$.}
    \label{fig_s3}
\end{figure}

\begin{figure}[htbp]
    \centering
\includegraphics[width=1\textwidth]{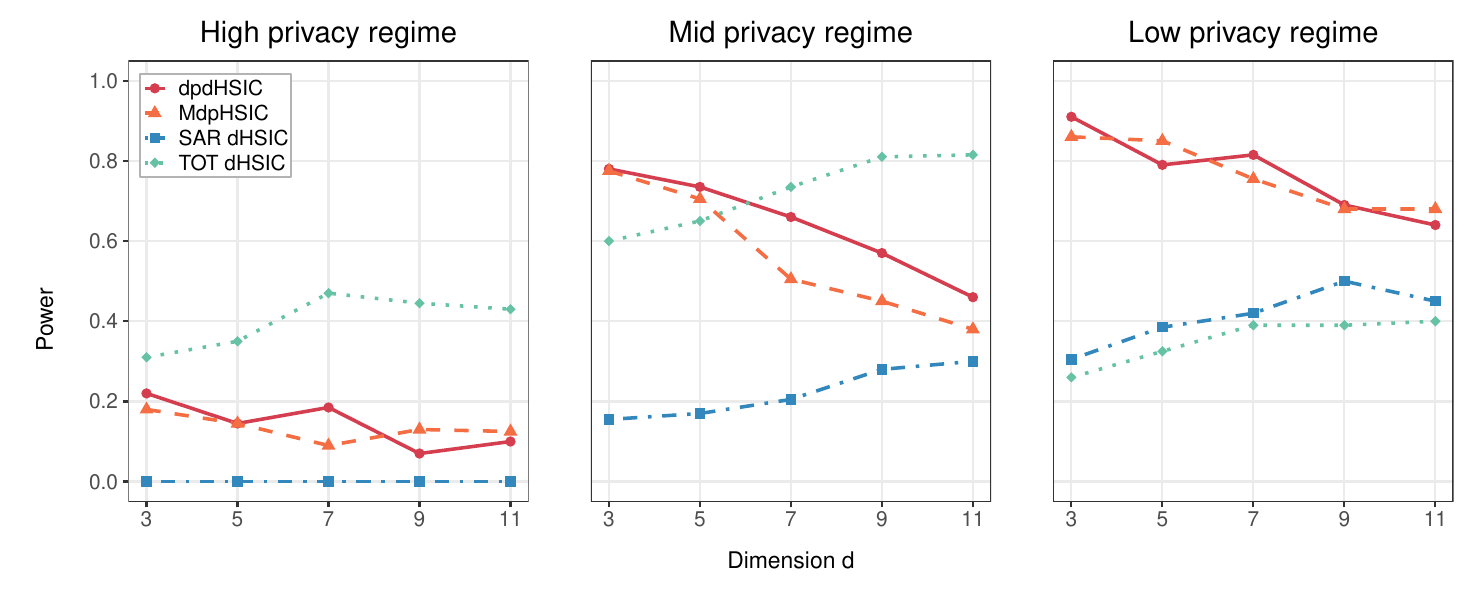}
    \caption{Testing joint independence among three vectors in Simulation~3 as the dimension $d$ varies. We set $\alpha=0.05$ and vary the privacy parameter and correlation as follows: (Left) $\epsilon=10/\sqrt{n}$ and $\rho=0.3$. (Middle) $\epsilon=1$ and $\rho=0.3$. (Right) $\epsilon=\sqrt{n}/10$ and $\rho=0.2$.}
    \label{fig_s32}
\end{figure}


\section{Application to Causal Inference}\label{sec_realdata}
\subsection{Model Diagnostic Checking for Directed Acyclic Graph}\label{sec_61}
In this section, our primary aim is to perform model diagnostic checking for DAGs under privacy constraints. 
To this end, we first review the DAG verification method proposed by \cite{Pfister2018KernelbasedTF}. Following \cite{Peter2014CAM}, assume that the joint distribution is induced by $d$ structural equations
\begin{equation}\label{eq_31}
    X^j:=\sum_{k\in {\rm PA}^j}f^{j,k}(X^k)+N^j,\quad j\in\{1,\dots,d\},
\end{equation}
with ${\rm PA}^j$ being the parents of $j$ in the associated DAG $\mathcal{G}_0$. The noise variables $N^1,\dots,N^d$ are normally distributed and are assumed to be jointly independent. 
Model diagnostic checking for a candidate DAG $\mathcal{G}$ in \eqref{eq_31} assesses the goodness of fit of $\mathcal{G}$, which reduces to testing the joint independence of the residuals from the fitted model \citep{Pfister2018KernelbasedTF}.  
Specifically, given observations $(\bX_1,\dots,\bX_n)$ and candidate DAG $\mathcal{G}$, consider the following procedure. 
\begin{itemize}
    \item Use generalized additive model regression \citep{Wood2002GAMsWI} to regress each node $X^j$ on its parents ${\rm PA}^j$ and denote the resulting residual vector by ${\rm res}^j$.
    \item Perform dHSIC to test whether $({\rm res}^1,\dots,{\rm res}^d)$ is jointly independent.  
    \item If $({\rm res}^1,\dots,{\rm res}^d)$ is not jointly independent, then the DAG $\mathcal{G}$ is rejected. 
\end{itemize}
In the first step, the residuals of the fitted models are tested instead of the true noise values in model \eqref{eq_31}. 
This does not affect the asymptotic ordering of $\hat{\dHSIC}$ \citep[Theorem~E.2]{Pfister2018KernelbasedTF}, and thus does not affect the ordering of dpdHSIC as the added noise is higher-order.  


\subsection{Portuguese Bank Marketing Campaign Data}
The Bank Marketing data from a Portuguese banking institution records direct marketing campaigns conducted by phone calls from $2008$ to $2010$ \citep{Moro2014ADA}. The dataset contains clients' sensitive personal information, such as age, job type, education level, credit default status, account balance, and personal loan status. The original classification goal is to predict whether a client subscribes to a bank term deposit and is also analyzed in \cite{Bi2023DistributionInvariantDP}. The dataset is available at \url{http://archive.ics.uci.edu/ml/datasets/Bank+Marketing#}. 

We use these data to construct a model diagnostic problem for candidate DAGs in private regimes. Specifically, we use a random subsample of size $n=1000$ and focus on five processed variables: 
account balance, debt burden, campaign contacts, last-call duration, and the binary subscription outcome. We consider three candidate DAGs as shown in Figure~\ref{fig_bank_dag}. For each candidate DAG, we adopt the procedure described in Section~\ref{sec_61} to perform model diagnostics under privacy constraints. 

\begin{figure}[htbp] 
    \centering
    \includegraphics[width=0.9\linewidth]{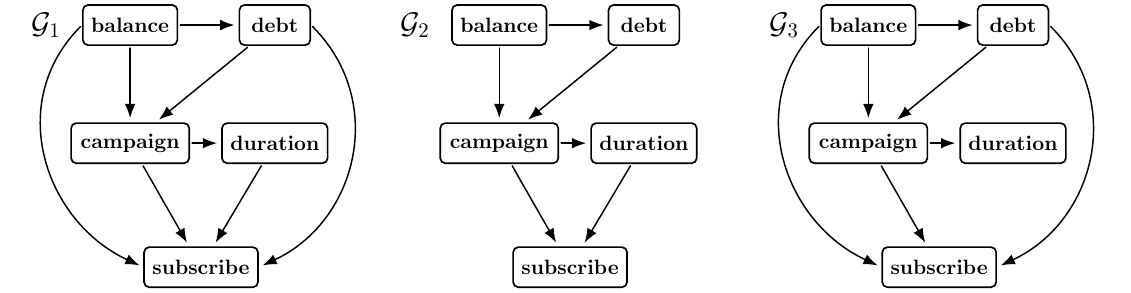} 
    \caption{Three candidate DAGs for the bank dataset.}
    \label{fig_bank_dag}
\end{figure}

Table~\ref{tab_bank_debt} reports empirical rejection rates at level $\alpha=0.05$ for the four methods considered in Section~\ref{sec_simulation}. We use $M=100$ repetitions and $B=500$ permutations in each repetition, and vary $\epsilon\in\{1,3,10\}$ to cover privacy levels from stronger to weaker protection.

In this diagnostic setting, a lower rejection rate indicates less detectable joint dependence among the residuals under a candidate DAG. It should therefore be read as relative support among the proposed working DAGs, rather than as evidence that a candidate is the true data-generating graph. 
For dpdHSIC, the same qualitative ordering appears at all three privacy levels: $\mathcal{G}_1$ has the smallest rejection rate, $\mathcal{G}_2$ is intermediate, and $\mathcal{G}_3$ is rejected most often. This ordering becomes more pronounced as the privacy constraint is relaxed. 
It suggests that the direct debt-related effects help explain part of the dependence structure, and that omitting the call-duration edge creates the largest residual diagnostic conflict. 
Moreover, rejection rates greater than the nominal level imply that this five-variable DAG remains a simplified working model of the Bank Marketing process, but it is the best supported candidate among the three. 

Compared with MdpHSIC, dpdHSIC yields a more consistent result across the considered privacy levels. Its qualitative conclusions are also closer to those of the two dHSIC-based aggregation methods, SAR HSIC and TOT HSIC, which generally favor the same relative ordering of the candidate DAGs. Meanwhile, dpdHSIC retains a clearer gradation among the three candidates, making the relative evidence for the working DAGs easier to interpret under privacy constraints.

\begin{table}[htbp]
    \centering
    \caption{Empirical rejection rates for DAG checking in the Bank Marketing data.}
    \label{tab_bank_debt}
    {\footnotesize
    \begin{threeparttable}
    \begin{tabular}{cccccc}
        \toprule
        $\epsilon$ & Candidate DAG & dpdHSIC & MdpHSIC & SAR dHSIC & TOT dHSIC \\
        \midrule
        \multirow{3}{*}{$1$}
        & $\mathcal{G}_1$ & 0.03 & 0.10 & 0.06 & 0.07 \\
        & $\mathcal{G}_2$ & 0.06 & 0.04 & 0.04 & 0.11 \\
        & $\mathcal{G}_3$ & 0.08 & 0.10 & 0.07 & 0.17 \\
        \midrule
        \multirow{3}{*}{$3$}
        & $\mathcal{G}_1$ & 0.07 & 0.18 & 0.09 & 0.07 \\
        & $\mathcal{G}_2$ & 0.13 & 0.14 & 0.13 & 0.10 \\
        & $\mathcal{G}_3$ & 0.31 & 0.33 & 0.24 & 0.26 \\
        \midrule
        \multirow{3}{*}{$10$}
        & $\mathcal{G}_1$ & 0.15 & 0.82 & 0.15 & 0.12 \\
        & $\mathcal{G}_2$ & 0.74 & 0.88 & 0.17 & 0.18 \\
        & $\mathcal{G}_3$ & 0.99 & 1.00 & 0.38 & 0.22 \\
        \bottomrule
    \end{tabular}
    \end{threeparttable}}
\end{table}

\subsection{Pima Indians Diabetes Data}
In clinical treatment scenarios, patients may be particularly sensitive to the privacy of health-related measurements, especially body indices that are associated with specific diseases. 
We consider the Pima Indians Diabetes dataset, which contains observations on women who were at least $21$ years old, of Pima Indian heritage, and residing near Phoenix, Arizona. These individuals were tested for diabetes according to the World Health Organization criteria. The dataset, available from the \url{https://www.kaggle.com/datasets/uciml/pima-indians-diabetes-database}, was originally collected by the U.S. National Institute of Diabetes and Digestive and Kidney Diseases. 
Following the analysis of \cite{Chakraborty2019DistanceMF}, we focus on five variables: age, body mass index (BMI), 2-hour serum insulin (SI), plasma glucose concentration (glu), and diastolic blood pressure (DBP). After removing observations with zero values, $n=392$ observations remain.

\begin{figure}[htbp] 
    \centering
    \includegraphics[width=0.6\linewidth]{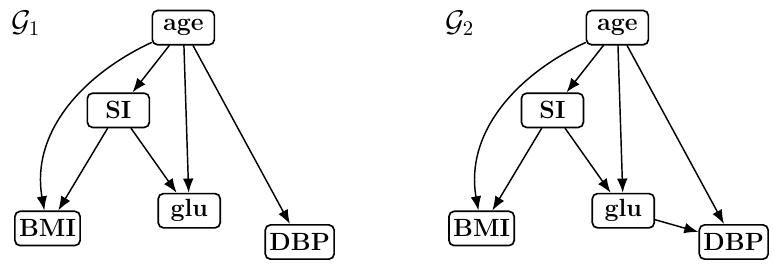} 
    \caption{Two candidate DAGs for the diabetes data.}
    \label{fig_dag}
\end{figure}

Our aim is to perform model diagnostic checking for candidate DAGs under privacy constraints. The two DAGs in Figure~\ref{fig_dag} are the two most plausible candidates as demonstrated in  \cite{Chakraborty2019DistanceMF}. They differ only by the presence of an additional edge from glu to DBP, which represents a possible direct effect of plasma glucose concentration on diastolic blood pressure. 
We perform $1000$ repetitions and use $B=500$ permutations in each repetition to estimate the empirical rejection rate at level $\alpha=0.05$. Results of the four methods are reported in Figure~\ref{fig_real}. 
For dpdHSIC, both candidate DAGs have rejection rates close to the nominal level, suggesting that neither graph exhibits strong residual joint dependence after the DAG-based residual adjustment. Moreover, dpdHSIC remains the closest to the nominal level across the privacy budgets considered here. This conclusion is consistent with the non-private analysis of \cite{Chakraborty2019DistanceMF}. 

\begin{figure}[htbp] 
    \centering
    \includegraphics[width=0.6667\linewidth]{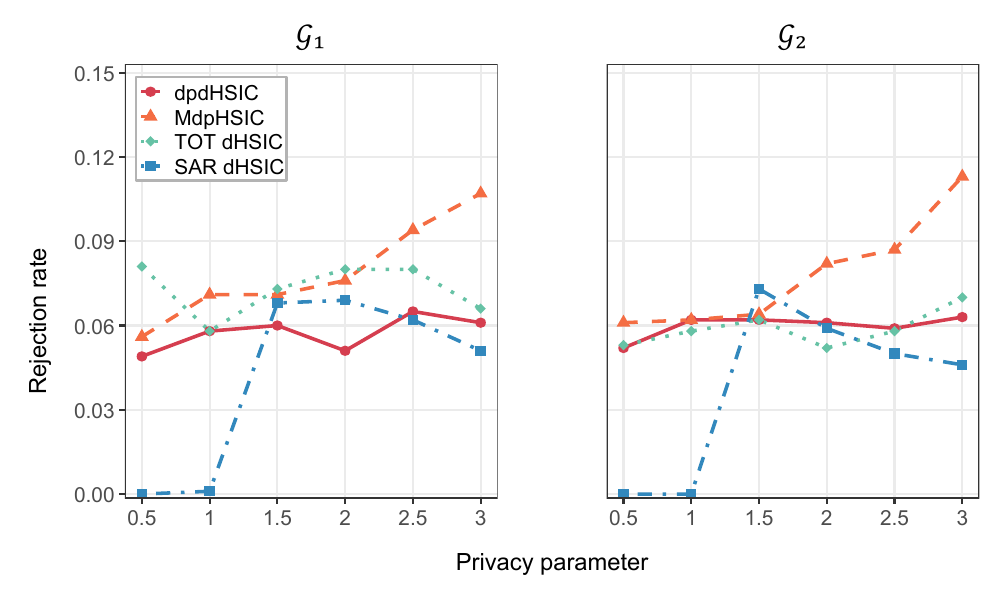} 
    \caption{Empirical rejection rates of the two DAGs in the real data analysis. The left panel reports rejection rates for DAG $\mathcal{G}_1$, and the right panel reports rejection rates for DAG $\mathcal{G}_2$.}
    \label{fig_real}
\end{figure}

\section{Conclusion and Discussion} \label{sec_discussion}
In this work, we propose a DP joint independence test based on dHSIC. Given that the limiting distribution of the empirical estimate of dHSIC is a complex Gaussian chaos, we consider two resampling methods, permutation and bootstrap under privacy constraints. 
We show  that the proposed permutation-based dpdHSIC enjoys DP guarantees, a non-asymptotic level and pointwise consistency, while the bootstrap counterpart suffers from power inconsistency. We further investigate the uniform power of the proposed dpdHSIC in terms of dHSIC metric and $L_2$ metric. The minimum separation of the proposed test matches the minimax separation up to constant factors across different privacy regimes. Numerical simulations and real data analysis validate our theoretical findings. 

Although the DP bootstrap for dHSIC suffers from intractable sensitivity, it shares the same sensitivity as permutation for MMD \citep{Gretton2012AKT}. We conjecture that the large sensitivity is caused by the tensor product kernel. The further comparison between DP bootstrap and DP permutation would be an interesting direction for future investigation. 

\bibliographystyle{apalike}
\bibliography{DPJointIT}
\newpage

\begin{appendices}
\label{app}
\allowdisplaybreaks[3]

\renewcommand{\theequation}{\thesection.\arabic{equation}}

\setcounter{table}{0}
\renewcommand{\thetable}{\thesection\arabic{table}}
\setcounter{figure}{0}
\renewcommand{\thefigure}{\thesection\arabic{figure}}
\setcounter{algorithm}{0}
\renewcommand{\thealgorithm}{\thesection\arabic{algorithm}}

\section{Overview of Appendix}
This supplementary material includes technical details and proofs omitted from the main text. The remaining material is organized as follows. 
\begin{itemize}
    \item Appendix~\ref{sec_pre} reviews the basic concepts of differential privacy and reproducing kernel Hilbert spaces used throughout the paper.
    \item Appendix~\ref{sec_u_dhsic} presents the theoretical results for the U-statistic version of dHSIC, including its sensitivity and its separation behavior in the dHSIC and $L_2$ metrics.
    \item Appendix~\ref{app_technical_proof} contains the proofs for the main results. Specifically, Appendix~\ref{app_lemma_dp_re} proves the properties of the DP resampling test; Appendix~\ref{app_pr_sen} proves the sensitivity results for permutation dHSIC; Appendix~\ref{app_property_dpdH} proves the properties of the dpdHSIC test; Appendices~\ref{app_a23} and~\ref{app_a24} prove the negative results for bootstrap dHSIC; and Appendices~\ref{app_th_mini_sepa_dpdhsic},~\ref{app_th_minimax_sepa} and~\ref{app_th_L2separation} prove the separation results for dpdHSIC.
    \item Appendix~\ref{app_proof_u} gives the proofs for the theoretical results on the U-statistic version of dHSIC.
    \item Appendix~\ref{app_lemma} collects auxiliary lemmas, including lemmas on resampling tests in Appendix~\ref{app_lemma_re} and lemmas on dHSIC in Appendix~\ref{app_le_dhsic}.
\end{itemize}

\section{Preliminaries}\label{sec_pre}
\subsection{Brief Review of Differential Privacy}\label{sec_DP}
We briefly review some basic concepts and properties of differential privacy \citep{Dwork2006CalibratingNT,Dwork2014TheAF}. It is a probabilistic property of randomized algorithms that limits the information that can be learned when replacing an individual in the dataset. 
Differential privacy with additional auxiliary variables requires the probabilistic property hold for all possible values of auxiliary variable. In the paper, the resampling procedure can be treated as the auxiliary variables independent of the dataset. 

\begin{definition}[Differential Privacy \citep{Dwork2014TheAF}]
\label{def_dp}
    Let $\epsilon > 0$ and $\delta \in [0, 1)$. 
     A randomized algorithm $\mathcal{M}$ satisfies $(\epsilon, \delta)$-DP if, for all $S \in \text{range}(\mathcal{M})$, any additional auxiliary variable $w \in \mathcal{W}$ and all input datasets $\mcX_n$ and $\tilde{\mcX}_n$ with $d_{\ham}(\mcX_n, \tilde{\mcX}_n) \leq 1$, we have
     $$
     \mathbb{P}(\mathcal{M}(\mcX_n; w) \in S \mid \mcX_n, w) \leq e^\epsilon \mathbb{P}(\mathcal{M}(\tilde{\mcX}_n; w) \in S \mid \tilde{\mcX}_n, w) + \delta.
     $$ 
\end{definition}

The $\epsilon$ and $\delta$ in Definition~\ref{def_dp} are referred to as privacy parameters. 
The smaller $\epsilon$ and $\delta$ indicate the stricter privacy guarantees, where the probability distributions of $\mathcal{M}(\mcX_n; w)$ and $\mathcal{M}(\tilde\mcX_n; w)$ are forced to be similar, so the output of $\mathcal{M}$ is not very sensitive to small changes in $\mcX_n$. 
It is usually desirable to take $\delta$ approaching $0$ to ensure higher privacy. 
When $\delta=0$ and without auxiliary variables, we retrieve $\epsilon$-differential privacy, which is the most popular formalization of privacy in the literature. 

We review that the composition theorem \citep[Theorem~3.16]{Dwork2014TheAF}, which presents the overall privacy guarantees for a composition of multiple DP mechanisms and serves as a foundation for combining DP pairwise independence testing results. 

\begin{lemma}[Composition]\label{lemma_com}
    Suppose that each algorithm $\mathcal{M}_i$ is $(\epsilon_i, \delta_i)$-DP for $i \in [m]$. Then, the composed algorithm $\mathcal{M}_{1:m}$ defined as $\mathcal{M}_{1:m} := (\mathcal{M}_1, \dots, \mathcal{M}_m)$ is $(\sum_{i=1}^m \epsilon_i, \sum_{i=1}^m \delta_i)$-DP.
\end{lemma}

In this article, $\mathcal{M}(\mcX)$ are perturbed versions of confidential statistics $T(\mcX)$.
A keystone to guarantee differential privacy is the sensitivity \citep{Dwork2014TheAF}, which determines the scaling factor of perturbation. 
As a generalization of commonly used definition in the literature, the sensitivity with additional auxiliary variables is stated as follows. 

\begin{definition}[$\ell_p$-Sensitivity]
    Consider a function $T:\mcX_n\to\mathbb{R}^r$ and an additional auxiliary variable $w \in \mathcal{W}$. For $p \geq 1$, the global $\ell_p$-sensitivity of $T$ is defined as
    $$
    \Delta_T^p := \sup_{w \in \mathcal{W}} \sup_{d_{\ham}(\mcX_n, \tilde{\mcX}_n) \leq 1} 
    \|T(\mcX_n; w) - T(\tilde{\mcX}_n; w)\|_p.
    $$
\end{definition}

The most commonly used mechanisms to guarantee differential privacy may be the Laplace mechanism and Gaussian mechanisms \citep{Dwork2006CalibratingNT}. 
In this paper, there is little difference between the two mechanisms, as the outcome of $T$ is also one-dimensional. Therefore we simply focus on Laplace mechanism working with $\ell_1$-sensitivity, and $\Delta_T^1$ can be abbreviated as $\Delta_T$. 
The Laplace mechanism is formulated as follows.

\begin{lemma}[The Laplace Mechanism]\label{Laplace}
    For $T:\mcX_n\to\mathbb{R}^r$ and an additional auxiliary variable $w \in \mathcal{W}$, the following mechanism $\mathcal{M}$ is $(\epsilon,\delta)$-DP:
    $$
    \mathcal{M}(\mcX_n;w) = T(\mcX_n;w) +\frac{\Delta_T}{\xi_{\epsilon,\delta}} (\zeta_1, \ldots, \zeta_r)^T,
    $$
    where $\zeta_i \overset{i.i.d.}{\sim}\Lap(0,1)$ generated independently of $\mcX_n$.
\end{lemma}

\subsection{Brief Review of Reproducing Kernel Hilbert Spaces}\label{sec_embed}
We briefly review RKHSs and mean embeddings here; see \cite{2004Reproducing} for a comprehensive treatment. 
For a reproducing kernel $k:\mbX\times\mbX\to\mbR$ defined on a separable topological space $\mbX$, let $\mcH_k$ be a RKHS endowed with $k$. 
It is possible to embed complicated objects into an RKHS and to analyze them by using the Hilbert space structure. 
\cite{Pfister2018KernelbasedTF} embedded probability distributions in an RKHS, where the mean embedding associated with $k$ is defined as the function 
$$
\Pi(\mu):=\int_{\mbX}k(x,\cdot)\mu(\mathrm{d}x)\in\mcH_k. 
$$
To infer that two distributions are equal given that their embeddings coincide, it is necessary that the mean embedding is injective, where such a kernel is called characteristic. A kernel $k$ is said to be \emph{translation invariant}, if there exists a symmetric positive definite function $\kappa$ such that $k(x,x')=\kappa(x-x')$ for all $x,x'\in \mbX$. Assuming that $0\leq k(x,x')\leq K$ for all $x,x'\in\mbX$, then the kernel $k$ has \emph{non-empty level} sets on $\mbX$ if, for any $\varepsilon\in(0,K)$, there exist $x,x'\in\mbX$ such that $k(x,x')\leq\varepsilon$. Both the Gaussian kernel and the Laplace kernel have non-empty level sets on $\mbR^d$ due to the continuity of the kernel function.

In addition, we define product kernels on $\mbX^1\times\dots\times\mbX^d$.
For $j\in[d]$, let $k^j:\mbX^j\times\mbX^j\to\mbR$ be a continuous, bounded, positive semi-definite kernel and denote by $\mcH^j$ the corresponding RKHS. Throughout the paper, a superscript $j\in[d]$ always denotes an index rather than an  exponent. Assume that the tensor product
$\bk:=\otimes_{j=1}^d k^j$ is characteristic, which is defined as $\otimes_{j=1}^d k^j((x^1,x'^{1}),\dots, (x^d,x'^{d}))=\prod_{j=1}^dk^j(x^j,x'^{j})$ for all $x^j,x'^{j}\in \mbX^j$ and $j\in[d]$. Let $\mcH_{\bk}=\mcH^1\otimes\dots\otimes\mcH^d$ be the projective tensor product of the RKHSs $\mcH^j$. 

\section{Theoretical Results on the U-statistic}\label{sec_u_dhsic}
U-statistics often enjoy the optimal separation rate in non-private regimes \citep{Albert2022AdaptiveTO,Kim2022MinimaxOO}. In this section, we explore the private dHSIC framework based on a U-statistic, which exhibits sub-optimal separation performance in some cases, especially in high-privacy regimes.

To begin with, we recall the closed form of the dHSIC U-statistic, which is an unbiased estimator of the squared population dHSIC in \eqref{df_dhsic} and given by
\begin{align*}
    U_{\dHSIC} :=& \frac{(n-2)!}{n!} \sum_{\bi_2^n} \prod_{j=1}^d
    k^j(X_{i_1}^j, X_{i_2}^j)  
    + \frac{(n-2d)!}{n!} \sum_{\bi_{2d}^n} \prod_{j=1}^d
    k^j(X_{i_{2j-1}}^j, X_{i_{2j}}^j) \nonumber\\
    &- \frac{2(n-d-1)!}{n!} \sum_{\bi_{d+1}^n} \prod_{j=1}^d
k^j(X_{i_1}^j, X_{i_{j+1}}^j).
\end{align*}
The following lemma computes the sensitivity of the permutation-based $U_{\dHSIC}$.
\begin{lemma}\label{le_sen_u}
    Assume that the kernels $k^j$  are bounded as $0\leq k^j(x,x')\leq K^j$ for all $x,x'\in \mbX^j$ and $j\in[d]$. 
    In addition, assume that $k^j$ are translation invariant and have non-empty level sets on $\mbX^j$. Then there exists a positive sequence $c_n \in [2, 4d^2+4d]$ such that for all $n \geq 2d$,
$$
\sup_{\bpi\in\bPi_n} \sup_ {\substack{\mcX_n, \widetilde{\mcX}_n:\\ d_{{\ham}}(\mcX, \widetilde{\mcX})\leq1}} | U_{\dHSIC}(\mcX^{\bpi}_n) - U_{\dHSIC}(\tilde\mcX^{\bpi}_n) | = \frac{c_n}{n}\prod_{j=1}^dK^j.
$$
\end{lemma}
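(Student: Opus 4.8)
The plan is to prove the two-sided bound $\tfrac{2}{n}\prod_{j=1}^d K^j \le \Delta \le \tfrac{4d^2+4d}{n}\prod_{j=1}^d K^j$, where $\Delta$ abbreviates the supremum appearing on the left of the claimed identity, and then simply to \emph{define} $c_n := n\Delta/\prod_{j=1}^d K^j$; this $c_n$ automatically lies in $[2,4d^2+4d]$ and turns the two bounds into the stated equality. The common starting point, exactly as in the proof of Proposition~\ref{sen_dhsic}, is that once the resampling $\bpi=(\pi^1,\dots,\pi^d)$ is applied, replacing a single observation $\bX_i$ by $\tilde\bX_i$ alters the permuted array only in the $d$ cells $(a_j,j)$ with $a_j := (\pi^j)^{-1}(i)$. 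The essential feature for the $U$-statistic is that the rows $a_1,\dots,a_d$ need \emph{not} coincide, so one sample change is spread over up to $d$ distinct rows of the permuted array; the hypothesis $n\ge 2d$ guarantees all index sets are nonempty and that these rows can be taken distinct. This scattering is what produces the $d^2$ in the upper bound and what a non-identity permutation exploits in the lower bound.

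For the upper bound I would expand $U_{\dHSIC}$ into its three averages over $\bi_2^n$, $\bi_{2d}^n$ and $\bi_{d+1}^n$, note that a summand changes only when its index tuple contains one of the affected rows $a_j$, and bound the change of each such summand by $\prod_{j=1}^d K^j$, since every kernel product lies in $[0,\prod_{j=1}^d K^j]$. Counting, for each of the $d$ affected rows, all tuples that meet that row in \emph{any} of its positions, the three averages contribute at most $\tfrac{2d}{n}$, $\tfrac{2d^2}{n}$ and $\tfrac{2d(d+1)}{n}$ times $\prod_{j=1}^d K^j$ respectively, the factors $2d$ and $d+1$ being the number of positions in the $\bi_{2d}^n$- and $\bi_{d+1}^n$-sums; these add to $\tfrac{4d^2+4d}{n}\prod_{j=1}^d K^j$. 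The count is deliberately crude (a coordinate-aware count gives only $O(d/n)$), but the crude version yields precisely the stated constant.

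The lower bound is the main obstacle. The naive attempt, perturbing one point against a constant or single-outlier background, fails completely: there $U_{\dHSIC}\equiv 0$, the three terms cancel identically, and the sensitivity contribution is zero. I would therefore use a genuinely dependent background together with a non-identity $\bpi$. Using translation invariance to obtain $k^j(x,x)=\kappa^j(0)=K^j$ on the diagonal, and the non-empty level-set property to choose, for each $j$, points $z^j$ and $w^j$ with $k^j(z^j,w^j)$ arbitrarily small, take the permuted background to be the perfectly correlated balanced binary configuration, in which every row belongs to one of two equal-size groups and carries the matching value $z^j$ or $w^j$ in all $d$ coordinates. Selecting $\bpi$ so that $a_1,\dots,a_d$ are distinct, the single-sample change is realized as $d$ single-cell flips, one per coordinate at a distinct row, each sending that coordinate to the opposite group. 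A direct computation then shows that each flip shifts the $\bi_2^n$-average by $-\tfrac1n\prod_{j=1}^d K^j + o(1/n)$ while perturbing the other two averages by only $O(1/n^2)$; this suppression is the crucial point and holds because a balanced flip leaves the total coordinatewise kernel mass unchanged, so the cancellation seen in the degenerate case does not recur. Since the flips share sign and interact only at lower order, their contributions add to a net change of order $\tfrac{d}{n}\prod_{j=1}^d K^j \ge \tfrac{2}{n}\prod_{j=1}^d K^j$ in the limit $k^j(z^j,w^j)\to 0$, which gives $c_n\ge 2$ and, together with the upper bound, the claim.
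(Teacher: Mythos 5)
Your upper bound is correct and is essentially the paper's own argument: decompose $U_{\dHSIC}$ into the three sums over $\bi_2^n$, $\bi_{2d}^n$, $\bi_{d+1}^n$, observe that one observation change touches at most $d$ rows of the permuted array, count the affected tuples, and bound each summand change by $K_0:=\prod_{j=1}^d K^j$; the contributions $2d/n$, $2d^2/n$, $2d(d+1)/n$ sum to $(4d^2+4d)/n$, exactly as in the paper. For the lower bound the paper takes a different and much shorter route than yours: it augments the extremal pair for $d-1$ variables by a constant $d$-th coordinate (translation invariance gives $k^d(x,x)=K^d$, so on such data $U_{\dHSIC}$ for $d$ variables equals $K^d$ times $U_{\dHSIC}$ for $d-1$ variables), which yields $\Delta_T^d\geq K^d\Delta_T^{d-1}$, and then iterates down to $d=2$, where the sharp two-variable bound is quoted from Lemma~9 of \cite{Kim2023DPP}. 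Your balanced-binary construction with a scattering permutation is a genuinely different, self-contained idea, and its two structural observations are sound: degenerate backgrounds give $U_{\dHSIC}\equiv 0$, and in the balanced configuration the $\bi_{2d}^n$- and $\bi_{d+1}^n$-averages move only at order $1/n^2$ while each flip moves the $\bi_2^n$-average by roughly $-K_0/n$.

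The gap is that this asymptotic accounting cannot deliver the constant $2$ that the lemma asserts for \emph{every} $n\geq 2d$. Your conclusion is a change of size $\tfrac{d}{n}K_0+O(K_0/n^2)$, and ``of order $d/n$'' is not the same as ``$\geq 2K_0/n$''. For $d=2$ the main term sits exactly at the borderline: one flip only removes the pairs between the flipped row and its own (half-size) group, so the two flips change the $\bi_2^n$-average by $-\frac{(2n-6)K_0}{n(n-1)}$, which is \emph{strictly smaller} in magnitude than $\frac{2K_0}{n}$; whether the total beats $2K_0/n$ is then decided entirely by the signs and sizes of the $O(1/n^2)$ contributions of the other two sums, which you never compute. (An exact finite-$n$ computation shows they do rescue the bound for even $n$ --- at $n=6$, $d=2$ the $\bi_{d+1}^n$-term actually contributes more to the change than the $\bi_2^n$-term, contradicting the suppression hierarchy your argument rests on --- but that computation is precisely what is absent from your proof.) The same problem recurs for every $d$ when $n$ is close to $2d$, where the asymptotics have no force, and for odd $n$, where the perfectly balanced configuration does not exist; note also that the limit you invoke, $k^j(z^j,w^j)\to 0$, addresses none of this, since the obstruction is the $n$-dependence of the error terms, not the $\varepsilon$-dependence. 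To close the gap you would either have to carry out the exact evaluation of all three sums for your configuration at every $n\geq 2d$, or argue as the paper does: reduce to the two-variable case by constant-coordinate augmentation and invoke the known $d=2$ sensitivity lower bound.
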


It is worth highlighting that the U-statistic is an estimator of $\dHSIC^2_{\bk}$, while the V-statistic is an estimator of $\dHSIC_{\bk}$. As a result, the impact of the $n^{-1}$ order sensitivity on their power is different. 
We denote the test function using $U_{\dHSIC}$ based on Algorithm~\ref{al_dp_per} and Lemma~\ref{le_sen_u} by $\phi^u_{\dpdHSIC}$. We next devote attention to the power analysis of $\phi^u_{\dpdHSIC}$. To begin with, we consider the minimum separation in the dHSIC metric. The following theorem shows that $\phi^u_{\dpdHSIC}$ fails to achieve the minimax separation rate over $\mathcal{P}_{\dHSIC_{\bk}}(\rho)$.

\begin{theorem}[Sub-optimality of $\phi^u_{\dpdHSIC}$ against dHSIC Alternatives]\label{th_a1}
Assume that the kernels $k^j$ fulfill the conditions specified in Lemma~\ref{le_sen_u}. Moreover, assume that if $P_{X^1,\dots, X^d} \in \mcP_{\mbX^1\times\dots\times\mbX^d}$, then $wP_{X^1,\dots, X^d} + (1-w)P_{X^1}\dots P_{X^d} \in \mcP_{\mbX^1\times\dots\times\mbX^d}$ for all $w \in [0, 1]$, and there exist $P_{X^1,\dots, X^d} \in \mcP_{\mbX^1\times\dots\times\mbX^d}$ such that $\dHSIC_{\bk}(P_{X^1,\dots, X^d}) = \varrho_0 > 0$ for some fixed $\varrho_0 > 0$. Let $\alpha \in ((B+1)^{-1}, 1)$, $\beta \in (0, 1-\alpha)$ be fixed values. Consider the high-privacy regime where $\xi_{\epsilon, \delta} \asymp n^{-1/2-r}$ with fixed $r \in (0, 1/2)$, for $\xi_{\epsilon, \delta}$. Then the uniform power of $\phi^u_{\dpdHSIC}$ is asymptotically at most $\alpha$ over $\mcP_{\dHSIC_{\bk}}(\rho)$ where
\begin{equation} \label{eq_32} 
\rho = \log(n) \times \max \left\{ \sqrt{\frac{\max\{\log(1/\alpha), \log(1/\beta)\}}{n}}, \frac{\max\{\log(1/\alpha), \log(1/\beta)\}}{n\xi_{\epsilon, \delta}} \right\}.
\end{equation}
In other words, it holds that
$$
\limsup_{n\to\infty} \inf_{P_{X^1,\dots, X^d} \in \mcP_{\dHSIC_{\bk}}(\rho)} \mathbb{E}_{P_{X^1,\dots, X^d}}[\phi^u_{\dpdHSIC}] \le \alpha.
$$
\end{theorem}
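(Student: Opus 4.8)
The plan is to exhibit a single sequence of alternatives lying in $\mcP_{\dHSIC_{\bk}}(\rho)$ against which $\phi^u_{\dpdHSIC}$ is asymptotically powerless, so that the infimum of the power over the whole class is asymptotically no larger than $\alpha$. I would build this alternative as a mixture. Fix $P^\star \in \mcP_{\mbX^1\times\dots\times\mbX^d}$ with $\dHSIC_{\bk}(P^\star) = \varrho_0$ and write $\bar P := \otimes_{j=1}^d P^\star_{X^j}$ for the product of its marginals. For $w\in[0,1]$ put $Q_w := wP^\star + (1-w)\bar P$, which lies in the model by the stated convexity assumption. Since $\bar P$ has the same one-dimensional marginals as $P^\star$, so does $Q_w$, whence the product of the marginals of $Q_w$ is again $\bar P$. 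Because dHSIC in the MMD form \eqref{df_dhsic} equals the RKHS distance between the embedding of the joint law and that of the product of its marginals, and the mean embedding is linear in the distribution, this yields $\dHSIC_{\bk}(Q_w) = w\,\dHSIC_{\bk}(P^\star) = w\varrho_0$. Taking $w = w_n := \rho/\varrho_0$ with $\rho$ as in \eqref{eq_32} places $Q_{w_n}$ on the boundary of $\mcP_{\dHSIC_{\bk}}(\rho)$; since $\rho\to0$ we have $w_n\in[0,1]$ for all large $n$.

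The heart of the argument is a signal-to-noise comparison exploiting the fact that $U_{\dHSIC}$ is unbiased for the \emph{squared} criterion. By Lemma~\ref{le_sen_u} the sensitivity is $\Delta_T\asymp n^{-1}$, so each $M_i$ in Algorithm~\ref{al_dp_per} carries Laplace noise of scale $2\Delta_T\xi_{\epsilon,\delta}^{-1}\asymp (n\xi_{\epsilon,\delta})^{-1}\asymp n^{r-1/2}$ in the high-privacy regime $\xi_{\epsilon,\delta}\asymp n^{-1/2-r}$. Meanwhile the signal carried by the unperturbed statistic is $\mbE_{Q_{w_n}}[U_{\dHSIC}] = \dHSIC^2_{\bk}(Q_{w_n}) = \rho^2\asymp \log^2(n)\,n^{2r-1}$, and since $2r-1 < r-\tfrac12$ for $r\in(0,\tfrac12)$ we obtain $\rho^2 = o(n^{r-1/2})$: the signal is of strictly smaller order than the noise. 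This is exactly the role of the $\log(n)$ inflation in \eqref{eq_32}, which is large enough to exceed the V-statistic's threshold $n^{r-1/2}$ yet small enough that its square stays below the noise floor.

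It then remains to show that the fluctuations of all resampled statistics are likewise negligible relative to the noise. Writing $M_i = 2\Delta_T\xi_{\epsilon,\delta}^{-1}\bigl(\zeta_i + \tfrac{\xi_{\epsilon,\delta}}{2\Delta_T}U_{\dHSIC}(\mcX^{\bpi_i}_n)\bigr)$, it suffices to prove $\xi_{\epsilon,\delta}\Delta_T^{-1}U_{\dHSIC}(\mcX^{\bpi_i}_n) = o_p(1)$ uniformly over $i\in\{0\}\cup[B]$. Because the kernels are bounded, the permuted copies concentrate at rate $O_p(n^{-1/2})$ (delivered by boundedness, or faster via the concentration inequality for the permuted U-statistic, Lemma~\ref{le_u_dhsic_con_per}) around a mean equal to $\rho^2$ for $i=0$ and negligible for the permuted indices; both $\rho^2$ and $n^{-1/2}$ are $o(n^{r-1/2})$ since $r>0$. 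Dividing every $M_i$ by the common scale $2\Delta_T\xi_{\epsilon,\delta}^{-1}$—which leaves each comparison $\mathbbm{1}(M_i\ge M_0)$ invariant—therefore gives $\zeta_i + o_p(1)$, so the rescaled vector converges to the i.i.d.\ Laplace vector $(\zeta_0,\dots,\zeta_B)$. Consequently $\hat p_{\DP}$ converges in distribution to $\tfrac{1}{B+1}\{\sum_{i=1}^B\mathbbm{1}(\zeta_i\ge\zeta_0)+1\}$, whose law is exactly the null permutation law, and the validity computation in Lemma~\ref{lemma_dp_re}(ii) gives $\mbP_{Q_{w_n}}(\hat p_{\DP}\le\alpha)\to \lfloor(B+1)\alpha\rfloor/(B+1)\le\alpha$. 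Since $Q_{w_n}\in\mcP_{\dHSIC_{\bk}}(\rho)$, bounding the infimum over $P$ by the value at $Q_{w_n}$ yields the stated $\limsup$ bound.

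The main obstacle will be the uniform-in-$i$ control of the permuted U-statistics at the $o(n^{r-1/2})$ scale, together with the passage to the joint limit: one must argue that the original statistic and the $B$ permuted copies concentrate fast enough \emph{simultaneously}, and that the convergence of $\hat p_{\DP}$ survives the joint randomness of the data, the permutations, and the Laplace noise. I expect boundedness of the kernels together with Lemma~\ref{le_u_dhsic_con_per} to supply the required concentration; the delicate point is assembling the joint convergence of $(M_0,\dots,M_B)$ cleanly, since the pairwise statement $\mbP(M_0\le M_1)\to\tfrac12$ alone would only give inconsistency through Lemma~\ref{lemma_dp_re}(iii), and upgrading this to the sharp ``power at most $\alpha$'' conclusion requires the full asymptotic exchangeability of the perturbed statistics.
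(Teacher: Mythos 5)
Your proposal is correct and follows essentially the same route as the paper: the paper's proof verifies the same concentration ingredients (via $|U_{\dHSIC}-V_{\dHSIC}|\lesssim K_0/n$ and Lemma~\ref{le_ineq_e_dhsic}) and then defers the core argument to the proof strategy of Theorem~10 in \cite{Kim2023DPP}, which is precisely your mixture construction $Q_w = wP^\star+(1-w)\bar P$ with $\dHSIC_{\bk}(Q_w)=w\varrho_0$, the signal-versus-noise comparison $\rho^2 = o\bigl((n\xi_{\epsilon,\delta})^{-1}\bigr)$ exploiting that $U_{\dHSIC}$ estimates the \emph{squared} metric, and the joint convergence of the rescaled $(M_0,\dots,M_B)$ to the i.i.d.\ Laplace vector giving limiting rejection probability $\lfloor(B+1)\alpha\rfloor/(B+1)\le\alpha$. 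Your worry about assembling the joint limit is not a real obstacle since $B$ is fixed, so finitely many $o_p(1)$ perturbations and the a.s.\ absence of ties among the $\zeta_i$ suffice.
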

It is clear that $\phi^u_{\dpdHSIC}$ is not minimax optimal in dHSIC metric as $\rho/\rho^*_{\dHSIC}\to\infty$ as $n\to\infty$. The sub-optimality is mainly caused by larger sensitivity and additional noise. Roughly speaking, $\phi^u_{\dpdHSIC}$ requires the signal $\dHSIC^2_{\bk}\gtrsim(n\xi_{\epsilon,\delta})^{-1}$ for consistent power, i.e. $\dHSIC_{\bk}\gtrsim(n\xi_{\epsilon,\delta})^{-1/2}$. Otherwise, the signal is governed by the added Laplace noise. However, the minimax separation in the high-privacy regime requires signal $\dHSIC_{\bk}\gtrsim\min\{(n\xi_{\epsilon,\delta})^{-1},1\}$ by Theorem~\ref{th_minimax_sepa}, which is less than $(n\xi_{\epsilon,\delta})^{-1/2}$. 

Next we move our attention to $L_2$ alternatives. Denote the minimum separation of $\phi^u_{\dpdHSIC}$ with Gaussian kernel over ${\mcP}^s_{L_2}(\rho)$ as $\rho_{\phi^u_{\dpdHSIC},L_2}$, which is defined similarly to $\rho_{\phi_{\dpdHSIC},L_2}$. Establishing the lower bound for $\rho_{\phi^u_{\dpdHSIC},L_2}$ is a much easier task than that for $\rho_{\phi_{\dpdHSIC},L_2}$, and we formulate it as Theorem~\ref{th_a2}.
\begin{theorem}[Minimum Separation of $\phi_{\dpdHSIC}$ over ${\mcP}^s_{L_2}$]
\label{th_a2}
    Assume that $\alpha \in (0,1)$, $\beta\in(0, 1-\alpha)$ are fixed, and $\epsilon>0$, $\delta \in [0, 1)$, 
    $B \geq 6\alpha^{-1} \log(2\beta^{-1})$, $\lambda_j \leq 1$ for $j\in [d]$. The minimum separation of $\phi^u_{\dpdHSIC}$ with the Gaussian kernels over ${\mcP}^s_{L_2}$ is upper bounded as
\begin{align*}
    \rho^2_{\phi^u_{\dpdHSIC},L_2} \leq C_{\alpha, \beta, s,R,M, p_1,\dots,p_d} \bigg\{ \sum_{j=1}^d\sum_{i=1}^{p_j} \lambda_{j,i}^{2s} + \frac{1}{n \sqrt{\lambda_0}} + \frac{1}{n \lambda_0 \xi_{\epsilon, \delta}}
    \bigg\}
\end{align*}
where $ C_{\alpha, \beta, s,R,M, p_1,\dots,p_d} $ is some positive constant, depending only on $\alpha, \beta, s,R,M, p_1,\dots,p_d$.
\end{theorem}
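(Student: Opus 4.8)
The plan is to mirror the tailored argument behind the V-statistic bound in Theorem~\ref{th_L2separation}, exploiting that $U_{\dHSIC}$ is an \emph{exactly unbiased} estimator of $\dHSIC^2_{\bk}$, so $\mbE_P[U_{\dHSIC}]=\dHSIC^2_{\bk}(P)$ with no estimation bias; this is precisely what removes the delicate U-versus-V discrepancy and thereby eliminates the term (IV) present in the V-statistic bound. First I would reduce the power requirement to a deterministic separation inequality: following the route of Theorem~\ref{th_L2separation} but with the permuted-U-statistic concentration inequality (Lemma~\ref{le_u_dhsic_con_per}) in place of its V-statistic analogue, the test $\phi^u_{\dpdHSIC}$ attains power at least $1-\beta$ on $\mcP^s_{L_2}(\rho)$ as soon as the signal $\dHSIC^2_{\bk}(P)$ dominates the sum of (a) the fluctuation of $U_{\dHSIC}(\mcX_n)$ about its mean, (b) the $(1-\alpha)$-quantile of the permuted statistic $U_{\dHSIC}(\mcX^{\bphi}_n)$, and (c) the Laplace perturbation quantile. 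The analytic input is the kernel-to-$L_2$ reduction: for the normalized Gaussian kernels, $\dHSIC^2_{\bk}(P)$ is a Gaussian-convolution $L_2$ functional of the density difference $p_{X^1,\dots,X^d}-p_{X^1}\cdots p_{X^d}$, so membership in the Sobolev ball $\mcS^s_{\sum p_j}(R)$ yields $\dHSIC^2_{\bk}(P)\geq c\,\rho^2-C\sum_{j=1}^d\sum_{i=1}^{p_j}\lambda_{j,i}^{2s}$ for every $P\in\mcP^s_{L_2}(\rho)$, producing the bias term $\sum_{j,i}\lambda_{j,i}^{2s}$.

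The next step quantifies the two stochastic contributions with explicit bandwidth dependence. For the permutation fluctuation (a) and (b), the dominant variance of the degenerate U-statistic is of order $n^{-2}\,\mbE_P[(\prod_{j=1}^d k^j)^2]\asymp n^{-2}\lambda_0^{-1}$, since $\int(k^j)^2\asymp\lambda_j^{-1}$; Lemma~\ref{le_u_dhsic_con_per} converts this into a quantile of order $n^{-1}\lambda_0^{-1/2}$, up to a factor depending only on the fixed $\alpha,\beta$, which is exactly the term $1/(n\sqrt{\lambda_0})$. For the privacy perturbation (c), Lemma~\ref{le_sen_u} gives sensitivity $\Delta_T=c_n n^{-1}\prod_{j}K^j$ with $c_n\leq 4d^2+4d$, and for the normalized Gaussian kernel the maximal value obeys $K^j\asymp\lambda_j^{-1}$, whence $\prod_j K^j\asymp\lambda_0^{-1}$ and $\Delta_T\asymp 1/(n\lambda_0)$; the noise $2\Delta_T\xi_{\epsilon,\delta}^{-1}\zeta$ then contributes a quantile of order $1/(n\lambda_0\xi_{\epsilon,\delta})$, matching the third term. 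Two simplifications relative to Theorem~\ref{th_L2separation} make this cleaner: because $U_{\dHSIC}$ is unbiased there is no expansion of the form $\sum_{t}\sum_{\bi^d_t}(n^t\lambda_{i_1}\cdots\lambda_{i_t})^{-1}$ to control, and because the sensitivity attaches to $\dHSIC^2$ rather than to $\dHSIC$, a single noise term replaces the two privacy terms (II)--(III) of the V-statistic bound.

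Collecting the three contributions and rearranging $c\,\rho^2\lesssim\sum_{j,i}\lambda_{j,i}^{2s}+n^{-1}\lambda_0^{-1/2}+n^{-1}\lambda_0^{-1}\xi_{\epsilon,\delta}^{-1}$ yields the stated upper bound on $\rho^2_{\phi^u_{\dpdHSIC},L_2}$, the constant absorbing the fixed $\alpha,\beta$, the smoothness and radius $s,R$, the sup-norm bound $M$, and the dimensions $p_1,\dots,p_d$; the hypothesis $\lambda_j\leq1$ is used only to prevent these constant factors from blowing up. I expect the main obstacle to be the explicit moment computations for products of normalized Gaussian kernels, namely tracking how each power of $\lambda_0^{-1}$ enters the variance of $U_{\dHSIC}$ and the conditional moments feeding Lemma~\ref{le_u_dhsic_con_per}. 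This is the same bookkeeping that drives the V-statistic proof, but it is lighter here: since the U-V gap is absent, only the leading second-order variance term must be matched against the signal, and no biased correction terms need to be carried through.
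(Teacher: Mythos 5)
Your overall route is the same as the paper's: reduce the type II error to a comparison between the signal $\dHSIC^2_{\bk}(P)$ and the sum of the permutation quantile, the sensitivity-scaled Laplace quantile, and the sampling fluctuation of $U_{\dHSIC}$; use Lemma~\ref{le_sen_u} with $K^j\asymp\lambda_j^{-1}$ to get the single privacy term $1/(n\lambda_0\xi_{\epsilon,\delta})$; and finish with the Sobolev-bias argument of Albert et al.\ to convert the kernel separation into the $L_2$ separation, producing $\sum_{j,i}\lambda_{j,i}^{2s}$. You also correctly identify the two structural simplifications relative to Theorem~\ref{th_L2separation} (unbiasedness removes the term (IV) bookkeeping; the noise attaches to $\dHSIC^2$ rather than its square root, so only one privacy term appears), which is exactly how the paper's proof is organized.

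There is, however, one genuine technical slip in your treatment of the permutation quantile. You claim that Lemma~\ref{le_u_dhsic_con_per} ``converts'' the conditional variance $n^{-2}\lambda_0^{-1}$ into a quantile of order $n^{-1}\lambda_0^{-1/2}$. It does not: that lemma is a sup-norm-based exponential inequality, with tail $\exp\{-C\min(n^2t^2/K_0^2,\,nt/K_0)\}$, so at a fixed level it yields a quantile of order $K_0/n\asymp 1/(n\lambda_0)$ — the variance never enters its statement. Since $\lambda_0\le 1$, one has $1/(n\lambda_0)\ge 1/(n\sqrt{\lambda_0})$, and in the low-privacy regime (large $\xi_{\epsilon,\delta}$) the term $1/(n\lambda_0)$ is not absorbed by $1/(n\lambda_0\xi_{\epsilon,\delta})$ either; so taken literally, your argument proves only the weaker bound with $1/(n\lambda_0)$ in place of $1/(n\sqrt{\lambda_0})$, which is not the stated theorem. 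The paper avoids this precisely by \emph{not} using the exponential inequality here: it establishes $\mathbb{E}_{\bpi}[U_{\bpi,\dHSIC}\mid\mathcal{X}_n]=0$ and $\mathbb{E}[\Var_{\bpi}(U_{\bpi,\dHSIC}\mid\mathcal{X}_n)]\le C n^{-2}\lambda_0^{-1}$ (a moment computation in the spirit of Kim (2023) and Albert et al.\ (2022)) and then applies Chebyshev's inequality, which is enough because $\alpha,\beta$ are fixed in this theorem. Your variance identification is correct, so the fix is simply to route it through Chebyshev (or any variance-sensitive bound) rather than through Lemma~\ref{le_u_dhsic_con_per}; with that substitution the proposal goes through and coincides with the paper's proof.
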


By comparing the results in Theorem~\ref{th_L2separation} and Theorem~\ref{th_a2}, the upper bound for $\rho_{\phi^u_{\dpdHSIC},L_2}$ is less than $\rho_{\phi_{\dpdHSIC},L_2}$ if $n\xi_{\epsilon, \delta}\lesssim1$, which indicates a condition on extremely high privacy restrictions. 
Since we assume $\lambda_0\leq1$, $n\xi_{\epsilon, \delta}\lesssim1$ implies that the signal $\|P_{X^1,\dots,X^d}-P_{X^1}\dots P_{X^d}\|_{L_2}$ is sufficiently large. However, the $L_2$ distance cannot be made sufficiently large as $\|P_{X^1,\dots,X^d}\|_{L_{\infty}}$ and $\|P_{X^1}\dots P_{X^d}\|_{L_{\infty}}$ are assumed to be bounded. Therefore, the proposed test dpdHSIC is more favorable than the one based on the U-statistic when it comes to achieving a tight separation rate in the $L_2$ distance. 
When $\xi_{\epsilon, \delta}\gtrsim\lambda_0^{-1/2}$, the test based on the U-statistic also achieves the optimal separation rate $n^{-2s/(4s+p)}$ over the Sobolev ball by setting $\lambda_{j,i}= n^{-2/(4s+p)}$ like our dpdHSIC.


\section{Technical Proofs}\label{app_technical_proof}
\subsection{Proof for Lemma~\ref{lemma_dp_re}}\label{app_lemma_dp_re}
\begin{proof}
    By Lemma~\ref{lemma_repre1},
    \[
    \mathbbm{1}(\hat{p}_{\DP} \leq \alpha)=\mathbbm{1}(M_0>q_{1-\alpha}),
    \]
    where $q_{1-\alpha}$ is the $1-\alpha$ quantile of $M_0,M_1,\dots,M_B$. By Lemma~\ref{lemma_repre2},
    \[
    \mathbbm{1}(M_0>q_{1-\alpha})
    =\mathbbm{1}(M_0 > r_{1-\alpha_\star}) \mathbbm{1}(\alpha \geq {1}/{n+1}),
    \]
    where $\alpha_\star = \max\{({(n+1)}\alpha/{n}  - {1}/{n}), 0\}$ and $r_{1-\alpha_\star}$ is the $1-\alpha_\star$ quantile of $\{M_i\}_{i=1}^n$. The sensitivity of $T(\mcX_n)-r_{1-\alpha_\star}$ is upper bounded by $2\Delta_T$ by Lemma~\ref{lemma_sen_quan} along with triangle inequality. Therefore $M_0-r_{1-\alpha_\star}$ is $(\epsilon,\delta)$-DP, and $\mathbbm{1}(M_0 > r_{1-\alpha_\star}) \mathbbm{1}(\alpha \geq {1}/{n+1})$ is a post-processing quantity. The desired statement (i) follows as post-processing does not leak more information \citep{Dwork2006CalibratingNT}. 
    Given that $\mcX^{\bphi_i}_n$ have a group structure and $\mcX_n$ is exchangeable, it holds that $T(\mcX^{\bphi_i}_n)$ and $M_i$ also have a group structure and are exchangeable. 
    Statement (ii) follows immediately from \citet[Proposition~B.4]{Pfister2018KernelbasedTF}. The first part of statement (iii) is a direct result of \citet[Lemma~S.4]{Kim2026DPP}.

    For the second part of (iii), we denote $\lim_{n \to \infty} \mbP_P(M_0 \leq M_1)=p>\alpha$, and assume for $n>N$ we have $\mbP_P(M_0 \leq M_1)>(\alpha+p)/2$.
    We abbreviate $B_n$ as $B$, such that $B>\alpha^{-1}-1$ and may be fixed, diverge with $n$ or be non-monotone. 
    Noting that $\mathbbm{1}(M_{0} \leq M_{i})$ for $i\in[B]$ are i.i.d.\ random variables conditional on $\mcX_n$, the power is 
    \begin{equation}\label{eq_24}
    \begin{aligned}
        &\mathbb{P}_P\bigg[ \frac{1}{B+1} \bigg( \sum_{i=1}^B \mathbbm{1}(M_{0} \leq M_{i}) + 1 \bigg) \leq \alpha \bigg] \\
        =\ &\mathbb{P}_P\bigg[ \sum_{i=1}^B \mathbbm{1}(M_{0} \leq M_{i}) \leq \alpha(B+1) - 1 \bigg]\\
        =\ &\mbE\bigg[
        \mathbb{P}\bigg\{ \sum_{i=1}^B \mathbbm{1}(M_{0} \leq M_{i}) \leq \alpha(B+1) - 1 \mid \mcX_n\bigg\}
        \bigg]\\        
        =\ &\mbE\bigg[\sum_{k=0}^{\lfloor\alpha(B+1)-1\rfloor}C_B^k\mbP(M_0 \leq M_1\mid \mcX_n)^k\mbP(M_0 > M_1\mid \mcX_n)^{B-k}\bigg]\\
        =\ &\int_{{\mcA_n}\cup{\mcA_n^c}}\sum_{k=0}^{\lfloor\alpha(B+1)-1\rfloor}C_B^k\mbP(M_0 \leq M_1\mid \mcX_n)^k\mbP(M_0 > M_1\mid \mcX_n)^{B-k}\dd\mu(\mcX_n)\\
        \leq\ & \int_{{\mcA_n}}\sum_{k=0}^{\lfloor\alpha(B+1)-1\rfloor}C_B^k\mbP(M_0 \leq M_1\mid \mcX_n)^k\mbP(M_0 > M_1\mid \mcX_n)^{B-k}\dd\mu(\mcX_n)+\mu({\mcA_n^c}), 
    \end{aligned}
    \end{equation}
    where ${\mcA_n}$ is defined as 
    $$
    {\mcA_n}:=\{\mcX_n:\mbP(M_0 \leq M_1\mid\mcX_n)>(\alpha+p)/2\}.
    $$
    When $n>N$ we have $\mu({\mcA_n})>0$ as $\mbP_P(M_0 \leq M_1)>(\alpha+p)/2$. For any $\mcX_n\in{\mcA_n}$, we denote $\mbP(M_0 \leq M_1\mid \mcX_n)=p'>(\alpha+p)/2$. 
    By denoting $X\sim\textrm{Bin}(B,p')$, for any $t>0$ we have
    \begin{align*}
        \sum_{k=0}^{\lfloor\alpha(B+1)-1\rfloor}C_B^k\mbP(M_0 \leq M_1\mid \mcX_n)^k\mbP(M_0 > M_1\mid \mcX_n)^{B-k}&\leq\mbP(X\leq\alpha B\mid \mcX_n)\\
        &\leq e^{-t\alpha B}\cdot\mbE[e^{tX}\mid \mcX_n]\\
        &=e^{-t\alpha B}\cdot(p'e^t+(1-p'))^B.
    \end{align*}
    Optimizing over $t$, the last quantity attains its minimum at $t^*=\ln(\alpha(1-p)/(p(1-\alpha))$. Thus we have Chernoff bound for $B$ as 
    $$
    \mbP(X\leq\alpha B)\leq \exp\bigg\{-B\bigg(\alpha\ln\frac{\alpha}{p'}+(1-\alpha)\ln\frac{1-\alpha}{1-p'}\bigg)\bigg\}<c<1,
    $$
    where $\alpha\ln\frac{\alpha}{p'}+(1-\alpha)\ln\frac{1-\alpha}{1-p'}>0$ as $p'>(\alpha+p)/2$, and $c$ is some constant not depending on $n$ and $p'$. Hence the probability is less than $1$ as $p'>\alpha$ and $B+1>\alpha^{-1}$. The probability above along with \eqref{eq_24} yields 
    $$
    \mathbb{P}_P\bigg[ \frac{1}{B+1} \bigg( \sum_{i=1}^B \mathbbm{1}(M_{0} \leq M_{i}) + 1 \bigg) \leq \alpha \bigg]<c\mu({\mcA_n})+\mu({\mcA_n^c})<1.
    $$
    Taking $n\to\infty$ yields the desired result. 
\end{proof}

\subsection{Proof for Proposition~\ref{sen_dhsic} and Examples} \label{app_pr_sen}
We briefly overview this section here. We establish the upper bound for the sensitivity of permutation dHSIC in Appendix~\ref{app_a21}, and the lower bound for it in Appendix~\ref{app_a22}.
The lower bound for the sensitivity of bootstrap dHSIC is given in Appendix~\ref{app_a23}, and the inconsistency of DP bootstrap dHSIC is included in Appendix~\ref{app_a24}. 

\subsubsection{Upper Bound for Permutation dHSIC}\label{app_a21}
\begin{proof}
We first demonstrate the upper bound for the sensitivity. Without loss of generality, we consider the dataset $\mcX_n=\{(X_1^1,\dots,X_1^d),\dots,(X_n^1,\dots,X_n^d)\}$ and its neighboring dataset is 
$$
\widetilde{\mcX}_n=\{({X_1^1}',\dots,{X_1^d}'),\dots,(X_n^1,\dots,X_n^d)\}:=\{(\widetilde X_1^1,\dots,\widetilde X_1^d),\dots,(\widetilde X_n^1,\dots,\widetilde X_n^d)\},
$$
where $({X_1^1}',\dots,{X_1^d}')$ is a copy of $({X_1^1},\dots,{X_1^d})$ independent of everything else. That is, $\widetilde \mcX_n$ and $\mcX_n$ only differ in their first component. 

For a given permutation $\bpi$, $\mcX_n^{\bpi}=\{(X_{\pi_1^1}^1,\dots, X_{\pi_1^d}^d),\dots, (X_{\pi_n^1}^1,\dots, X_{\pi_n^d}^d)\}$, where each $(\pi_1^j,\dots,\pi_n^j)$ is a permutation on $[n]$ for $j=1,\dots,d$. Such permutation on $\widetilde{\mcX}_n$ leads to $\widetilde{\mcX}_n^{\bpi}$. For simplicity, we denote $k^j(x^j,\cdot)=\psi^j(x^j)$ for $j=1,\dots,d$. We further write the sample mean of $\psi^j(X_1^j),\dots,\psi^j(X_n^j)$ as $\bar\psi^j$, and the sample mean of $\psi^j(\widetilde X_1^j),\dots,\psi^j(\widetilde X_n^j)$ as $\widetilde{\psi}^j$, $j=1,\dots,d$. Then we can establish the connection between $\widehat{\dHSIC}(\mcX^{\bpi}_n)$ and $\hat{\dHSIC}(\mcX_n)$ as:
\begin{align}\label{connection}
  \hat{\dHSIC}(\mcX_n)
=&\sup_{f\in\mcF_{\bk}}\bigg\{
\frac{1}{n}\sum_{i=1}^nf(X^1_{\pi^1_i},\dots, X^d_{\pi^d_i})
-\frac{1}{n^d}\sum_{i_1,\dots, i_d}^nf(X^1_{\pi^1_{i_1}},\dots, X^d_{\pi^d_{i_d}})
\bigg\}\nonumber\\
=&\sup_{f\in\mcF_{\bk}}
\bigg\langle f, 
\frac{1}{n}\sum_{i=1}^n\prod_{j=1}^d \psi^j(X_{\pi^j_i}^j)
-\frac{1}{n^d}\prod_{j=1}^d\sum_{i=1}^n\psi^j(X^j_{\pi^j_i}) \bigg\rangle_{\mcH_{\bk}}\nonumber\\
=&\sup_{f\in\mcF_{\bk}}
\bigg\langle f, 
\frac{1}{n}\sum_{i=1}^n\prod_{j=1}^d \psi^j(X_{\pi^j_i}^j)
-\frac{1}{n^d}\prod_{j=1}^d\sum_{i=1}^n\psi^j(X^j_{i}) \bigg\rangle_{\mcH_{\bk}}\nonumber\\
:=&\sup_{f\in\mcF_{\bk}}
\langle f, 
S_1-S_2 \rangle_{\mcH_{\bk}}. 
\end{align}

First, we prove $S_2$ can be represented as \eqref{S2} for $d\geq 2$ by  mathematical induction. 
\begin{align}\label{S2}
    S_2=& \frac{1}{n^d}\prod_{j=1}^d\sum_{i=1}^n \psi^j(\tilde X^j_{i})
    + \frac{1}{n}( \psi^1(X^1_{1}) -\psi^1(\tilde X^1_{1}) )  \tilde\psi^2\dots\tilde\psi^d\nonumber\\
    &+ \frac{1}{n}\bar\psi^1( \psi^2(X^2_{1}) -\psi^2(\tilde X^2_{1}) )  \tilde\psi^3\dots\tilde\psi^d 
    +\dots+ \frac{1}{n}\bar\psi^1\dots\bar\psi^{d-1}( \psi^d(X^d_{1}) -\psi^d(\tilde X^d_{1}) ). 
\end{align}
In fact, for the case with $d=2$, we have
\begin{align*}
    S_2=& \frac{1}{n^2}\sum_{i_1,i_2=1}^n\psi^1(X^1_{i_1})\psi^2(X^2_{i_2}) \\
    =& \frac{1}{n^2}\sum_{i_1,i_2=1}^n \{\psi^1(\tilde X^1_{i_1})+(\psi^1(X^1_{i_1}) -\psi^1(\tilde X^1_{i_1}))\} 
    \{\psi^2(\tilde X^2_{i_2})+(\psi^2(X^2_{i_2}) -\psi^2(\tilde X^2_{i_2}))\}\\
    =& \frac{1}{n^2}\sum_{i_1,i_2=1}^n \{ \psi^1(\tilde X^1_{i_1})\psi^2(\tilde X^2_{i_2}) 
    + (\psi^1(X^1_{i_1}) -\psi^1(\tilde X^1_{i_1}))\psi^2(\tilde X^2_{i_2})
    +\psi^1(X^1_{i_1}) (\psi^2(X^2_{i_2}) -\psi^2(\tilde X^2_{i_2}))\}\\
    =& \frac{1}{n^2}\sum_{i_1,i_2=1}^n  \psi^1(\tilde X^1_{i_1})\psi^2(\tilde X^2_{i_2}) 
    + (\psi^1(X^1_{1}) -\psi^1(\tilde X^1_{1}))\tilde\psi^2
    +\bar\psi^1 (\psi^2(X^2_{1}) -\psi^2(\tilde X^2_{1})), 
\end{align*}
which completes the induction foundation. 

Second, suppose \eqref{S2} holds for a certain $d\geq2$, then we are to prove it also holds for $d+1$. In fact, simple calculation leads to 
\begin{align*}
   S_2 =& \frac{1}{n^{d+1}}\prod_{j=1}^{d+1}\sum_{i=1}^n\psi^j(X^j_{i})
    =\frac{1}{n^{d}}\prod_{j=1}^{d}\sum_{i=1}^n\psi^j(X^j_{i})\cdot
   \frac{1}{n}\sum_{i=1}^n\psi^{d+1}(X^{d+1}_{i})\\
   =& \frac{1}{n^{d}}\prod_{j=1}^{d}\sum_{i=1}^n\psi^j(X^j_{i})\cdot
   \frac{1}{n}\sum_{i=1}^n\{\psi^{d+1}(\tilde X^{d+1}_{i})+(\psi^{d+1}(X^{d+1}_{i}) -\psi^{d+1}(\tilde X^{d+1}_{i}))\}\\
   =& \frac{1}{n^{d}}\prod_{j=1}^{d}\sum_{i=1}^n\psi^j(X^j_{i})\cdot\tilde\psi^{d+1}
    + 
    \prod_{j=1}^{d}\bar\psi^j\cdot
   \frac{1}{n}(\psi^{d+1}(X^{d+1}_{i}) -\psi^{d+1}(\tilde X^{d+1}_{i}))\\
   \overset{(*)}{=}& \frac{1}{n^{d+1}}\prod_{j=1}^{d+1}\sum_{i=1}^n \psi^j(\tilde X^j_{i})
    + \frac{1}{n}( \psi^1(X^1_{1}) -\psi^1(\tilde X^1_{1}) )  \tilde\psi^2\dots\tilde\psi^{d+1}\\
    &+ \frac{1}{n}\bar\psi^1( \psi^2(X^2_{1}) -\psi^2(\tilde X^2_{1}) )  \tilde\psi^3\dots\tilde\psi^{d+1} 
    +\dots+ \frac{1}{n}\bar\psi^1\dots\bar\psi^{d}(\psi^{d+1}(X^{d+1}_{1}) -\psi^{d+1}(\tilde X^{d+1}_{1}) ), 
\end{align*}
where step $(*)$ uses that induction assumption that \eqref{S2} holds for $d$. In all, we complete the mathematical induction and thus \eqref{S2} holds for any $d\geq2$.


Second, we expand $S_1$ by adding and subtracting the same term. Note that $\psi^j(X_{\pi^j_i}^j)= \psi^j(\tilde X _{\pi^j_i}^j)$ holds unless subscript satisfies $\pi^j_i=1$. Let there be $d'\leq d$ different $i$ such that $\pi^j_i=1$ for $j\in[d]$. $[d]$ is divided into $d'$ disjoint sets $C_1,\dots,C_{d'}$ by the values of those $i$s.  $j_1$ and $j_2$ are in the same set if and only if $\pi^{j_1}_i =\pi^{j_2}_i=1$ for a certain $i$. 
Without loss of generality, 
$C_k=\{j:\pi^j_k=1\}$, $k=1,\dots,d'$. 

With these notations in mind, we expand $S_1$ as: 
\begin{align}\label{S1}
    S_1 =& \frac{1}{n}\sum_{i=1}^n\prod_{j=1}^d \{\psi^j(\tilde X_{\pi^j_i}^j)+ (\psi^j(X_{\pi^j_i}^j)-\psi^j(\tilde X _{\pi^j_i}^j))\}\nonumber\\
    =& \frac{1}{n}\sum_{i=1}^{d'}\prod_{j\notin C_i}\psi^j(\tilde X_{\pi^j_i}^j)\prod_{j\in C_i} \{\psi^j(\tilde X_{\pi^j_i}^j)+ (\psi^j(X_{\pi^j_i}^j)-\psi^j(\tilde X _{\pi^j_i}^j))\}+\frac{1}{n}\sum_{i=d'+1}^{n}\prod_{j=1}^d\psi^j(\tilde X_{\pi^j_i}^j)\nonumber\\
    \overset{(i)}{=}& \frac{1}{n}\sum_{i=1}^{d'}\prod_{j\notin C_i}\psi^j(\tilde X_{\pi^j_i}^j)
    \bigg\{\prod_{j\in C_i} \psi^j(\tilde X_{\pi^j_i}^j)+ \sum_{j\in C_i} \{(\psi^j(X_{\pi^j_i}^j)-\psi^j(\tilde X _{\pi^j_i}^j))
    \prod_{\substack{j'\in C_i\\j'\neq j}}\psi^{j'}(\dot X_{\pi^{j'}_i}^{j'})\}
    \bigg\}\nonumber\\
    &+\frac{1}{n}\sum_{i=d'+1}^{n}\prod_{j=1}^d\psi^j(\tilde X_{\pi^j_i}^j)\nonumber\\
    =& \frac{1}{n}\sum_{i=1}^n\prod_{j=1}^d\psi^j(\tilde X_{\pi^j_i}^j) + 
    \frac{1}{n}\bigg\{ \sum_{i=1}^{d'}\sum_{j\in C_i}(\psi^j(X_{1}^j)-\psi^j(\tilde X _{1}^j))
    \prod_{j'\neq j}\psi^{j'}(\dot X_{\pi^{j'}_i}^{j'})
    \bigg\}\nonumber\\
    \overset{(ii)}{=}& \frac{1}{n}\sum_{i=1}^n\prod_{j=1}^d\psi^j(\tilde X_{\pi^j_i}^j) + 
    \frac{1}{n}\bigg\{ \sum_{j=1}^{d}(\psi^j(X_{1}^j)-\psi^j(\tilde X _{1}^j))
    \prod_{j'\neq j}\psi^{j'}(\dot X_{\tau(j,j')}^{j'})
    \bigg\}. 
\end{align}
Here, the notation $\psi^{j'}(\dot X_{\pi^{j'}_i}^{j'})$ denotes either $\psi^{j'}(\tilde X_{\pi^{j'}_i}^{j'})$ or $\psi^{j'}(X_{\pi^{j'}_i}^{j'})$, which makes no difference when deriving the upper bound. Step $(i)$ uses similar techniques in the proof of \eqref{S2} to handle the product. Step $(ii)$ rewrites the expression based on $j$, where $\tau(j,j')$ is the subscript that depends on $j,j'$. 

Combining the \eqref{S2} with \eqref{S1} and using $\dot\psi^j$ to denote either $\bar\psi^j$ or $\tilde\psi^j$, it follows that 
\begin{align*}
    S_1-S_2= &
    \frac{1}{n}\sum_{i=1}^n\prod_{j=1}^d\psi^j(\tilde X_{\pi^j_i}^j) 
    -\frac{1}{n^d}\prod_{j=1}^d\sum_{i=1}^n \psi^j(\tilde X^j_{i})\\
    &+ \frac{1}{n} \sum_{j=1}^{d}(\psi^j(X_{1}^j)-\psi^j(\tilde X _{1}^j))
    \bigg\{
    \prod_{j'\neq j}\psi^{j'}(\dot X_{\tau(j,j')}^{j'})
    - \prod_{j'\neq j}\dot\psi^{j'} 
    \bigg\}.
\end{align*}
The inner product defined on ${\mcH_{\bk}}$ between $f$ and the result of the first line equals to $\widehat{\dHSIC}(\widetilde{\mcX}^{\bpi}_n)$. Recalling \eqref{connection}, the  
the fact $\|f\|_{\mcH_{\bk}} \leq1$ along with Cauchy--Schwartz inequality yields that 
\begin{align*}
    &|\widehat{\dHSIC}(\mcX^{\bpi}_n)-\widehat{\dHSIC}(\widetilde{\mcX}^{\bpi}_n)|\\
    \leq &
    \sup_{f\in\mcF_{\bk}}\bigg|\bigg\langle f, 
    \frac{1}{n} \sum_{j=1}^{d}(\psi^j(X_{1}^j)-\psi^j(\tilde X _{1}^j))
    \bigg\{
    \prod_{j'\neq j}\psi^{j'}(\dot X_{\tau(j,j')}^{j'})
    - \prod_{j'\neq j}\dot\psi^{j'} 
    \bigg\}
    \bigg\rangle_{\mcH_{\bk}}\bigg|\\
    \leq & \frac{1}{n} \sum_{j=1}^{d}
    \sup_{f\in\mcF_{\bk}}\bigg|\bigg\langle f, 
    (\psi^j(X_{1}^j)-\psi^j(\tilde X _{1}^j))
    \bigg\{
    \prod_{j'\neq j}\psi^{j'}(\dot X_{\tau(j,j')}^{j'})
    - \prod_{j'\neq j}\dot\psi^{j'} 
    \bigg\}
    \bigg\rangle_{\mcH_{\bk}}\bigg|\\
    \leq & \frac{1}{n} \sum_{j=1}^{d}
    \bigg\|
    (\psi^j(X_{1}^j)-\psi^j(\tilde X _{1}^j))
    \bigg\{
    \prod_{j'\neq j}\psi^{j'}(\dot X_{\tau(j,j')}^{j'})
    - \prod_{j'\neq j}\dot\psi^{j'} 
    \bigg\} \bigg\|_{\mcH_{\bk}}\\
    =& \frac{1}{n} \sum_{j=1}^{d}
    \|
    \psi^j(X_{1}^j)-\psi^j(\tilde X _{1}^j)\|_{\mcH_{k^j}}
    \bigg\|
    \prod_{j'\neq j}\psi^{j'}(\dot X_{\tau(j,j')}^{j'})
    - \prod_{j'\neq j}\dot\psi^{j'} 
    \bigg\|_{\mcH_{\otimes_{j\neq j'} k^{j'}}}\\
    \overset{(iv)}{\leq} & \frac{1}{n} \sum_{j=1}^{d} (2K^j)^{1/2}\bigg(2\prod_{j'\neq j}K^{j'}\bigg)^{1/2}=\frac{2d}{n}\bigg(\prod_{j=1}^dK^j\bigg)^{1/2}.
\end{align*}
In the above calculation, Step $(iv)$ follows due to the fact that 
\begin{align*}
    \|
    \psi^j(x^j)-\psi^j({x^j}')\|_{\mcH_{k^j}}&=\sqrt{k^j(x^j,x^j)+k^j({x^j}',{x^j}')-2k^j(x^j,{x^j}')}\leq \sqrt{2K^j}\quad \text{and}\\
    \bigg\|
    \prod_{j}\psi^{j}({x^j})
    - \prod_{j}{\psi^j}({x^j}')
    \bigg\|_{\mcH_{\bk}}
    &\leq
    \sqrt{\bk(\prod_{j}{x^j},\prod_{j}{x^j})+\bk(\prod_{j}{x^j}',\prod_{j}{x^j}')}\\
    &= \sqrt{\prod_{j}k^j({x^j},{x^j})+\prod_{j}k^j({x^j}',{x^j}')} \leq \sqrt{2\prod_{j}K^j}.
\end{align*}
\end{proof}

Up to now, we have completed the proof of the upper bound the permutation dHSIC. It is worth remarking that, a sharper inequality in Step $(iv)$ can be derived by discussing the second term more meticulously. One can reformulate the expression and expand the averages to identify and cancel out identical terms, leading to a sharper bound. 
However, improvements can be achieved incrementally, with a constant factor sharpened by subtracting $O(n^{-d})$, while the discussion may be rather miscellaneous. 
Therefore, we only state a briefer and more elegant result in this paper.

\subsubsection{Lower Bounds for the Permutation dHSIC}\label{app_a22}
Next we construct a lower bound for sensitivity of permutation dHSIC. 
Further assume that kernels $k^j$ are translation invariant, and have non-empty level sets on their domain. Here, the kernel $k^j$ has non-empty level sets on $\mbX^j$ if, for any $\varepsilon\in(0,K^j)$, there exist $x,y\in\mbX^j$ such that $k^j(x,y)\leq\varepsilon$. Both the Gaussian kernel and the Laplace kernel have non-empty level sets on $\mbR^d$ due to the continuity of the kernel function. 

We first reduce the general $d$-variable case to the bivariate case. 
Let $\Delta_T^d$ denote the sensitivity of dHSIC for $d$ random vectors with kernels $k^1,\dots,k^d$, and define $\Delta_T^j$ similarly for $j\in[d]$. 
For a translation invariant kernel $k^d$, write $k^d(x,y)=\kappa^d(x-y)$. Since $K^d$ is the uniform upper bound and $k^d(x,x)=\kappa^d(0)$ for all $x\in\mbX^d$, we may choose the bound such that $k^d(x,x)=K^d$. 
Take two neighboring datasets in the $(d-1)$-variable problem whose dHSIC values differ by nearly $\Delta_T^{d-1}$, and append to both datasets the same constant $d$th coordinate. 
In the closed-form expression of dHSIC in Lemma~\ref{le_sq_dhsic}, every kernel factor from the appended coordinate is then equal to $K^d$, so the squared dHSIC difference is multiplied by $K^d$. 
Equivalently, for our square-root dHSIC statistic,
\[
    \Delta_T^d\geq \sqrt{K^d}\Delta_T^{d-1}.
\]
Iterating this reduction shows that it is enough to construct the lower bound for $d=2$. 
The bivariate construction below gives $\Delta_T^2\geq4(n-2.5)n^{-2}\sqrt{K^1K^2}$, and hence $\Delta_T^d\gtrsim K_0^{1/2}/n$.

Without loss of generality, we consider $d=2$ and $k^1$ and $k^2$ are translation invariant with non-empty level sets on $\mbX^1$ and $\mbX^2$, respectively. For a given $\varepsilon\in(0,\min\{
K^1,K^2\})$, we assume that there exist $x^i_a,x^i_b\in\mbX^i$ such that $k^i(x^i_a,x^i_b)=\varepsilon_i\leq\varepsilon$ and $k^i(x^i,x^i)=K^i$ for all $x^i$ and $i=1,2$. Consider datasets
$$
\mathcal{X}_n = \begin{bmatrix} 
x^1_a & x^2_a \\ 
x^1_a & x^2_a \\ 
x^1_b & x^2_b \\ 
\vdots & \vdots \\ 
x^1_b & x^2_b 
\end{bmatrix} 
\quad \text{and} \quad 
\tilde{\mathcal{X}}_n = \begin{bmatrix} 
x^1_a & x^2_a \\ 
x^1_b & x^2_b \\ 
x^1_b & x^2_b \\ 
\vdots & \vdots \\ 
x^1_b & x^2_b 
\end{bmatrix},
$$
where $d_{\ham}(\mcX_n,\tilde{\mcX}_n)=1$. Consider a permutation $\bpi$ resampling $\bpi=(2,1,3,4,\dots,n)$ and corresponding permuted datasets are
$$
\mathcal{X}_n = \begin{bmatrix} 
x^1_a & x^2_a \\ 
x^1_a & x^2_a \\ 
x^1_b & x^2_b \\ 
\vdots & \vdots \\ 
x^1_b & x^2_b 
\end{bmatrix} 
\quad \text{and} \quad 
\tilde{\mathcal{X}}_n = \begin{bmatrix} 
x^1_a & x^2_b \\ 
x^1_b & x^2_a \\ 
x^1_b & x^2_b \\ 
\vdots & \vdots \\ 
x^1_b & x^2_b 
\end{bmatrix},
$$
Using the closed-form expression of the squared dHSIC in Lemma~\ref{le_sq_dhsic}, one can calculate that 
\begin{align*}
    \hat{\dHSIC}^2(\mcX_n^{\bpi})=&\frac{4+(n-2)^2}{n^2}K^1K^2+\frac{4(n-2)\varepsilon_1\varepsilon_2}{n^2}\\
    &+\bigg\{\frac{4+(n-2)^2}{n^2}K^1+\frac{4(n-2)\varepsilon_1}{n^2}\bigg\}
    \bigg\{\frac{4+(n-2)^2}{n^2}K^2+\frac{4(n-2)\varepsilon_2}{n^2}\bigg\}\\
    &-4\bigg\{
    \frac{4}{n^3}K^1K^2+\frac{2(n-2)\varepsilon_1K^2}{n^3}+\frac{2(n-2)\varepsilon_2K^1}{n^3}+\frac{(n-2)^2\varepsilon_1\varepsilon_2}{n^3}\bigg\}\\
    &-2(n-2)\bigg\{
    \frac{(n-2)^2}{n^3}K^1K^2+\frac{2(n-2)\varepsilon_1K^2}{n^3}+\frac{2(n-2)\varepsilon_2K^1}{n^3}+\frac{4\varepsilon_1\varepsilon_2}{n^3}\bigg\}\\
    =&\frac{16(n-2)^2}{n^4}K^1K^2+C_1\varepsilon_1\varepsilon_2+C_2\varepsilon_1+C_3\varepsilon_2, 
\end{align*}
where $C_1, C_2, C_3$ are some constants that only depend on $K^1, K^2, n$. A similar calculation shows
$$
\hat{\dHSIC}^2(\tilde{\mcX}_n^{\bpi})=\frac{4}{n^4}K^1K^2+C_1'\varepsilon_1\varepsilon_2+C_2'\varepsilon_1+C_3'\varepsilon_2.
$$
Combining them together yields that
$$
|\hat{\dHSIC}({\mcX}_n^{\bpi})-\hat{\dHSIC}(\tilde{\mcX}_n^{\bpi})|\geq\frac{4(n-2.5)}{n^2}\sqrt{K^1K^2}+h(\varepsilon_1,\varepsilon_2,K^1,K^2,n),
$$
where $h(\varepsilon_1,\varepsilon_2,K^1,K^2,n)$ is some function of $\varepsilon_1,\varepsilon_2,K^1,K^2,n$, which goes to zero as $\varepsilon_1\to0$ and $\varepsilon_2\to0$ for any fixed $K^1,K^2,n$. By letting $\varepsilon_1,\varepsilon_2\to0$, the sensitivity of permutation dHSIC is at least $4(n-2.5)n^{-2}\sqrt{K^1K^2}$ for any fixed $n$. It follows that the sensitivity of dpdHSIC $\Delta_T$ satisfying $\Delta_T\gtrsim K_0^{1/2}/n$. 

\subsection{Proof for Theorem~\ref{property_dpdH}}\label{app_property_dpdH}
\begin{proof} 
    (i) and (ii) are direct results from (i) and (ii) in Lemma~\ref{lemma_dp_re}, respectively.
    To demonstrate (iii), by (iii) in Lemma~\ref{lemma_dp_re}, it is sufficient to verify $\lim_{n\to\infty}\mbP(M_{0,n}\leq M_{1,n})=0$, where
    \begin{align*}
        M_{i,n}&:=M_i=\hat{\dHSIC}(\mathcal{X}_n^{\bpi_i})+\frac{4d\sqrt{K_0}}{n\xi_{\epsilon,\delta}}\zeta_i\quad\text{for } i\in\{0,1\}.
    \end{align*}
    In this theorem, we hold that the upper bound for kernel products $K_0$ keeps as a constant.    
    Since $1/(n\xi_{\epsilon,\delta})\to0$ as $n\to\infty$, then $4d\sqrt{K_0}/(n\xi_{\epsilon,\delta})\zeta_i=o_P(1)$, 
    which leads to $M_{i,n}\overset{p}{\to}\hat{\dHSIC}(\mathcal{X}_n^{\bpi_i})$ as $n\to\infty$. 
    Recalling that \citet[Proposition~1]{Pfister2018KernelbasedTF}, the $d$ groups are not jointly independent if and only if the population $\dHSIC>0$.     
    Therefore, it is sufficient 
    to verify $\hat{\dHSIC}(\mcX_n^{\bpi_1})\overset{p}{\to}0$ and $\hat{\dHSIC}(\mcX_n^{\bpi_0})\overset{p}{\to}\dHSIC_{\bk}(P_{X^1,\dots, X^d})$ to complete the proof. 
    In fact, Lemma~\ref{le_ineq_e_dhsic} indicates that the unpermuted dHSIC statistic satisfies $\hat{\dHSIC}(\mcX_n^{\bpi_0})\overset{p}{\to}\dHSIC_{\bk}(P_{X^1,\dots, X^d})$, while a direct application to Lemma~\ref{le_dhsic_con_per} yields that $\hat{\dHSIC}(\mcX_n^{\bpi_1})\overset{p}{\to}0$. By the definition of convergence in probability, we have
    $$
    \lim_{n\to\infty}\mbP(M_{0,n}\leq M_{1,n})=\lim_{n\to\infty}\mbP(\dHSIC_{\bk}(P_{X^1,\dots, X^d})\leq0)=0,
    $$
    which completes the proof. 
\end{proof}

\subsection{Proof of Proposition~\ref{prop_bootstrap_sen}}\label{app_a23}
\begin{proof}
We first show that it suffices to construct the example for $d=2$. 
Let $\Delta_{\bb}^d$ denote the sensitivity of bootstrap dHSIC for $d$ random vectors with kernels $k^1,\dots,k^d$, and define $\Delta_{\bb}^2$ similarly for the first two kernels. 
Take two neighboring datasets in the bivariate construction and append the same constant coordinates to dimensions $3,\dots,d$ in both datasets. 
For each appended coordinate $j\geq3$, the corresponding kernel factor equals $k^j(x^j,x^j)=K^j$ throughout the closed-form expression in Lemma~\ref{le_sq_dhsic}. 
Therefore, the squared bootstrap dHSIC discrepancy in the $d$-variable construction is multiplied by $\prod_{j=3}^dK^j$, and hence the square-root statistic satisfies
\[
    \Delta_{\bb}^d\geq \bigg(\prod_{j=3}^dK^j\bigg)^{1/2}\Delta_{\bb}^2.
\]
Thus the bivariate lower bound below immediately yields the desired lower bound $3K_0^{1/2}/8$ for general $d$.

Without loss of generality, we consider $d=2$ and $k^1$ and $k^2$ are translation invariant with non-empty level sets on $\mbX^1$ and $\mbX^2$, respectively. For a given $\varepsilon\in(0,\min\{
K^1,K^2\})$, we assume that there exist $x^i_a,x^i_b\in\mbX^i$ such that $k^i(x^i_a,x^i_b)=\varepsilon_i\leq\varepsilon$ and $k^i(x^i,x^i)=K^i$ for all $x^i$ and $i=1,2$. Consider datasets
$$
\mathcal{X}_n = \begin{bmatrix} 
x^1_a & x^2_a \\ 
x^1_a & x^2_a \\ 
x^1_b & x^2_b \\ 
\vdots & \vdots \\ 
x^1_b & x^2_b 
\end{bmatrix} 
\quad \text{and} \quad 
\tilde{\mathcal{X}}_n = \begin{bmatrix} 
x^1_a & x^2_a \\ 
x^1_b & x^2_b \\ 
x^1_b & x^2_b \\ 
\vdots & \vdots \\ 
x^1_b & x^2_b 
\end{bmatrix},
$$
where $d_{\ham}(\mcX_n,\tilde{\mcX}_n)=1$ and $n=4m$ for some $m\in\mathbb{Z}^+$ for simplicity. Consider a bootstrap $\bb$ resampling $((1,1),(1,2),(2,1),(3,3))$ for $m$ times, and corresponding datasets are
$$
\mathcal{X}_n^{\bb} = \begin{bmatrix} 
x^1_a & x^2_a \\ 
x^1_a & x^2_a \\ 
x^1_a & x^2_a \\ 
x^1_b & x^2_b \\
\vdots & \vdots 
\end{bmatrix} 
\quad \text{and} \quad 
\tilde{\mathcal{X}}_n^{\bb} = \begin{bmatrix} 
x^1_a & x^2_a \\ 
x^1_a & x^2_b \\ 
x^1_b & x^2_a \\ 
x^1_b & x^2_b \\
\vdots & \vdots 
\end{bmatrix},
$$
where the whole datasets are given by the first four data points repeating $m$ times.
Using the closed-form expression of the squared dHSIC in Lemma~\ref{le_sq_dhsic}, one can calculate that 
\begin{align*}
    \hat{\dHSIC}^2(\mcX_n^{\bb})=&\frac{m^2+(3m)^2}{n^2}K^1K^2+\frac{2m\cdot3m\varepsilon_1\varepsilon_2}{n^2}\\
    &+\bigg\{\frac{m^2+(3m)^2}{n^2}K^1+\frac{2m\cdot3m\varepsilon_1}{n^2}\bigg\}
    \bigg\{\frac{m^2+(3m)^2}{n^2}K^2+\frac{2m\cdot3m\varepsilon_2}{n^2}\bigg\}\\
    &-2m\bigg\{
    \frac{m^2}{n^3}K^1K^2+\frac{2m\cdot3m\varepsilon_1K^2}{n^3}+\frac{2m\cdot3m\varepsilon_2K^1}{n^3}+\frac{(3m)^2\varepsilon_1\varepsilon_2}{n^3}\bigg\}\\
    &-6m\bigg\{
    \frac{(3m)^2}{n^3}K^1K^2+\frac{2m\cdot3m\varepsilon_1K^2}{n^3}+\frac{2m\cdot3m\varepsilon_2K^1}{n^3}+\frac{m^2\varepsilon_1\varepsilon_2}{n^3}\bigg\}\\
    =&\frac{9}{64}K^1K^2+C_1\varepsilon_1\varepsilon_2+C_2\varepsilon_1+C_3\varepsilon_2, 
\end{align*}
where $C_1, C_2, C_3$ are constants that only depend on $K^1, K^2, n$. Similarly, a simple calculation shows
$$
\hat{\dHSIC}^2(\tilde{\mcX}_n^{\bb})=0.
$$
Combining them together yields that
$$
|\hat{\dHSIC}({\mcX}_n^{\bb})-\hat{\dHSIC}(\tilde{\mcX}_n^{\bb})|\geq\frac{3}{8}\sqrt{K^1K^2}+h(\varepsilon_1,\varepsilon_2,K^1,K^2,n),
$$
where $h(\varepsilon_1,\varepsilon_2,K^1,K^2,n)$ is some function of $\varepsilon_1,\varepsilon_2,K^1,K^2,n$, which goes to zero as $\varepsilon_1\to0$ and $\varepsilon_2\to0$ for any fixed $K^1,K^2,n$. By letting $\varepsilon_1,\varepsilon_2\to0$, the sensitivity of bootstrap dHSIC is at least $3\sqrt{K_0}/8$ for any fixed $n$, which is significantly larger than $4\sqrt{K_0}/n$ for permutation dHSIC.  
\end{proof}

The primary reason why permutation dHSIC exhibits manageable sensitivity lies in the fact that the sum of data points remains constant after permutation. This invariance effectively eliminates certain terms that could otherwise contribute to increased sensitivity. In contrast, bootstrap dHSIC lacks this property, leading to unbounded sensitivity.
However, it does not necessarily imply that permutation holds a universal advantage over bootstrap in the context of differential privacy. In certain scenarios, both approaches enjoy equivalent sensitivity. For instance, a simple calculation can show that the sensitivity of bootstrap MMD \citep{Gretton2012AKT} matches that of permutation MMD \citep[Lemma~2]{Kim2026DPP}. A more comprehensive comparison between these two techniques remains an open avenue for future research.

\subsection{Proof of Proposition~\ref{prop_bootstrap_inconsistency}}\label{app_a24}
\begin{proof}
In this subsection, we first demonstrate that the DP bootstrap dHSIC is inconsistent against some fixed alternatives, and next further show that it is inconsistent against all alternatives for some $\alpha,\xi_{\epsilon,\delta}$. Denote the sensitivity of $\hat\dHSIC(\mcX_n^{\bb})$ as $\Delta_T^{\bb}\geq3\sqrt{K_0}/8$ and noised $\hat\dHSIC(\mcX_n^{\bb_i})$ by \eqref{eq_22} as $M_i^{\bb}$. 
Consider an alternative $P$ close to the null, say $\dHSIC(P_{X^1,\dots,X^d})=\rho_0>0$. Then Lemma~\ref{le_ineq_e_dhsic} yields that $\hat\dHSIC(\mcX_n)\overset{p}{\to}\dHSIC(P_{X^1,\dots,X^d})=\rho_0$, and note that $\hat\dHSIC(\mcX_n^{\bb_i})\geq0$. 
Then 
\begin{align*}
   \lim_{n \to \infty} \mbP_P(M_0^{\bb_0} \leq M_1^{\bb_1})&=\lim_{n \to \infty} \mbP_P\bigg(\hat\dHSIC(\mcX_n) +\frac{2\Delta_T^{\bb}}{\xi_{\epsilon,\delta}}\zeta_0\leq \hat\dHSIC(\mcX_n^{\bb_1})+\frac{2\Delta_T^{\bb}}{\xi_{\epsilon,\delta}}\zeta_1\bigg)\\
   &=\lim_{n \to \infty} \mbP_P\bigg(\hat\dHSIC(\mcX_n)-\hat\dHSIC(\mcX_n^{\bb_1})\leq\frac{2\Delta_T^{\bb}}{\xi_{\epsilon,\delta}}\zeta_1-\frac{2\Delta_T^{\bb}}{\xi_{\epsilon,\delta}}\zeta_0\bigg)\\
   &\geq  \mbP\bigg(\frac{2\Delta_T^{\bb}}{\xi_{\epsilon,\delta}}\zeta_1-\frac{2\Delta_T^{\bb}}{\xi_{\epsilon,\delta}}\zeta_0\geq\rho_0\bigg),
\end{align*}
where the last probability can be made close to $1/2$ as the Laplace distribution is symmetric about $0$ and thus $\zeta_1-\zeta_0$ is symmetric about $0$. Take $\rho_0$ close to $0$ along with the (iii) in Lemma~\ref{lemma_dp_re}, the test is inconsistent for some fixed $P$ under $\alpha<1/2$. 

Next we show that under some conditions on $\alpha,\xi_{\epsilon,\delta}$, the test is inconsistent for all alternatives. In fact, as $\hat\dHSIC(\mcX_n)\leq\sqrt{2K_0}$ and $\hat\dHSIC(\mcX_n^{\bb_1})\geq0$, 
\begin{align*}
   \inf_P\lim_{n \to \infty} \mbP_P(M_0^{\bb_0} \leq M_1^{\bb_1})
   &=\inf_P\lim_{n \to \infty} \mbP_P\bigg(\hat\dHSIC(\mcX_n)-\hat\dHSIC(\mcX_n^{\bb_1})\leq\frac{2\Delta_T^{\bb}}{\xi_{\epsilon,\delta}}\zeta_1-\frac{2\Delta_T^{\bb}}{\xi_{\epsilon,\delta}}\zeta_0\bigg)\\
   &\geq \mbP\bigg(\sqrt{2K_0}\leq\frac{2\Delta_T^{\bb}}{\xi_{\epsilon,\delta}}\zeta_1-\frac{2\Delta_T^{\bb}}{\xi_{\epsilon,\delta}}\zeta_0\bigg)\\
   &\geq \mbP\bigg(\zeta_1-\zeta_0\geq\frac{4\sqrt{2}}{3}\xi_{\epsilon,\delta}\bigg).
\end{align*}
By denoting the last probability as $C_{\epsilon,\delta}$, (iii) in Lemma~\ref{lemma_dp_re} yields that the DP bootstrap dHSIC is inconsistent for all alternatives at any level $\alpha<C_{\epsilon,\delta}$. To be more concrete, taking $\xi_{\epsilon,\delta}=\epsilon+\log(1/(1-\delta))=1$ leads to $C_{\epsilon,\delta}>0.148$. 
\end{proof}

We close this part by pointing out the asymptotic level of DP bootstrap dHSIC. In fact, \citet[Theorem~2]{Pfister2018KernelbasedTF} shows that, under the null hypothesis, 
$$
n\hat\dHSIC(\mcX_n)\overset{d}{\to}\binom{2d}{2}\sum_{i=1}^{\infty}\lambda_iZ_i^2\quad \text{as } n\to\infty, 
$$
where $(\lambda_i)_{i\in\mathbb{N}}$ are the eigenvalues of a square integrable operator. It follows that $\hat\dHSIC(\mcX_n)=O_p(n^{-1})$. Then we have 
$$
M_0^{\bb_0}=\hat\dHSIC(\mcX_n)+\frac{2\Delta_T^{\bb}}{\xi_{\epsilon,\delta}}\zeta_0\to\frac{2\Delta_T^{\bb}}{\xi_{\epsilon,\delta}}\zeta_0 \quad \text{as } n\to\infty,
$$
provided that the privacy parameter $\xi_{\epsilon,\delta}$ is fixed. The proof is then completed by the proof for \citet[Theorem~4]{Pfister2018KernelbasedTF}.

\subsection{Proof for Theorem~\ref{th_mini_sepa_dpdhsic}}\label{app_th_mini_sepa_dpdhsic}
\begin{proof}
    With a little abuse of notations, $C_1, C_2,\dots$ refer to universal constants only associated with the upper bounds for the kernels. By the quantile representation of resampling test, the type II error can be expressed as
    $$
        \mbE[1-\phi_{\dpdHSIC}]=\mbP(\hat{p}_{\DP}>\alpha)=\mbP(M_0\leq q_{1-\alpha,B}),
    $$
    where $q_{1-\alpha,B}$ is the $1-\alpha$ quantile of $M_0,M_1,\dots,M_B$ and $M_i=\hat{\dHSIC}(\mcX^{\bpi_i}_n)+2\Delta_T\xi^{-1}_{\epsilon,\delta}\zeta_i$. The proof is divided into two steps, where the quantile is bounded in the first step and the type II error is controlled in the second step.

\textbf{Step 1 (Bounding the quantile).} We first bound the sample quantile by the population quantile. Let $q_{1-\alpha/6,\infty}$ be the limiting value of $q_{1-\alpha/6,B}$ with $B=\infty$. Then Lemma~S.20 in \cite{Kim2026DPP} yields that
$$
    q_{1-\alpha,B}\leq q_{1-\alpha/6,\infty}
$$
holds with probability at least $1-\beta/2$ provided that $B\geq2\alpha^{-1}\max\{3\log(1/\beta),1-\alpha\}$. By the property of the quantile, we have $q_{1-\alpha/6,\infty}\leq q_{1-\alpha/12,\infty}^a+q_{1-\alpha/12,\infty}^b$, where
$$q_{1-\alpha/12, \infty}^a := \inf \bigg\{ x \in \mbR : \frac{1}{|\bPi_n|} \sum_{\bpi \in \bPi_n} \mathbbm{1}(T(\mathcal{X}_n^{\bpi}) \leq x) \geq 1 - \alpha/12 \bigg\} 
\quad \text{and}$$
$$q_{1-\alpha/12, \infty}^b := 2 \Delta_T \xi_{\epsilon, \delta}^{-1} F_\zeta^{-1}(1 - \alpha/12),$$
where $F_\zeta^{-1}$ is the inverse cumulative distribution function of $\zeta\sim\Lap(0,1)$.

Next we bound $q_{1-\alpha/12, \infty}^a$ with the concentration inequality for permuted dHSIC. Lemma~\ref{le_dhsic_con_per} with simple calculations leads to $$
q_{1-\alpha/12, \infty}^a \leq C_1 \sqrt{\frac{1}{n} \max \bigg\{ \log\bigg(\frac{12}{\alpha}\bigg), \log^{1/2}\bigg(\frac{12}{\alpha}\bigg), 1 \bigg\}}.$$
Therefore, with probability at least $1-\beta/2$, it holds that
\begin{equation}\label{eq_01}
    q_{1-\alpha/6, B} \leq C_1 \sqrt{\frac{1}{n} \max \bigg\{ \log \bigg(\frac{12}{\alpha}\bigg), \log^{1/2}\bigg(\frac{12}{\alpha}\bigg), 1 \bigg\}} + 2 \Delta_T \xi_{\epsilon, \delta}^{-1} F_\zeta^{-1}(1 - \alpha/12). 
\end{equation}

\textbf{Step 2 (Bounding the type II error).} To begin with, we convert the calculations for the empirical dHSIC to the population version by Lemma~\ref{le_ineq_e_dhsic}, which demonstrates the exponential tail bound for their difference. 
Therefore, it holds that
$$
    | \widehat{\dHSIC}(\mathcal{X}_n) - \dHSIC_{\bk}(P_{X_1,\dots,X_d}) | 
    \geq C_2\sqrt{\frac{\log(8/\beta)}{n}}
$$
with probability at least $1-\beta/4$. By the definition of the c.d.f., it can be verified that
$$
    2\Delta_T\xi^{-1}_{\epsilon,\delta}\zeta_0>2 \Delta_T \xi_{\epsilon, \delta}^{-1} F_\zeta^{-1}(\beta/4)
$$
with probability being $1-\beta/4$. Combining the results above together, we have
\begin{align*}
    \mathbb{E}[1 - \phi_{\dpdHSIC}] 
    &\leq \mathbb{P} \bigg( \dHSIC_{\bk}(P_{X_1,\dots,X_d}) \leq q_{1-\alpha,B} + C_2 \sqrt{\frac{\log(8/\beta)}{n}} - 2 \Delta \xi_{\epsilon, \delta}^{-1} F_\zeta^{-1}(\beta/4) \bigg) + \frac{\beta}{2} \\
    &\leq \mathbb{P} \bigg( \dHSIC_{\bk}(P_{X_1,\dots,X_d}) \leq C_1 \sqrt{\frac{1}{n} \max \bigg\{ \log\bigg(\frac{12}{\alpha}\bigg), \log^{1/2}\bigg(\frac{12}{\alpha}\bigg), 1 \bigg\}} \\
    &\quad + 2 \Delta_T \xi_{\epsilon, \delta}^{-1} F_\zeta^{-1}(1-\alpha/12) + C_2 \sqrt{\frac{\log(8/\beta)}{n}} - 2 \Delta_T \xi_{\epsilon, \delta}^{-1} F_\zeta^{-1}(\beta/4)\bigg) + \beta.
\end{align*}
For $\alpha\in(0,1)$ and $\beta\in(0,1-\alpha)$, it holds
\begin{equation}\label{eq_08}
    \max \bigg\{ \log\bigg(\frac{12}{\alpha}\bigg), \log^{1/2}\bigg(\frac{12}{\alpha}\bigg), 1, \log\bigg(\frac{8}{\beta}\bigg)\bigg\}
    \leq C_3 \max\bigg\{\log\bigg(\frac{1}{\alpha}\bigg),\log\bigg(\frac{1}{\beta}\bigg)\bigg\}.
\end{equation}
On the other hand, simple calculations yield  
$$
    2 \Delta_T \xi_{\epsilon, \delta}^{-1} F_\zeta^{-1}(1-\alpha/12) - 2 \Delta_T \xi_{\epsilon, \delta}^{-1} F_\zeta^{-1}(\beta/4) \leq C_4 \max\bigg\{\log\bigg(\frac{1}{\alpha}\bigg),\log\bigg(\frac{1}{\beta}\bigg)\bigg\}.
$$
By combining the results above, we have
\begin{align*}
    &\quad\mathbb{E}[1 - \phi_{\dpdHSIC}] \\
    &\leq \mathbb{P} \bigg( \dHSIC_{\bk}(P_{X_1,\dots,X_d}) \leq C_5\sqrt{\frac{\max \{ \log(1/\alpha), \log(1/\beta) \}}{n}}+C_6 \frac{\max \{ \log(1/\alpha), \log(1/\beta) \}}{n \xi_{\epsilon,\delta}}\bigg) + {\beta}\\
    &\leq\beta,
\end{align*}
which completes the proof. 
\end{proof}

\subsection{Proof for Theorem~\ref{th_minimax_sepa}}\label{app_th_minimax_sepa}
\begin{proof}
    We establish lower bounds for the minimax separation $\rho^*_{\dHSIC}(\alpha, \beta, \epsilon, \delta, n)=\rho^*_{\dHSIC}$ in the non-private regime and in the privacy regime, separately. The main idea is Le Cam's two point method and coupling method \citep{Lecam1973ConvergenceOE}. 
\subsubsection{Non-privacy regime}
    We first show that $\rho^*_{\dHSIC}\geq C_{\eta}\min\{\sqrt{\log(1/(\alpha+\beta))/n},1\}$ in the non-private regime, where $C_{\eta}$ is a positive constant that only depends on $\eta^j$ for $j\in[d]$. Denote the set of the non-private $\alpha$-level test as
    $$
    \Phi_{\alpha, \infty} := \bigg\{ \phi : \sup_{P \in \mathcal{P}_0} \mathbb{E}_{P}[\phi] \leq \alpha \bigg\},
    $$
    and it includes $(\epsilon, \delta)$-DP tests. It is equivalent to verify that
    $$
    \inf\bigg\{ \rho > 0 : \inf_{\phi \in \Phi_{\alpha, \infty}} \sup_{P \in \mathcal{P}_{{\dHSIC}_{\bk}}(\rho)} \mathbb{E}_{P}[1 - \phi] \leq \beta \bigg\}\geq C_{\eta}\min\{\sqrt{\log(1/(\alpha+\beta))/n},1\}
    $$
    by the definition of the minimax separation. For this direction, it is sufficient to pick a joint distribution $P_{X^1,\dots,X^d,0}$ from $\mcP_{\dHSIC_{\bk}}(\tilde\rho)$ with
    \begin{equation}\label{eq_09}
        \tilde\rho=C_{\eta}\min\bigg\{\sqrt{\frac{\log(1/(\alpha+\beta))}{n}},1\bigg\}
    \end{equation}
    such that the type II error rate of any test is higher than $\beta$. On the other hand, for such a $\tilde\rho$ in \eqref{eq_09}, we have
\begin{align*}
    \inf_{\phi \in \Phi_{\alpha, \epsilon,\delta}} \sup_{P \in \mathcal{P}_{{\dHSIC}_{\bk}}(\rho)} \mathbb{E}_{P}[1 - \phi]&\geq\inf_{\phi \in \Phi_{\alpha, \infty}} \sup_{P \in \mathcal{P}_{{\dHSIC}_{\bk}}(\rho)} \mathbb{E}_{P}[1 - \phi]\\
    &\geq\inf_{\phi \in \Phi_{\alpha, \infty}} \mathbb{E}_{P_{X^1,\dots,X^d,0}}[1 - \phi]=1 - \sup_{\phi \in \Phi_{\alpha, \infty}} \mathbb{E}_{P_{X^1,\dots,X^d,0}}[\phi]\\
    &\overset{(i)}{\geq}1-\alpha-d_{\TV}(P_{X^1,\dots,X^d,0}^{\otimes n},\otimes_{j=1}^d P_{X^j,0}^{\otimes n})\\
    &\overset{(ii)}{\geq}1-\alpha-1+\frac{1}{2}\exp\{-d_{\KL}(P_{X^1,\dots,X^d,0}^{\otimes n}\|\otimes_{j=1}^d P_{X^j,0}^{\otimes n})\}
    \\
    &\overset{(iii)}{\geq}\frac{1}{2}\exp\{-n\times d_{\KL}(P_{X^1,\dots,X^d,0}\|\otimes_{j=1}^d P_{X^j,0})\}-\alpha,
\end{align*}
where $d_{\TV}$ and $d_{\KL}$ denote their total variation distance and KL-divergence, respectively. 
Here, Step $(i)$ is due to the fact that $\mathbb{E}_{P_{X^1,\dots,X^d,0}}[\phi]\leq \mathbb{E}_{\otimes_{j=1}^d P_{X^j,0}}[\phi]+d_{\TV}\leq \alpha+d_{\TV}$ holds for any unbiased test $\phi$. Step $(ii)$ holds by Bretaguolle-Huber inequality \citet[Lemma~B.4]{Canonne2022TopicsAT} and Step $(iii)$ is by the chain rule of the KL divergence.
Therefore, the minimax type II error is lower bounded by $\beta$ provided that $\alpha+\beta<c$ for any fixed $c<1/2$ as well as 
$$
d_{\KL}(P_{X^1,\dots,X^d,0}\|\otimes_{j=1}^d P_{X^j,0})\leq\frac{1}{n}\log\bigg(\frac{1}{2(\alpha+\beta)}\bigg).
$$
Hence it suffices to find an example of $P_{X^1,\dots,X^d,0}$ such that
\begin{align}\label{eq_10}
    &\dHSIC_{\bk}(P_{X^1,\dots, X^d}) \geq C_{\eta}\min \bigg\{ \sqrt{\frac{\log(1/(\alpha + \beta))}{n}}, 1 \bigg\}\quad \text{and}\\
    &\label{eq_11}d_{\KL}(P_{X^1,\dots,X^d,0}\|\otimes_{j=1}^d P_{X^j,0})\leq\frac{1}{n}\log\bigg(\frac{1}{2(\alpha+\beta)}\bigg).
\end{align}
To this end, we pick a discrete distribution and set most probability mass at two points. Consider $X^j\in\{x^j_1,x^j_2\}$ for $j\in[d]$. Suppose that the joint probabilities of $(X^1,\dots, X^d)$ are given by
$$
    \mbP(X^j=x^j_1,\text{for all } j\in[d]) =\mbP(X^j=x^j_2,\text{for all } j\in[d])=\frac{1}{2^d}+v,
$$
else the probabilities equal to ${1}/{2^d}-{v}/{(2^{d-1}-1)}$. Here, $v\in(0,(2^{d-1}-1)/2^d]$. It is sufficient to choose $v$ such that \eqref{eq_10} and \eqref{eq_11} hold. 
In this setting, it is clear that $(X^1,\dots, X^d)$ are not jointly independent. Simple calculations yield
\begin{align*}
    d_{\KL}(P_{X^1,\dots,X^d,0}\|\otimes_{j=1}^d P_{X^j,0})&\leq d_{\chi^2}(P_{X^1,\dots,X^d,0}\|\otimes_{j=1}^d P_{X^j,0})\\
    &=2^d\bigg[\bigg(\frac{1}{2^d}+v\bigg)^2\times2+\bigg(\frac{1}{2^d}-\frac{v}{2^{d-1}-1}\bigg)^2\times(2^d-2)\bigg]-1\\
    &=\frac{2^{2d}}{2^{d-1}-1}v^2.
\end{align*}
On the other hand, the squared population dHSIC can be bounded as
\begin{align*}
    &\dHSIC_{\bk}(P_{X^1,\dots, X^d,0})^2 \\
    &\overset{(i)}{=} \mbE\bigg\{ \prod_{j=1}^d k^j(X_1^j, X_2^j) \bigg\} + \mbE \bigg\{ \prod_{j=1}^d k^j(X_{2j-1}^j, X_{2j}^j) \bigg\} - 2  \mbE \bigg\{ \prod_{j=1}^d k^j(X_1^j, X_{j+1}^j) \bigg\}\\
    &\overset{(ii)}{=}\frac{2^d}{(2^{d-1}-1)^2}v^2(2^{d-1}S_1-S_2)\\
    &\overset{(iii)}{\geq}\frac{2^d}{2^{d-1}-1}v^2\prod_{j=1}^d\eta^j,
\end{align*}
where $S_1:=\prod_{j=1}^d\kappa^j(0)+\prod_{j=1}^d\kappa^j(x^j_1-x^j_2)$ and $S_2:=\prod_{j=1}^d\{\kappa^j(0)+\kappa^j(x^j_1-x^j_2)\}$. Step~$(i)$ holds by Lemma~\ref{le_sq_dhsic} and direct algebra calculations lead to Step~$(ii)$. Choose $x^j_1-x^j_2=x_0^j$ for $j\in[d]$ and note the fact that $2^{d-1}S_1-S_2$ can be divided into $2^{d-1}$ terms, which can be written as $\prod_{l}\{\prod_{j\in A_l}\kappa^j(0)-\prod_{j\in A_l}\kappa^j(x^j_1-x^j_2)\}$ where $A_l$'s are a partition of [d]. Step~$(iii)$ holds by 
$$
\prod_{l}\bigg\{\prod_{j\in A_l}\kappa^j(0)-\prod_{j\in A_l}\kappa^j(x^j_1-x^j_2)\bigg\}\geq
\prod_{l}\prod_{j\in A_l}\eta^j=\prod_{j=1}^d\eta^j.
$$
Based on the above inequalities, condition \eqref{eq_11} is fulfilled by picking 
$$
v=\min\bigg\{\sqrt{\frac{2^{d-1}-1}{2^{2d}n}\log\bigg(\frac{1}{2(\alpha+\beta)}\bigg)},\frac{2^{d-1}-1}{2^{d}}\bigg\}.
$$
It is easy to verify that \eqref{eq_10} holds. Therefore, $\rho^*_{\dHSIC}\geq C_{\eta}\min\{\sqrt{\log(1/(\alpha+\beta))/n},1\}$.

\subsubsection{Privacy regime}
To establish the lower bound under privacy constraints, it is sufficient to pick a joint distribution $P_{X^1,\dots,X^d,0}$ from $\mcP_{\dHSIC_{\bk}}(\tilde\rho)$ with
    \begin{equation}\label{eq_12}
        \tilde\rho=C_{\eta}\min\bigg\{\frac{\log({1}/{\beta})}{n\xi_{\epsilon,\delta}},\frac{1}{n\xi_{\epsilon,\delta}},1\bigg\}
    \end{equation}
    such that the type II error rate of any test is higher than $\beta$.
Bounding the type II error rate in the privacy regime can be mainly based on the private coupling idea, which was introduced in \citet[Theorem 1]{Acharya2018DifferentiallyPT}. 
In particular, let $(\mcX_n,\mcX_n')$ be a coupling between $P_{X^1,\dots,X^d,0}^{\otimes n}$ and $\otimes_{j=1}^d P_{X^j,0}^{\otimes n}$ with $D=10\mbE[d_{\ham}(\mcX_n,\mcX_n')]$. 
Then for $\alpha+\beta<c$ for any fixed $c<1/2$, we have
\begin{align*}
    \inf_{\phi \in \Phi_{\alpha, \epsilon,\delta}} \sup_{P \in \mathcal{P}_{{\dHSIC}_{\bk}}(\rho)} \mathbb{E}_{P}[1 - \phi]&\geq\inf_{\phi \in \Phi_{\alpha, \epsilon,\delta}} \mbE_{P_{X^1,\dots,X^d,0}}[1 - \phi]\\
    &\overset{(i)}{\geq} e^{-D\epsilon}(0.9-\alpha)-D\delta\\
    &>(0.4+\beta)e^{-D\epsilon}-D\delta\\
    &\geq(0.2+\beta)e^{-D\epsilon},
\end{align*}
provided that the condition $D\delta\leq0.2e^{-D\epsilon}$ holds. Here Step~$(i)$ holds by the proof of Theorem~1 in \cite{Acharya2021DifferentiallyPA}.
Therefore, it is desired that 
$$
\inf_{\phi \in \Phi_{\alpha, \epsilon,\delta}} \sup_{P \in \mathcal{P}_{{\dHSIC}_{\bk}}(\rho)} \mathbb{E}_{P}[1 - \phi]\geq(0.2+\beta)e^{-D\epsilon}\geq\beta,
$$
which is fulfilled when
\begin{equation}\label{eq_13}
    D\leq\frac{1}{\epsilon}\log\bigg(\frac{1}{5\beta}+1\bigg)\quad\text{and}\quad D\delta e^{D\epsilon}\leq\frac{1}{5}.
\end{equation}
Without loss of generality, assuming that $\delta>0$, we can show that the second condition holds when $D\leq{1}/{(5\epsilon+5\delta)}$.
In this case, letting $x=\epsilon/\delta$, we have
$$
D\delta e^{D\epsilon}\leq\frac{\delta}{5(\epsilon+\delta)}e^{\frac{\epsilon}{5(\epsilon+\delta)}}=\frac{x}{5(x+1)}e^{\frac{1}{5(x+1)}},
$$
which no more than $1/5$ by simple calculations. Thus \eqref{eq_13} is particularly satisfied when
\begin{equation}\label{eq_14}
    D\leq\frac{1}{5(\epsilon+\delta)}\min\bigg\{\log\bigg(\frac{1}{5\beta}+1\bigg),1\bigg\}.
\end{equation}
Furthermore, by \citet[Lemma~20]{Acharya2021DifferentiallyPA}, we have 
$$
\mbE[d_{\ham}(\mcX_n,\mcX_n')]=n\times d_{\TV}(P_{X^1,\dots,X^d,0},\otimes_{j=1}^d P_{X^j,0})=\frac{n}{2}\|P_{X^1,\dots,X^d,0}-\otimes_{j=1}^d P_{X^j,0}\|_1.
$$
Then the condition \eqref{eq_14} becomes
\begin{equation}\label{eq_15}
    \|P_{X^1,\dots,X^d,0}-\otimes_{j=1}^d P_{X^j,0}\|_1\leq\frac{1}{25n(\epsilon+\delta)}\min\bigg\{\log\bigg(\frac{1}{5\beta}+1\bigg),1\bigg\}.
\end{equation}
We also consider the discrete distribution in the non-private regime. A direct calculation yields that $\|P_{X^1,\dots,X^d,0}-\otimes_{j=1}^d P_{X^j,0}\|_1=4v$. Therefore, we take 
$$
v= \min\bigg\{\frac{1}{100n(\epsilon+\delta)}\log\bigg(\frac{1}{5\beta}+1\bigg),\frac{1}{100n(\epsilon+\delta)},\frac{2^{d-1}-1}{2^d}\bigg\}.
$$
Noting that
$$
    v\gtrsim\min\bigg\{\frac{\log({1}/{\beta})}{n\xi_{\epsilon,\delta}},\frac{1}{n\xi_{\epsilon,\delta}},1\bigg\}\asymp\min\bigg\{\frac{1}{n\xi_{\epsilon,\delta}},1\bigg\}
$$
and combining it with lower bound for squared population dHSIC in the non-private regime yields $\rho^*_{\dHSIC}\geq C_{\eta}\min\{{\log({1}/{\beta})}/{n/\xi_{\epsilon,\delta}},{1}/{n/\xi_{\epsilon,\delta}},1\}$. We then complete the proof by combining the two parts.
\end{proof}

\subsection{Proof for Theorem~\ref{th_L2separation}}\label{app_th_L2separation}
\begin{proof}
    The proof structure 
    is that we first analyze the difference between the U- and V-statistics, and then leverage the existing results of the U-statistic. 
    More attention should be paid to the difference between the V- and U-statistics as the kernels are $(2d)$th-order. 
    We treat $\alpha, \beta, s,R,M, p_1,\dots,p_d$ as fixed constants throughout the proof. 
    Without loss of generality, we consider $d\geq3$ as $d=2$ is developed by \citet[Theorem~14]{Kim2026DPP}. 
    We denote $\hat\dHSIC^2(\mcX_n)$ by $V_{\dHSIC}$ and corresponding U-statistic as $U_{\dHSIC}$. The corresponding statistics based on the permuted data $\mcX_n^{\bpi}$ as $V_{\bpi,\dHSIC}$ and $U_{\bpi,\dHSIC}$, respectively. We keep the notations in Theorem~\ref{sen_dhsic}, i.e., $K^j= \prod_{i=1}^{p_j}1/ (\sqrt{2\pi} \lambda_{j,i})$ for $j\in[d]$ and $K_0$ denotes their product. 

    Note that the difference between $V_{\bpi,\dHSIC}$ and $U_{\bpi,\dHSIC}$ is bounded by $C_1K_0/n$ in the proof of Lemma~\ref{le_dhsic_con_per}, we devote more efforts to the bias. Specifically, we divide the difference into two parts. The first term $D_1$ is invariant to any permutation $\bpi$, and is bounded by $K_0/n$ in the sense of order. The remaining term $D_2$ has a sharper lower bound than $K_0/n$. 
    Formally, the difference between $V_{\pi,\dHSIC}$ and $U_{\pi,\dHSIC}$ can be written as
    $$
    V_{\dHSIC}-U_{\dHSIC}=D_1+D_2,
    $$
    where 
    \begin{align*}
        D_1&=\frac{n^{d-1}-1}{n^d}K_0-\sum_{j=1}^d\frac{K_0}{K^jn^{d+1}}\sum_{\bi^n_2}k^j(X^j_{i_1},X^j_{i_2}) \\
        D_2&\asymp -\frac{1}{n^{3}}\sum_{\bi^n_{2}}\prod_{j=1}^dk^j(X^j_{i_{1}},X^j_{i_{2}})+ \frac{1}{n^{d+2}}\sum_{\bi^n_{d+1}}\prod_{j=1}^dk^j(X^j_{i_{1}},X^j_{i_{2j}})-\frac{1}{n^{2d+1}}\sum_{\bi^n_{2d}}\prod_{j=1}^dk^j(X^j_{i_{2j-1}},X^j_{i_{2j}})\\
        &+\sum_{t=1}^{d-2}\sum_{A\subset[d]:|A|=t}\bigg\{
        \frac{1}{n^{2d}}\sum_{\bi^n_{2d-t}}\prod_{j\in [d]\backslash A}k^j(X^j_{i_{2j-1}},X^j_{i_{2j}})
        -\frac{1}{n^{d+1}}\sum_{\bi^n_{d+1-t}}\prod_{j\in [d]\backslash A}k^j(X^j_{i_{1}},X^j_{i_{2j}})
        \bigg\}\prod_{j\in A}K^j. 
    \end{align*}
Combining the analysis above, the permutation distribution of $V_{\bpi,\dHSIC}$ is equivalent to the permutation distribution of $U_{\bpi,\dHSIC}+D_1+D_{2,\bpi}$ where $D_{2,\bpi}$ has the same form as $D_2$ but computed from permuted data $\mcX_n^{\bpi}$. Denoting the sensitivity of $\sqrt{V_{\bpi,\dHSIC}}$ as $\Delta_{V^1/2}$, Lemma~\ref{lemma_repre1} yields that dpdHSIC test rejects the null if and only if 
$$
\sqrt{V_{\bpi,\dHSIC}}+\frac{2\Delta_{V^1/2}}{\xi_{\epsilon,\delta}}\zeta_0>q_{1-\alpha,B},
$$
where $q_{1-\alpha,B}$ is $1-\alpha$ quantile of $\{\sqrt{V_{\bpi_i,\dHSIC}}+{2\Delta_{V^1/2}}{\xi_{\epsilon,\delta}^{-1}}\zeta_i\}$. 
Denote the conditional distribution of $\sqrt{V_{\bpi,\dHSIC}}$ given $\mcX_n$ as $q^a_{1-\alpha/12,\infty}$. 
By the proof for Theorem~\ref{th_mini_sepa_dpdhsic} in Appendix~\ref{app_th_mini_sepa_dpdhsic}, the type II error of the dpdHSIC can be bounded as 
\begin{align}\label{eq_20}
    &\mathbb{P}(\sqrt{V_{\dHSIC}} + 2\Delta_{V^{1/2}} \xi_{\epsilon,\delta}^{-1} \zeta_0 \leq q_{1-\alpha,B})\nonumber\\
    \leq\ &\mathbb{P}(\sqrt{V_{\dHSIC}} \leq q^a_{1-\alpha/12,\infty} + 14\Delta_{V^{1/2}} \xi^{-1}_{\epsilon,\delta} \max\{\log(1/\alpha), \log(1/\beta)\}) + 5\beta/8.
\end{align}
Similar to the structure in the proof for Theorem~\ref{th_mini_sepa_dpdhsic} in Appendix~\ref{app_th_mini_sepa_dpdhsic}, the remaining proof first focuses on bounding $q^a_{1-\alpha/12,\infty}$ and then continues to upper bound the type II error. In what follows, the  notation $C_1,C_2,\dots$ represent universal constants only associated with $\alpha, \beta, s,R,M, p_1,\dots,p_d$.

\textbf{Step 1 (Bounding the quantile).} 
A simple calculation yields 
\begin{align*}
    q^a_{1-\alpha/12,\infty}&= \sqrt{\text{Quantile}_{1-\alpha/12}(\{U_{\bpi_i,\dHSIC}+D_{2,\bpi_i}\}_{i=0}^{\infty}+D_1)}\\
    &\leq \sqrt{\text{Quantile}_{1-\alpha/24}(\{U_{\bpi_i,\dHSIC}\})_{i=0}^{\infty}+\text{Quantile}_{1-\alpha/24}(\{D_{2,\bpi_i}\}_{i=0}^{\infty})+D_1}.
\end{align*}
Combining the proof for Lemma~\ref{le_u_dhsic_con_per} and \citet[Theorem~5.1]{Kim2022MinimaxOO}, as well as the proof for \citet[Proposition~2]{Albert2022AdaptiveTO}, we have
$$
\mathbb{E}_{\bpi}[U_{\bpi,\dHSIC} | \mathcal{X}_n] = 0 \quad \text{and} \quad
\mathbb{E}[\Var_{\bpi}(U_{\bpi,\dHSIC} | \mathcal{X}_n)] \leq {C_1}{n^{-2} \lambda_0^{-1}}.
$$
Then the Chebyshev's inequality shows the following inequality holds with probability at least $1-\beta/8$
\begin{equation}\label{eq_17}
    \text{Quantile}_{1-\alpha/24}(\{U_{\bpi_i,\dHSIC}\})_{i=0}^{\infty}\leq C_2n^{-1}\lambda_0^{-1/2}. 
\end{equation}
For the $1-\alpha/24$ quantile of $\{D_{2,\bpi_i}\}_{i=0}^{\infty}$, note that
$$
    D_{2,\bpi_i}\lesssim\frac{1}{n^{d+2}}\sum_{\bi^n_{d+1}}\prod_{j=1}^dk^j(X^j_{\pi_{i_{1}}^j},X^j_{\pi_{i_{2j}}^j})+\sum_{t=1}^{d-2}\sum_{A\subset[d]:|A|=t}
    \frac{1}{n^{2d}}\sum_{\bi^n_{2d-t}}\prod_{j\in [d]\backslash A}k^j(X^j_{\pi_{i_{2j-1}}^j},X^j_{\pi_{i_{2j}}^j})\prod_{j\in A}K^j.
$$
We take the expectation over $\bpi$ and $P$ on the two sides and denote the expectations of two terms by $E_1$ and $E_2$, respectively. Similar to the proof of Theorem~14 in \cite{Kim2026DPP}, we have $E_1\lesssim n^{-1}$ under the assumptions of $\max(\|p_{X^1,\dots, X^d}\|_{L_\infty}, \|p_{X^1}\dots p_{X^d}\|_{L_\infty}) \leq M$. A simple calculation with similar technique shows that $E_2\lesssim n^{-1}K_0$. 
An application of Markov's inequality yields
\begin{equation}\label{eq_18}
    \text{Quantile}_{1-\alpha/24}(\{D_{2,\bpi_i}\}_{i=0}^{\infty}) \leq C_3n^{-1}K_0,
\end{equation}
with probability at least $1-\beta/8$. A direct calculation shows that $D_1\lesssim n^{-1}K_0$.

Combining the pieces together, we have 
\begin{equation}\label{eq_19}
    q^a_{1-\alpha/12,\infty}\leq \frac{C_4}{\sqrt{n\lambda_0}}
\end{equation}
with probability at least $1-\beta/4$ as $\lambda_0\leq1$ and $\lambda_0\asymp K_0^{-1}$. 

\textbf{Step 2 (Bounding the type II error).} Back to \eqref{eq_20}, we have
\begin{align*}
&\mathbb{P}\Big( \sqrt{V_{\dHSIC}} \leq q^a_{1-\alpha/12,\infty} + 14 \Delta_{V^{1/2}} \xi^{-1}_{\epsilon, \delta} \max\{ \log(1/\alpha), \log(1/\beta) \} \Big) \\
= &\ \mathbb{P}\Big( V_{\dHSIC} \leq (q^a_{1-\alpha/12,\infty})^2 + 14^2 \Delta^2_{V^{1/2}} \xi^{-2}_{\epsilon, \delta} \max\{ \log^2(1/\alpha), \log^2(1/\beta) \} \\
&\quad + 28 q^a_{1-\alpha/12,\infty} \Delta_{V^{1/2}} \xi^{-1}_{\epsilon, \delta} \max\{ \log(1/\alpha), \log(1/\beta) \}  \Big) \\
=\ &\mathbb{P}\Big( U_{\dHSIC} \leq \text{Quantile}_{1-\alpha/24}(\{U_{\bpi_i, \dHSIC}\}_{i=0}^\infty) + \text{Quantile}_{1-\alpha/24}(\{D_{2, \bpi_i}\}_{i=0}^\infty) - D_2 \\
&\quad + 14^2 \Delta^2_{V^{1/2}} \xi^{-2}_{\epsilon, \delta} \max\{ \log^2(1/\alpha), \log^2(1/\beta) \} + 28 q^a_{1-\alpha/12, \infty} \Delta_{V^{1/2}} \xi^{-1}_{\epsilon, \delta} \max\{ \log(1/\alpha), \log(1/\beta) \} \Big),
\end{align*} 
where the second equality uses the identity $V_{\dHSIC} = U_{\dHSIC} + D_1 + D_2$.  By Proposition~\ref{sen_dhsic}, the sensitivity of $\sqrt{V_{\dHSIC}}$ is bounded as
$$
\Delta_{V^{1/2}} \leq {C_5}{n^{-1} \lambda^{-1/2}}.
$$
By Markov's inequality, we have
\begin{equation}\label{eq_21}
    |D_2|\leq\frac{C_6}{n}+\sum_{\bi^n_1}\frac{C_7K^j}{n}+\sum_{\bi^n_2}\frac{C_8K^{j_1}K^{j_2}}{n^2}+\dots+\sum_{\bi^n_{d-2}}\frac{C_9K^{j_1}\dots K^{j_{d-2}}}{n^{d-2}}
\end{equation}
with probability at least $1-\beta/16$. A sharper upper bound for $\text{Quantile}_{1-\alpha/24}(\{D_{2, \bpi_i}\}_{i=0}^\infty)$ can be derived in the same way. 
For $d\geq4$, combining the pieces \eqref{eq_20}, \eqref{eq_17}, \eqref{eq_18}, \eqref{eq_19} and \eqref{eq_21} together, the type II error can be bounded as  
\begin{align*}
    &\mathbb{P}(\sqrt{V_{\dHSIC}} + 2\Delta_{V^{1/2}} \xi_{\epsilon,\delta}^{-1} \zeta_0 \leq q_{1-\alpha,B})\\
    \leq\ &\mathbb{P}\bigg({U_{\dHSIC}} \leq \frac{C_{10}}{n \sqrt{\lambda_0}} + \frac{C_{11}}{n^2 \lambda_0 \xi^2_{\epsilon, \delta}} + \frac{C_{12}}{n^{3/2} \lambda_0 \xi_{\epsilon, \delta}}+\sum_{t=1}^{d-2}\sum_{\bi^d_t}\frac{C_{13}}{n^t\lambda_{i_1}\dots\lambda_{i_t}}\bigg) + \frac{15}{16}\beta.
\end{align*}
Next we establish the
upper bound the first probability under the sufficient condition on $L_2$-norms. 

A slight modification of \citet[Lemma~1]{Albert2022AdaptiveTO} yields that a sufficient condition for the first probability in the above display to be less than $\beta/16$ is
\begin{align*}
\dHSIC^2_{\bk}(P_{X^1,\dots,X^d}) &\geq \sqrt{\Var(U_{\dHSIC})} + \frac{C_{10}}{n \sqrt{\lambda_0}} + \frac{C_{11}}{n^2 \lambda_0 \xi^2_{\epsilon, \delta}} + \frac{C_{12}}{n^{3/2} \lambda_0 \xi_{\epsilon, \delta}}+\sum_{t=1}^{d-2}\sum_{\bi^d_t}\frac{C_{13}}{n^t\lambda_{i_1}\dots\lambda_{i_t}}.
\end{align*}
With a little abuse of notations, by writing $\psi = p_{X^1,\dots,X^d} - p_{X^1}\dots p_{X^d}$ and the convolution of $\psi$ and $\bk$ as $\psi*\bk$, the proofs for \citet[Theorem~1 and Proposition~4]{Albert2022AdaptiveTO} show that the previous condition is implied by
$$
\|\psi\|_{L_2}^2 \geq \|\psi - \psi * \bk\|_{L_2}^2 + C_{14} \bigg\{ \frac{1}{n \sqrt{\lambda_0}} + \frac{1}{n^2 \lambda_0 \xi^2_{\epsilon, \delta}} + \frac{1}{n^{3/2} \lambda_0 \xi_{\epsilon, \delta}} +\sum_{t=1}^{d-2}\sum_{\bi^d_t}\frac{1}{n^t\lambda_{i_1}\dots\lambda_{i_t}}\bigg\}.
$$
Finally, a small modification of the proof of \citet[Lemma~3]{Albert2022AdaptiveTO} shows that a sufficient condition for the inequality is 
$$
\|\psi\|_{L_2}^2 \geq C_{15} \bigg\{\sum_{j=1}^d\sum_{i=1}^{p_j} \lambda_{j,i}^{2s}+ \frac{1}{n \sqrt{\lambda_0}} + \frac{1}{n^2 \lambda_0 \xi^2_{\epsilon, \delta}} + \frac{1}{n^{3/2} \lambda_0 \xi_{\epsilon, \delta}} +\sum_{t=1}^{d-2}\sum_{\bi^d_t}\frac{1}{n^t\lambda_{i_1}\dots\lambda_{i_t}}\bigg\},
$$
which is the desired result. 
\end{proof}

\section{Proofs for Results in Appendix~\ref{sec_u_dhsic}}\label{app_proof_u}
\subsection{Proof for Lemma~\ref{le_sen_u}}
\begin{proof}
    For the upper bound, we write the permutation U-statistic as $U_{\bpi,\dHSIC}=U_{\bpi,\dHSIC}^a+U_{\bpi,\dHSIC}^b-2U_{\bpi,\dHSIC}^c$, where
    \begin{align*}
        U_{\bpi,\dHSIC}^a&:=\frac{(n-2)!}{n!} \sum_{\bi_2^n} \prod_{j=1}^dk^j(X_{\pi_{i_1}^j}^j, X_{\pi_{i_2}^j}^j),  \\
        U_{\bpi,\dHSIC}^b&:= \frac{(n-2d)!}{n!} \sum_{\bi_{2d}^n} \prod_{j=1}^dk^j(X_{\pi_{i_{2j-1}}^j}^j, X_{\pi_{i_{2j}}^j}^j), \\
        U_{\bpi,\dHSIC}^c&:= \frac{(n-d-1)!}{n!} \sum_{\bi_{d+1}^n} \prod_{j=1}^dk^j(X_{\pi_{i_1}^j}^j, X_{\pi_{i_{j+1}}^j}^j).
    \end{align*}
    To establish an upper bound for the global sensitivity, we first consider a neighboring dataset $\tilde\mcX_n$ and denote the U-statistic of dHSIC based on $\tilde \mcX_n$ as $\tilde U_{\bpi,\dHSIC}=\tilde U_{\bpi,\dHSIC}^a+\tilde U_{\bpi,\dHSIC}^b-2\tilde U_{\bpi,\dHSIC}^c$. Then the difference between $U_{\bpi,\dHSIC}$ and $\tilde U_{\bpi,\dHSIC}$ can be bounded as 
    \begin{align*}
        |U_{\bpi,\dHSIC}-\tilde U_{\bpi,\dHSIC}|&\leq|U_{\bpi,\dHSIC}^a-\tilde U_{\bpi,\dHSIC}^a|+|U_{\bpi,\dHSIC}^b-\tilde U_{\bpi,\dHSIC}^b|+2|U_{\bpi,\dHSIC}^c-\tilde U_{\bpi,\dHSIC}^c|\\
        &\overset{(*)}{\leq}\frac{2d}{n}K_0+\frac{2d^2}{n}K_0+\frac{2d(d+1)}{n}K_0.
    \end{align*}
    Here, we explain $|U_{\bpi,\dHSIC}^a-\tilde U_{\bpi,\dHSIC}^a|{\leq}2dK_0/n$ in detail and the other two hold similarly in step $(*)$. In fact, we consider 
    \begin{equation}\label{eq_29}
        |U_{\bpi,\dHSIC}^a-\tilde U_{\bpi,\dHSIC}^a|\leq\frac{1}{n(n-1)} \sum_{\bi_2^n} \bigg|\prod_{j=1}^dk^j(X_{\pi_{i_1}^j}^j, X_{\pi_{i_2}^j}^j)-\prod_{j=1}^dk^j(\tilde X_{\pi_{i_1}^j}^j, \tilde X_{\pi_{i_2}^j}^j)\bigg|. 
    \end{equation}
    Without loss of generality, suppose that we change $\bX_1$ into $\bX_1'$, which leads to at most $d$ components changing. This implies that at most $2d(n-1)$ terms can be non-zero in the summation on the right-hand side of \eqref{eq_29}. Each non-zero term is bounded by $K_0$, which follows the desired result.  

    For the lower bound, we write $\Delta_T^d$ as $\Delta_T$ for the sensitivity of U-statistic dHSIC testing $d$ vectors with kernels $K^1,\dots,K^d$, and write $\Delta_T^j$ similarly for $j\in[d]$. We claim that $\Delta_T^d\geq {K^d}\Delta_T^{d-1}$ by the same construction as Appendix~\ref{app_a22}. Then the conclusion holds by \citet[Lemma~S.8]{Kim2026DPP}.   
\end{proof}

\subsection{Proof for Theorem~\ref{th_a1}}
\begin{proof}
    By the proof of Lemma~\ref{le_dhsic_con_per}, we have
    $$
    |U_{\dHSIC}-V_{\dHSIC}|\lesssim\frac{K_0}{n}.
    $$
    By treating $K_0$ as some fixed number, Lemma~\ref{le_ineq_e_dhsic} yields that 
    $$
    V_{\dHSIC}=\{\dHSIC_{\bk}(P_{X^1,\dots,X^d})+R_n\}^2\quad \text{where }R_n=O_p(n^{-1/2}).
    $$
    Following the same proof strategy as for \citet[Theorem~S.6]{Kim2026DPP} shows 
    $$
    \limsup_{n\to\infty} \inf_{P_{X^1,\dots, X^d} \in \mcP_{\dHSIC_{\bk}}(\rho)} \mathbb{E}_{P_{X^1,\dots, X^d}}[\phi^u_{\dpdHSIC}] \le \alpha,
    $$
    which completes the proof. 
\end{proof}

\subsection{Proof for Theorem~\ref{th_a2}}
\begin{proof}
    Denote by $C_1, C_2, \dots$ constants that may depend on $\alpha, \beta, R, M, d_Y, d_Z$. Following the proofs of Theorem~\ref{th_L2separation} along with the sensitivity result for $U_{\dHSIC}$ in Lemma~\ref{le_sen_u}, we may arrive at the point where the type II error of $\phi^u_{\dpdHSIC}$ is upper bounded as
$$
\mathbb{E}[1 - \phi^u_{\dpdHSIC}] \le \mathbb{P}\left( U_{\dHSIC} \le \frac{C_1}{n\sqrt{\lambda_0}} + \frac{C_2}{n\lambda_0 \xi_{\epsilon, \delta}} \right) + \frac{15}{16}\beta
$$
We then use the proofs of \citet[Theorem~1]{Albert2022AdaptiveTO} and \citet[Lemma~3]{Albert2022AdaptiveTO}, and show that the probability term in the above display is less than or equal to $\beta/16$ once
$$
\|\psi\|_{L_2}^2 \ge C_3 \left\{ \sum_{i=1}^{d_Y} \lambda_i^{2s} + \sum_{i=1}^{d_Z} \mu_i^{2s} + \frac{1}{n\sqrt{\lambda_0}} + \frac{1}{n\lambda_0 \xi_{\epsilon, \delta}} \right\},
$$
which completes the proof.
\end{proof}

\section{Technical Lemmas}\label{app_lemma}
\subsection{Lemmas on Resampling Tests}\label{app_lemma_re}
We first present three lemmas used in DP resampling tests. 
The following one can be found in \citet[Lemma~S.16]{Kim2026DPP}, we also provide a proof here for completeness. 
\begin{lemma}[Quantile Representation]
\label{lemma_repre1}
    For any $\alpha \in [0, 1]$ and dataset $\{X_1, \dots, X_{n+1}\}$, we have the identity
$$
\mathbbm{1}\bigg(\frac{1}{n+1} \bigg(\sum_{i=1}^n 1(X_{n+1} \leq X_i) + 1\bigg) \leq \alpha\bigg) = \mathbbm{1}(X_{n+1} > q_{1-\alpha}),
$$
where $q_{1-\alpha}$ is the $1-\alpha$ quantile of $\{X_1, \dots, X_{n+1}\}$ given by
$$
q_{1-\alpha} = \inf\bigg\{t \in \mathbb{R} : \frac{1}{n+1} \sum_{i=1}^{n+1} \mathbbm{1}(X_i \leq t) \geq 1-\alpha\bigg\}.
$$
Moreover, by letting $X_{(\lceil(1-\alpha)(n+1)\rceil)}$ denote the $\lceil(1-\alpha)(n+1)\rceil$th order statistic of $\{X_1, \dots, X_{n+1}\}$ and $X_{(0)} = -\infty$, we have $q_{1-\alpha} = X_{(\lceil(1-\alpha)(n+1)\rceil)}$.
\end{lemma}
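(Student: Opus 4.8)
The plan is to reduce each of the two indicators to an elementary counting statement about the order statistics of the pooled sample $\{X_1,\dots,X_{n+1}\}$ and then match them. Writing $m:=\lceil(1-\alpha)(n+1)\rceil$, I would first record the bookkeeping identity
\[
\sum_{i=1}^n \mathbbm{1}(X_{n+1}\le X_i)+1=\sum_{i=1}^{n+1}\mathbbm{1}(X_{n+1}\le X_i)=\#\{i\in[n+1]:X_i\ge X_{n+1}\},
\]
where the extra $+1$ is precisely the self-term $\mathbbm{1}(X_{n+1}\le X_{n+1})$. Hence the left indicator equals $\mathbbm{1}(r\le\alpha(n+1))$ with $r:=\#\{i:X_i\ge X_{n+1}\}$, and passing to the complementary count $s:=\#\{i:X_i<X_{n+1}\}=(n+1)-r$ turns this into $\mathbbm{1}\big(s\ge(1-\alpha)(n+1)\big)$. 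Because $s$ is an integer, the threshold may be replaced by its ceiling, so the left-hand side equals $\mathbbm{1}(s\ge m)$.

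Second, I would establish the quantile identity $q_{1-\alpha}=X_{(m)}$. Let $\hat F(t):=(n+1)^{-1}\sum_{i=1}^{n+1}\mathbbm{1}(X_i\le t)$ be the empirical CDF, which is non-decreasing and right-continuous, so the infimum defining $q_{1-\alpha}$ is attained. On one hand $\hat F(X_{(m)})\ge m/(n+1)\ge 1-\alpha$, giving $q_{1-\alpha}\le X_{(m)}$; on the other hand, for any $t<X_{(m)}$ at most $m-1$ of the $X_i$ can be $\le t$ (only the first $m-1$ order statistics can lie strictly below $X_{(m)}$), so $\hat F(t)\le(m-1)/(n+1)<1-\alpha$, the last strict inequality holding because $m-1<(1-\alpha)(n+1)$ by the definition of the ceiling. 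This forces $q_{1-\alpha}\ge X_{(m)}$, and the two bounds combine.

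Finally, I would connect the two sides through the equivalence $s\ge m\iff X_{(m)}<X_{n+1}$: if $X_{(m)}<X_{n+1}$ then the $m$ smallest order statistics $X_{(1)},\dots,X_{(m)}$ all lie strictly below $X_{n+1}$ and, since their values are $\le X_{(m)}<X_{n+1}$, none of them is the term $X_{n+1}$ itself, whence $s\ge m$; conversely $s\ge m$ means the $m$ smallest pooled values are strictly below $X_{n+1}$, so $X_{(m)}<X_{n+1}$. Chaining the steps yields
\[
\mathbbm{1}(\hat p\le\alpha)=\mathbbm{1}(s\ge m)=\mathbbm{1}(X_{n+1}>X_{(m)})=\mathbbm{1}(X_{n+1}>q_{1-\alpha}),
\]
as claimed. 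The only delicate point, and the one I would write out most carefully, is the treatment of ties: when several $X_i$ coincide with $X_{(m)}$ or with $X_{n+1}$ I must ensure the strict versus non-strict inequalities in the counting arguments line up correctly, and I would also verify the boundary cases $\alpha=0$ (where $m=n+1$, $q_1=\max_i X_i$, and both indicators vanish) and $\alpha=1$ (where $m=0$, $X_{(0)}=-\infty$, and both indicators equal one).
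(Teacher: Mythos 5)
Your proof is correct and follows essentially the same route as the paper's: reduce the $p$-value indicator via the self-term bookkeeping to the count $s=\#\{i:X_i<X_{n+1}\}$ (the paper's left-continuous $G(X_{n+1})$), identify $q_{1-\alpha}$ with the $\lceil(1-\alpha)(n+1)\rceil$-th order statistic, and conclude via the equivalence $s\ge m \iff X_{n+1}>X_{(m)}$. Your two-sided empirical-CDF argument for $q_{1-\alpha}=X_{(m)}$ and the explicit tie/boundary checks are just a more spelled-out version of the paper's chain of equalities.
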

\begin{proof}
   The second claim follows by noting that
\begin{equation}\label{eq62}
\begin{aligned}
q_{1-\alpha} &= \inf\bigg\{t \in \mathbb{R} : \frac{1}{n+1} \sum_{i=1}^{n+1} \mathbbm{1}(X_i \leq t) \geq 1-\alpha\bigg\}\\
&= \inf\bigg\{t \in \mathbb{R} : \frac{1}{n+1} \sum_{i=1}^{n+1} \mathbbm{1}(X_i < t) \geq 1-\alpha\bigg\}\\
&= \inf\bigg\{t \in \mathbb{R} : \sum_{i=1}^{n+1} \mathbbm{1}(X_i < t) \geq (1-\alpha)(n+1)\bigg\}\\
&= X_{(\lceil(1-\alpha)(n+1)\rceil)}.
\end{aligned}
\end{equation}
For the first claim, denote $G(x) = \sum_{i=1}^{n+1} \mathbbm{1}(X_i < x)$, which is a left-continuous step function. We then have
\begin{align*}
    \mathbbm{1}\bigg(\frac{1}{n+1} \bigg(\sum_{i=1}^n \mathbbm{1}(X_{n+1} \leq X_i) + 1\bigg) \leq \alpha\bigg)& = \mathbbm{1}(G(X_{n+1})\geq (1-\alpha)(n+1))\\
    & \leq \mathbbm{1}(X_{n+1} > q_{1-\alpha}),
\end{align*}
where the last step follows since $G$ is a left-continuous step function. 
Therefore, $X_{n+1}$ cannot be the $1-\alpha$ quantile and should be greater than $q_{1-\alpha}$.

Moreover, the event $X_{n+1} > q_{1-\alpha}$ implies that $G(X_{n+1}) \geq (1-\alpha)(n+1)$ by the definition of $q_{1-\alpha}$ as in expression \eqref{eq62}. Hence we conclude that
$$
\mathbbm{1}(G(X_{n+1}) \geq (1-\alpha)(n+1)) = \mathbbm{1}(X_{n+1} > q_{1-\alpha}),
$$
and the first claim follows.
\end{proof}

The following lemma is an alternative quantile expression. The proof can be found in \citet[Lemma~S.17]{Kim2026DPP}, and thus we omit it here. 
\begin{lemma}[Alternative Expression]
\label{lemma_repre2}
Given $\alpha \in (0, 1)$ and $n \geq 1$, set
$$
\alpha_\star = \max\bigg\{\bigg(\frac{n+1}{n} \alpha - \frac{1}{n}\bigg), 0\bigg\}.
$$
Then, for any dataset $\{X_1, \dots, X_{n+1}\}$, we have the identity
$$
\mathbbm{1}(X_{n+1} > q_{1-\alpha}) = \mathbbm{1}(X_{n+1} > r_{1-\alpha_\star}) \mathbbm{1}\bigg(\alpha \geq \frac{1}{n+1}\bigg),
$$
where $q_{1-\alpha}$ and $r_{1-\alpha_\star}$ are the $1-\alpha$ quantile of $\{X_i\}_{i=1}^{n+1}$ and the $1-\alpha_\star$ quantile of $\{X_i\}_{i=1}^n$, respectively.
\end{lemma}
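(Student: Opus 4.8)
The plan is to reduce both sides of the claimed identity to a single inequality on the integer count $R := \sum_{i=1}^n \mathbbm{1}(X_{n+1} \le X_i)$, i.e.\ the number of the first $n$ points that dominate $X_{n+1}$, and then to verify that the two sides impose exactly the same threshold on $R$. The key tool is the order-statistic characterization of quantiles from Lemma~\ref{lemma_repre1}, which I would invoke once on the pooled sample $\{X_1,\dots,X_{n+1}\}$ and once, in its $n$-point form, on the reduced sample $\{X_1,\dots,X_n\}$.

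First I would rewrite the left-hand side. The first identity in Lemma~\ref{lemma_repre1} gives $\mathbbm{1}(X_{n+1} > q_{1-\alpha}) = \mathbbm{1}\big( (R+1)/(n+1) \le \alpha \big)$. Since $R$ is a nonnegative integer, $(R+1)/(n+1)\le\alpha$ is equivalent to $R \le \lfloor (n+1)\alpha \rfloor - 1$. This event is satisfiable only when $\lfloor (n+1)\alpha\rfloor \ge 1$, that is when $\alpha \ge 1/(n+1)$; otherwise the threshold is negative and the indicator vanishes. This already accounts for the factor $\mathbbm{1}(\alpha \ge 1/(n+1))$, so it remains to treat the regime $\alpha \ge 1/(n+1)$.

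Next I would express the factor $\mathbbm{1}(X_{n+1} > r_{1-\alpha_\star})$ in terms of $R$. Writing $Y_{(1)}\le\cdots\le Y_{(n)}$ for the order statistics of the first $n$ points and $m := \lceil (1-\alpha_\star)n\rceil$, the order-statistic formula of Lemma~\ref{lemma_repre1} applied to the $n$-point sample gives $r_{1-\alpha_\star} = Y_{(m)}$. The event $X_{n+1} > Y_{(m)}$ is equivalent to ``at least $m$ of the first $n$ points lie strictly below $X_{n+1}$'', and since $\#\{i\le n: X_i < X_{n+1}\} = n - R$ holds exactly, this reads $n - R \ge m$, i.e.\ $R \le n - \lceil (1-\alpha_\star)n\rceil = \lfloor \alpha_\star n\rfloor$, using $n-\lceil x\rceil = \lfloor n-x\rfloor$ for integer $n$. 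Finally, for $\alpha \ge 1/(n+1)$ one has $\alpha_\star = (n+1)\alpha/n - 1/n$, so $\alpha_\star n = (n+1)\alpha - 1$ and hence $\lfloor \alpha_\star n\rfloor = \lfloor (n+1)\alpha\rfloor - 1$. This coincides with the threshold obtained for the left-hand side, and a case split on whether $\alpha \ge 1/(n+1)$ assembles the two reductions into the asserted equality.

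I expect the only delicate points to be the floor/ceiling bookkeeping and the treatment of ties between $X_{n+1}$ and the $X_i$. The latter is precisely why I would phrase the count as $\#\{i: X_i < X_{n+1}\} = n - R$ rather than appealing to a strict-order statement, since that identity holds verbatim regardless of coincidences among the sample values; the former reduces to the two elementary conversions $(R+1)/(n+1)\le\alpha \Leftrightarrow R \le \lfloor(n+1)\alpha\rfloor-1$ and $n-\lceil x\rceil=\lfloor n-x\rfloor$, both of which I would record explicitly to keep the argument airtight.
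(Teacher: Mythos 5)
Your proof is correct. A point of context first: the paper does not actually prove this lemma --- it states ``The proof can be found in \citet[Lemma~18]{Kim2023DPP}, and thus we omit it here'' --- so your argument supplies a self-contained proof of a step the paper outsources. Your route reduces both indicators to the same threshold condition on the count $R=\#\{i\le n: X_i\ge X_{n+1}\}$: the left side becomes $\mathbbm{1}\big(R\le\lfloor(n+1)\alpha\rfloor-1\big)$ via the first identity of Lemma~\ref{lemma_repre1}, the right side becomes $\mathbbm{1}\big(R\le\lfloor \alpha_\star n\rfloor\big)$ via the order-statistic formula, and the floor/ceiling bookkeeping ($n-\lceil x\rceil=\lfloor n-x\rfloor$, and $\alpha_\star n=(n+1)\alpha-1$ when $\alpha\ge 1/(n+1)$) shows the two thresholds coincide; the vanishing of both sides when $\alpha<1/(n+1)$ is handled correctly, and your tie-robust identity $\#\{i\le n: X_i<X_{n+1}\}=n-R$ is exactly the right way to avoid trouble at coincident values. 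An alternative (and the route taken in the cited source) is to observe that for $\alpha\ge 1/(n+1)$ one has $(1-\alpha_\star)n=(1-\alpha)(n+1)$ exactly, so $q_{1-\alpha}$ and $r_{1-\alpha_\star}$ are order statistics with the \emph{same} index $k=\lceil(1-\alpha)(n+1)\rceil$, computed with and without the point $X_{n+1}$; the lemma then follows from the elementary fact that $X_{n+1}>Z_{(k)}$ iff $X_{n+1}>Y_{(k)}$, where $Z_{(k)}$ and $Y_{(k)}$ denote the $k$th order statistics of the pooled and reduced samples. That version isolates the quantile-index identity as the single computation and pushes the ties discussion into one order-statistic comparison; your version trades that for explicit integer-threshold arithmetic. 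Both are sound, and yours has the merit of making the tie-handling completely mechanical.
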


The next result is concerned with the sensitivity of the quantiles and we borrow the proof of Lemma~S.18 in \cite{Kim2026DPP}. 

\begin{lemma}[Sensitivity of Quantiles]\label{lemma_sen_quan} Suppose that the test statistic $T$ has the sensitivity at most $\Delta_T$. Let us denote by $r_{1-\alpha}(\mcX_n; \{\bphi_i, \zeta_i\}_{i=1}^B)$ the $1-\alpha$ quantile of $\{M_i\}_{i=1}^B$ where $M_i = T(\mcX_n^{\bphi_i}) + 2\Delta_T \xi_{\epsilon, \delta}^{-1} \zeta_i$ with $\xi_{\epsilon, \delta} = \epsilon + \log(1/(1-\delta))$ and $\zeta_i \overset{\text{i.i.d.}}{\sim} \Lap(0, 1)$ for $i \in [B]$.
Then for any $\alpha \in [0, 1)$, the sensitivity of the $1-\alpha$ quantile satisfies
$$
\sup_{d_{\ham}(\mcX_n, \tilde{\mcX}_n) \leq 1} 
|r_{1-\alpha}(\mcX_n; \{\bphi_i, \zeta_i\}_{i=1}^B) - r_{1-\alpha}(\tilde{\mcX}_n; \{\bphi_i, \zeta_i\}_{i=1}^B)| \leq \Delta_T,
$$
for any resamples $\bphi_1, \dots, \bphi_B$ and any $\zeta_1, \dots, \zeta_B \overset{\text{i.i.d.}}{\sim} \Lap(0, 1)$.
\end{lemma}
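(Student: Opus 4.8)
The plan is to reduce the claim to two elementary, purely deterministic properties of the empirical quantile functional: monotonicity under pointwise domination and equivariance under a common additive shift. First I would fix neighbouring datasets $\mcX_n$ and $\tilde{\mcX}_n$ with $d_{\ham}(\mcX_n, \tilde{\mcX}_n) \leq 1$, together with an arbitrary but henceforth fixed collection of resamples $\bphi_1, \dots, \bphi_B$ and noises $\zeta_1, \dots, \zeta_B$, so that the whole problem becomes deterministic. Writing $M_i = T(\mcX_n^{\bphi_i}) + 2\Delta_T \xi_{\epsilon,\delta}^{-1}\zeta_i$ and $\tilde M_i = T(\tilde{\mcX}_n^{\bphi_i}) + 2\Delta_T \xi_{\epsilon,\delta}^{-1}\zeta_i$, the two noise terms coincide, so the definition of the global sensitivity (with the resampling viewed as the auxiliary variable, exactly as in Proposition~\ref{sen_dhsic}) gives $|M_i - \tilde M_i| = |T(\mcX_n^{\bphi_i}) - T(\tilde{\mcX}_n^{\bphi_i})| \leq \Delta_T$ for every $i \in [B]$. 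Hence the two samples are sandwiched as $M_i - \Delta_T \leq \tilde M_i \leq M_i + \Delta_T$ coordinatewise.

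Next I would record the two properties of the empirical $(1-\alpha)$-quantile $r_{1-\alpha}(\{a_i\}) = \inf\{t \in \mbR: B^{-1}\sum_{i=1}^B \mathbbm{1}(a_i \leq t) \geq 1-\alpha\}$, writing $F_a$ for the associated empirical CDF. For monotonicity, if $a_i \leq b_i$ for all $i$, then $b_i \leq t$ implies $a_i \leq t$, so $\mathbbm{1}(b_i \leq t) \leq \mathbbm{1}(a_i \leq t)$ and thus $F_b(t) \leq F_a(t)$ for every $t$; the super-level set $\{t: F_b(t) \geq 1-\alpha\}$ is therefore contained in $\{t: F_a(t) \geq 1-\alpha\}$, and taking infima yields $r_{1-\alpha}(\{a_i\}) \leq r_{1-\alpha}(\{b_i\})$. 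For shift equivariance, $F_{a+c}(t) = F_a(t-c)$, so the defining super-level set is translated by $c$ and hence $r_{1-\alpha}(\{a_i + c\}) = r_{1-\alpha}(\{a_i\}) + c$.

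Finally I would combine the pieces. Applying monotonicity to $\tilde M_i \leq M_i + \Delta_T$ and then shift equivariance gives $r_{1-\alpha}(\{\tilde M_i\}) \leq r_{1-\alpha}(\{M_i + \Delta_T\}) = r_{1-\alpha}(\{M_i\}) + \Delta_T$, while applying them to $M_i - \Delta_T \leq \tilde M_i$ gives the matching bound $r_{1-\alpha}(\{M_i\}) - \Delta_T \leq r_{1-\alpha}(\{\tilde M_i\})$. Together these sandwich $|r_{1-\alpha}(\mcX_n; \{\bphi_i,\zeta_i\}_{i=1}^B) - r_{1-\alpha}(\tilde{\mcX}_n; \{\bphi_i,\zeta_i\}_{i=1}^B)|$ by $\Delta_T$, and since the resamples and noises were arbitrary, the supremum over $d_{\ham}(\mcX_n,\tilde{\mcX}_n) \leq 1$ inherits the same bound. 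I do not anticipate a genuine obstacle here: the only point requiring care is verifying that the chosen convention for the quantile (the left-continuous infimum defining $r_{1-\alpha}$) is monotone and shift-equivariant with the inequalities oriented consistently, rather than invoking these facts loosely; everything else is bookkeeping.
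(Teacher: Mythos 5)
Your proof is correct and follows essentially the same route as the paper's: both reduce to the coordinatewise sandwich $M_i-\Delta_T \leq \tilde M_i \leq M_i+\Delta_T$ (valid because the noises $\zeta_i$ and resamples $\bphi_i$ are shared, and the sensitivity is defined with the resampling as auxiliary variable) and then transfer this to the empirical quantile. The only difference is packaging: you factor the transfer step through monotonicity and shift-equivariance of the quantile functional, which neatly avoids the $\epsilon$-limiting argument the paper uses for the lower-bound direction, but the substance of the argument is identical.
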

\begin{proof}
Let $\tilde{\mcX}_n$ be a neighboring dataset of $\mcX_n$ differing only in their $k$th component for some $k \in [n]$. Denote the resampled test statistics computed on $\tilde\mcX_n^{\bphi_1}, \dots, \tilde\mcX_n^{\bphi_B}$ by $\tilde T_1, \dots, \tilde T_B$. We write $T_i=T(\mcX_n^{\bphi_i})$, $r_{1-\alpha} = r_{1-\alpha}(\mcX_n; \{\bphi_i, \zeta_i\}_{i=1}^B)$ and $\tilde{r}_{1-\alpha} = r_{1-\alpha}(\tilde{\mcX}_n; \{\bphi_i, \zeta_i\}_{i=1}^B)$. Having this notation, first note that $T_i \geq \tilde{T}_i - \Delta_T$ for all $i \in [B]$ and thus
$$
1-\alpha \leq \frac{1}{B} \sum_{i=1}^B \mathbbm{1}(T_i + 2\Delta_T \xi_{\epsilon, \delta}^{-1} \zeta_i \leq r_{1-\alpha}) 
\leq \frac{1}{B} \sum_{i=1}^B \mathbbm{1}(\tilde{T}_i + 2\Delta_T \xi_{\epsilon, \delta}^{-1} \zeta_i \leq r_{1-\alpha} + \Delta_T),
$$
which implies that $\tilde{r}_{1-\alpha} \leq r_{1-\alpha} + \Delta_T$.

Next we argue that $\tilde{r}_{1-\alpha} \geq r_{1-\alpha} - \Delta_T$. For this direction, let $\epsilon > 0$ be an arbitrary constant. Then by the definition of $r_{1-\alpha}$ and $T_i \leq \tilde{T}_i + \Delta_T$ for all $i \in [B]$,
$$
1-\alpha \geq \frac{1}{B} \sum_{i=1}^B \mathbbm{1}(T_i + 2\Delta_T \xi_{\epsilon, \delta}^{-1} \zeta_i \leq r_{1-\alpha} - \epsilon) 
\geq \frac{1}{B} \sum_{i=1}^B \mathbbm{1}(\tilde{T}_i + 2\Delta_T \xi_{\epsilon, \delta}^{-1} \zeta_i \leq r_{1-\alpha} - \epsilon - \Delta_T).
$$
Hence $\tilde{r}_{1-\alpha} > r_{1-\alpha} - \epsilon - \Delta_T$. Since $\epsilon$ is arbitrary, we conclude $\tilde{r}_{1-\alpha} \geq r_{1-\alpha} - \Delta_T$. In summary, we have established that $|\tilde{r}_{1-\alpha} - r_{1-\alpha}| \leq \Delta_T$, which holds for any $k \in [n]$, any resamples $\bphi_1, \dots, \bphi_B$, and any $\zeta_1, \dots, \zeta_B \overset{\text{i.i.d.}}{\sim} \Lap(0, 1)$. Therefore, the desired claim follows.
\end{proof}

Pointwise consistency is often regarded as a weak power guarantee, while the uniform power is more desirable. We next state the uniform power of  the DP resampling test. It is a generalization of uniform power of the DP permutation tests \citep[Theorem~4]{Kim2026DPP}. For completeness, we formulate it as Lemma~\ref{lemma_uniformpower}.

\begin{lemma}
\label{lemma_uniformpower}
For $\alpha \in (0, 1)$, $\beta \in (0, 1 - \alpha)$ and $\xi_{\epsilon, \delta} > 0$, assume that $B \geq 6\alpha^{-1}\log(2\beta^{-1})$ and for any $P \in \mathcal{P}_1$,
\begin{align*}
    \mathbb{E}_P[T(X_n)] - \mathbb{E}_{P, \bphi}[T(X_n^{\bphi})] &\geq C_1 \sqrt{\frac{\Var_P[T(X_n)] + \Var_{P, \bphi}[T(X_n^{\bphi})]}{\alpha \beta}}\\
    &+ C_2 \frac{\Delta_T}{\xi_{\epsilon, \delta}} \max\bigg\{\log\bigg(\frac{1}{\alpha}\bigg), \log\bigg(\frac{1}{\beta}\bigg)\bigg\},
\end{align*}
where $C_1$ and $C_2$ are universal constants. Then the uniform power of the private resampling test is bounded below by $1 - \beta$ as
$$
\inf_{P \in \mathcal{P}_1} \mathbb{P}_P(\hat{p}_{\DP} \leq \alpha) \geq 1 - \beta.
$$
\end{lemma}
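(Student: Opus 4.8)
The plan is to follow the argument of \citet[Theorem~4]{Kim2023DPP}, replacing the permutation-specific steps by ones valid for an arbitrary resampling scheme $\bphi$; the only structural ingredients used are the quantile representation of the test and generic moment bounds, so no group or exchangeability property is needed (those are required for level, not power). Writing $T_0 = T(\mcX_n)$ and $T_{\bphi} = T(\mcX_n^{\bphi})$, and recalling $M_i = T_{\bphi_i} + 2\Delta_T \xi_{\epsilon,\delta}^{-1}\zeta_i$, Lemma~\ref{lemma_repre1} lets me rewrite the type~II error as $\mbP_P(\hat{p}_{\DP} > \alpha) = \mbP_P(M_0 \leq q_{1-\alpha,B})$, where $q_{1-\alpha,B}$ is the $1-\alpha$ quantile of $\{M_i\}_{i=0}^B$. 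The goal is to show this is at most $\beta$ under the stated separation hypothesis, uniformly over $\mcP_1$.

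First I would bound the data-dependent quantile. Since $B \geq 6\alpha^{-1}\log(2\beta^{-1})$, an empirical-quantile concentration bound (the analogue of \citet[Lemma~21]{Kim2023DPP}) gives $q_{1-\alpha,B} \leq q_{1-\alpha/6,\infty}$ with probability at least $1-\beta/2$, where $q_{1-\alpha/6,\infty}$ is the limiting ($B=\infty$) quantile of the conditional law of $M_{\bphi}$ given $\mcX_n$. Separating noise from statistic, $q_{1-\alpha/6,\infty} \leq q^a_{1-\alpha/12,\infty} + q^b_{1-\alpha/12,\infty}$, where $q^a$ is the $1-\alpha/12$ quantile of $T_{\bphi}$ given $\mcX_n$ and $q^b = 2\Delta_T \xi_{\epsilon,\delta}^{-1} F_\zeta^{-1}(1-\alpha/12)$. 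Chebyshev's inequality yields $q^a_{1-\alpha/12,\infty} \leq \mbE_{\bphi}[T_{\bphi}\mid \mcX_n] + \sqrt{12\,\Var_{\bphi}(T_{\bphi}\mid \mcX_n)/\alpha}$, while the Laplace tail gives $q^b \lesssim \Delta_T \xi_{\epsilon,\delta}^{-1}\log(1/\alpha)$. After replacing the random conditional mean and variance by their $P$-expectations through a further Markov/Chebyshev step (charged $O(\beta)$ of the budget), this produces a deterministic upper bound for $q_{1-\alpha,B}$ in terms of $\mbE_{P,\bphi}[T_{\bphi}]$, $\sqrt{\Var_{P,\bphi}[T_{\bphi}]/\alpha}$ and $\Delta_T\xi_{\epsilon,\delta}^{-1}\log(1/\alpha)$, valid with probability at least $1-c\beta$.

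Next I would bound $M_0$ from below: Chebyshev gives $T_0 \geq \mbE_P[T_0] - \sqrt{8\,\Var_P[T_0]/\beta}$ with probability at least $1-\beta/8$, and the Laplace lower tail gives $2\Delta_T\xi_{\epsilon,\delta}^{-1}\zeta_0 \gtrsim -\Delta_T\xi_{\epsilon,\delta}^{-1}\log(1/\beta)$ with probability at least $1-\beta/8$. Combining the lower bound on $M_0$ with the upper bound on $q_{1-\alpha,B}$ and summing the failure probabilities to at most $\beta$, the event $M_0 > q_{1-\alpha,B}$ holds with probability at least $1-\beta$ as soon as
\[
\mbE_P[T_0] - \mbE_{P,\bphi}[T_{\bphi}] \gtrsim \sqrt{\frac{\Var_P[T_0] + \Var_{P,\bphi}[T_{\bphi}]}{\alpha\beta}} + \frac{\Delta_T}{\xi_{\epsilon,\delta}}\max\Big\{\log\tfrac{1}{\alpha},\, \log\tfrac{1}{\beta}\Big\},
\]
using $\sqrt{\Var_{\bphi}/\alpha} + \sqrt{\Var_0/\beta} \lesssim \sqrt{(\Var_0+\Var_{\bphi})/(\alpha\beta)}$ since $\alpha,\beta<1$. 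This is precisely the hypothesis for suitable universal $C_1,C_2$, and taking the infimum over $P\in\mcP_1$ gives the claim.

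The main obstacle is the quantile step: the conditional moments $\mbE_{\bphi}[T_{\bphi}\mid\mcX_n]$ and $\Var_{\bphi}(T_{\bphi}\mid\mcX_n)$ are random in $\mcX_n$ and must be controlled by their unconditional counterparts without any group or exchangeability structure, so the passage from a data-dependent quantile to a deterministic bound — together with the careful apportioning of the total budget $\beta$ across the several high-probability events and the verification that $B \geq 6\alpha^{-1}\log(2\beta^{-1})$ suffices for the empirical-quantile concentration — is where the bookkeeping is most delicate. The reliance on Chebyshev rather than a sharper concentration inequality is exactly what forces the $1/\sqrt{\alpha\beta}$ polynomial dependence flagged in the remarks, which the dHSIC-specific results of Theorem~\ref{th_mini_sepa_dpdhsic} later improve to logarithmic factors.
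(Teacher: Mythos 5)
Your proposal is correct and takes essentially the same approach as the paper's own proof, which likewise reduces the claim to the quantile representation of Lemma~\ref{lemma_repre1} and the two-step scheme---bounding the resampling quantile via Chebyshev/Markov plus Laplace tail bounds, then controlling the type II error---adapted from \citet[Theorem~4]{Kim2023DPP}. The paper only outlines these steps and defers details to that reference, whereas you have filled in the bookkeeping, but the argument is the same.
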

\begin{proof}
    The proof structure is also similar to our proof of Theorem~\ref{th_mini_sepa_dpdhsic}. We outline the structure here, and details can be referred to the proof of \citet[Theorem~4]{Kim2026DPP}.

     The key idea is to replace the random resampling threshold with a deterministic one using concentration inequalities. By the quantile representation of resampling test (Lemma~\ref{lemma_repre1}), the type II error can be expressed as
    $$
        \mbE[1-\phi_{\dpdHSIC}]=\mbP(\hat{p}_{\DP}>\alpha)=\mbP(M_0\leq q_{1-\alpha,B}),
    $$
    where $q_{1-\alpha,B}$ is the $1-\alpha$ quantile of $M_0,M_1,\dots,M_B$ and $M_i=T(\mcX^{\bpi_i}_n)+2\Delta_T\xi^{-1}_{\epsilon,\delta}\zeta_i$. The proof is divided into two steps, where the quantile is bounded in the first step and the type II error is controlled in the second step.
\end{proof}

\subsection{Lemmas on dHSIC}\label{app_le_dhsic}
Next we demonstrate the reformulations of the squared population dHSIC and the squared V-statistic of dHSIC. 
\begin{lemma}[Reformulations of dHSIC]
\label{le_sq_dhsic}
    Suppose that $\bk$ is measurable and $\mbE\sqrt{\bk(x,x')}<\infty$, then the squared dHSIC in \eqref{df_dhsic} can be reformulated as 
    $$
    \dHSIC_{\bk}(P_{X^1,\dots, X^d})^2 = \mbE\bigg\{ \prod_{j=1}^d k^j(X_1^j, X_2^j) \bigg\} + \mbE \bigg\{ \prod_{j=1}^d k^j(X_{2j-1}^j, X_{2j}^j) \bigg\} - 2  \mbE \bigg\{ \prod_{j=1}^d k^j(X_1^j, X_{j+1}^j) \bigg\},
    $$
    and the squared V-statistic can be rewritten as
    $$
    \widehat{\dHSIC}^2(\mcX_n)= \frac{1}{n^2} \sum_{\bj_2^n} \prod_{j=1}^d
k^j(X_{i_1}^j, X_{i_2}^j)  
+ \frac{1}{n^{2d}} \sum_{\bj_{2d}^n} \prod_{j=1}^d
k^j(X_{i_{2j-1}}^j, X_{i_{2j}}^j) 
- \frac{2}{n^{d+1}} \sum_{\bj_{d+1}^n} \prod_{j=1}^d
k^j(X_{1}^j, X_{j+1}^j).
    $$
\end{lemma}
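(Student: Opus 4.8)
The plan is to recognize $\dHSIC_{\bk}$ as a maximum mean discrepancy between the joint law $P:=P_{X^1,\dots,X^d}$ and the product of marginals $Q:=\otimes_{j=1}^d P_{X^j}$, and then expand the squared RKHS norm of the difference of their mean embeddings. Writing $\Pi(P)=\int\bk(x,\cdot)\,P(\dd x)$ for the mean embedding (well-defined as a Bochner integral under the stated condition $\mbE\sqrt{\bk(x,x')}<\infty$), the reproducing property gives $\mbE_P[f]=\langle f,\Pi(P)\rangle_{\mcH_{\bk}}$ for every $f\in\mcH_{\bk}$. Hence the supremum in \eqref{df_dhsic} over the unit ball $\mcF_{\bk}$ is the supremum of the linear functional $f\mapsto\langle f,\Pi(P)-\Pi(Q)\rangle$, which equals $\|\Pi(P)-\Pi(Q)\|_{\mcH_{\bk}}$. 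Squaring yields $\dHSIC_{\bk}(P)^2=\langle\Pi(P),\Pi(P)\rangle-2\langle\Pi(P),\Pi(Q)\rangle+\langle\Pi(Q),\Pi(Q)\rangle$.

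Next I would evaluate the three inner products using $\langle\bk(x,\cdot),\bk(y,\cdot)\rangle=\bk(x,y)=\prod_{j=1}^d k^j(x^j,y^j)$. For $\langle\Pi(P),\Pi(P)\rangle$, taking two independent draws $X_1,X_2\sim P$ gives $\mbE\prod_{j=1}^d k^j(X_1^j,X_2^j)$, the first stated term. For $\langle\Pi(Q),\Pi(Q)\rangle$ I would use that $\Pi(Q)$ factorizes as the tensor product $\otimes_{j=1}^d\Pi_j(P_{X^j})$ of the marginal embeddings, since $\bk=\otimes_j k^j$ and $\mcH_{\bk}=\mcH^1\otimes\cdots\otimes\mcH^d$ is the projective tensor product; consequently the inner product factorizes into $\prod_{j=1}^d\mbE k^j(\cdot,\cdot)$ over independent marginal copies. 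Realizing these copies through the disjoint index pairs $(X_{2j-1}^j,X_{2j}^j)$ drawn from $2d$ independent samples yields $\mbE\prod_{j=1}^d k^j(X_{2j-1}^j,X_{2j}^j)$, the second term, where mutual independence across $j$ follows from disjointness of the sample indices. For the cross term, $X_1\sim P$ supplies the first slot while $d$ further independent samples $X_2,\dots,X_{d+1}$ supply the product-measure slot, giving $\mbE\prod_{j=1}^d k^j(X_1^j,X_{j+1}^j)$ and hence the factor $-2$.

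For the empirical identity I would repeat the same expansion with $P,Q$ replaced by the empirical joint measure $\hat P_n=n^{-1}\sum_i\delta_{(X_i^1,\dots,X_i^d)}$ and the product of empirical marginals $\hat Q_n=\otimes_j(n^{-1}\sum_i\delta_{X_i^j})$, noting that $\widehat{\dHSIC}(\mcX_n)$ in \eqref{dhsic_empirical} is exactly the MMD between $\hat P_n$ and $\hat Q_n$. The empirical embeddings are $\Pi(\hat P_n)=n^{-1}\sum_i\otimes_j\psi^j(X_i^j)$ with $\psi^j(x)=k^j(x,\cdot)$, and $\Pi(\hat Q_n)=\otimes_j(n^{-1}\sum_i\psi^j(X_i^j))$. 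Expanding $\|\Pi(\hat P_n)-\Pi(\hat Q_n)\|^2$ and pushing the inner products through the tensor factorization turns the three pieces into the $\bj_2^n$, $\bj_{2d}^n$ and $\bj_{d+1}^n$ sums respectively, matching the claimed closed form (the cross-term summand being $\prod_{j=1}^d k^j(X_{i_1}^j,X_{i_{j+1}}^j)$).

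The main obstacle is the tensor-product bookkeeping: justifying that the mean embedding of a product measure equals the tensor product of the marginal embeddings in the projective tensor product RKHS $\mcH_{\bk}$, and that inner products factor across the $d$ coordinates. Once this factorization is in hand, both identities reduce to rearranging independent (or disjointly-indexed) sums, which is routine; the integrability hypothesis $\mbE\sqrt{\bk(x,x')}<\infty$ is used only to guarantee the embeddings are well-defined and that the linear-functional representation $\mbE_P[f]=\langle f,\Pi(P)\rangle$ holds.
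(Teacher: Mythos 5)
Your proposal is correct and follows essentially the same route as the paper's proof: represent $\dHSIC_{\bk}$ as an MMD via mean embeddings, use the reproducing property to turn the supremum over the unit ball into the RKHS norm $\|\mu_p-\mu_q\|_{\mcH_{\bk}}$, and expand the squared norm into the three inner products, which become the three expectation (resp. sample-sum) terms. The only difference is one of detail: the paper stops after writing the inner products as expectations of $\bk$ over the appropriate product distributions and declares the sample version "similar," whereas you explicitly carry out the tensor factorization of the product-measure embedding and the empirical expansion — both of which are correct.
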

\begin{proof}
    According to the reproducing property and \citet[Lemma~3]{Gretton2012AKT}, there exist mean embeddings $\mu_p,\mu_q\in\mcH_{\bk}$ such that $\sup_{f\in\mcF_{\bk}}
    \mbE_{P_{X^1,\dots, X^d}}[f(X^1,\dots, X^d)]=\langle f, \mu_p\rangle_{\mcH_{\bk}}$ and $\mbE_{\otimes_{j=1}^d P_{X^j}}[f(X^1,\dots, X^d)]=\langle f, \mu_q\rangle_{\mcH_{\bk}}$ for all $f\in\mcF_{\bk}$. Then we have 
    \begin{align*}
    \dHSIC_{\bk}(P_{X^1,\dots, X^d})^2 &=\bigg[\sup_{f\in\mcF_{\bk}}\bigg\{
    \mbE_{P_{X^1,\dots, X^d}}[f(X^1,\dots, X^d)]-\mbE_{\otimes_{j=1}^d P_{X^j}}[f(X^1,\dots, X^d)]\bigg\}\bigg]^2\\
    &=\bigg\{\sup_{f\in\mcF_{\bk}}\langle f, \mu_p-\mu_q\rangle_{\mcH_{\bk}}
    \bigg\}^2=\|\mu_p-\mu_q\|_{\mcH_{\bk}}
    ^2\\
    &=\langle \mu_p,\mu_p\rangle_{\mcH_{\bk}}+ \langle \mu_q,\mu_q\rangle_{\mcH_{\bk}}- 2\langle \mu_p,\mu_q\rangle_{\mcH_{\bk}}\\
    &=\mbE_{P_{X^1,\dots, X^d}}\bk(X,X')+\mbE_{\otimes_{j=1}^d P_{X^j}}\bk(X,X')-2\mbE_{P_{X^1,\dots, X^d},\otimes_{j=1}^d P_{X^j}}\bk(X,X'),
    \end{align*}
where the subscripts of the expectation in last line indicate the distributions of $X$ and $X'$. The proof is thus completed. The sample version is similar to the population version, which is omitted here. 
\end{proof}

Next we derive the concentration inequality for permuted dHSIC, which is built in two steps. The first establishes the concentration inequality for a permuted U-statistic of dHSIC, which is given by 
\begin{align}\label{eq_02}
    U_{\bpi, \dHSIC} =& \frac{(n-2)!}{n!} \sum_{(i,j) \in \bi_2^n} \prod_{l=1}^d
k^l(X_{\pi_i^l}^l, X_{\pi_j^l}^l)  
+ \frac{(n-2d)!}{n!} \sum_{(i_1, j_1,\dots, i_d, j_d) \in \bi_{2d}^n} \prod_{l=1}^d
k^l(X_{\pi_{i_l}^l}^l, X_{\pi_{j_l}^l}^l) \nonumber\\
&- \frac{2(n-d-1)!}{n!} \sum_{(i, j_1, \dots, j_d) \in \bi_{d+1}^n} \prod_{l=1}^d
k^l(X_{\pi_{i}^l}^l, X_{\pi_{j_l}^l}^l).
\end{align}
The second step bounds the discrepancy between the V- and U-statistics. The concentration inequality for a permuted U-statistic of dHSIC is not a trivial extension of the $d=2$ case, which has been proved in \cite{Kim2022MinimaxOO}. 
We now formulate the first step as Lemma~\ref{le_u_dhsic_con_per} for its own theoretical significance.  
\begin{lemma}[Concentration Inequality for $U_{\bpi, \dHSIC}$] \label{le_u_dhsic_con_per}
Consider the permuted U-statistic \eqref{eq_02} and assume that the kernels $k^j$ are bounded as $0\leq k^j(x,x')\leq K^j$ for all $x,x'\in \mbX^j$ and $j\in[d]$. Let $K_0:=\prod_{j=1}^dK^j$,
then for all $t>0$ and some constant $C > 0$, we have
$$\mathbb{P}_{\bpi} ( U_{\bpi, \dHSIC} \geq t \mid \mathcal{X}_n ) 
\leq \exp \bigg\{ -C \min \bigg( \frac{n^2t^2}{K_0^2}, \frac{nt}{K_0} \bigg) \bigg\}. $$
\end{lemma}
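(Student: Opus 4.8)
The plan is to establish a Bernstein-type tail bound by exhibiting two quantities: a variance proxy of order $K_0^2/n^2$, which produces the Gaussian regime $\exp(-Cn^2t^2/K_0^2)$, and a sub-exponential scale of order $K_0/n$, which produces the heavier-tailed regime $\exp(-Cnt/K_0)$. This mirrors the permuted U-statistic concentration inequality of \citet[Theorem~6.2]{Kim2022MinimaxOO}, but its extension to $d\ge 3$ is genuinely non-trivial: $U_{\bpi,\dHSIC}$ is a permuted U-statistic of order $2d$ formed from the product kernel $\bk=\otimes_{l=1}^d k^l$ and driven by $d$ independent permutations $\pi^1,\dots,\pi^d$, whereas the $d=2$ case involves a single permutation.

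I would begin with two preliminary reductions. By invariance of the index summations under a common relabeling, $U_{\bpi,\dHSIC}$ depends only on the relative permutations, so one may fix $\pi^1=\mathrm{id}$ and treat the randomness as coming from the $d-1$ independent uniform permutations $\pi^2,\dots,\pi^d$; at $d=2$ this is exactly the single-permutation setting of \cite{Kim2022MinimaxOO}. A symmetry/degeneracy argument then shows $\mbE_{\bpi}[U_{\bpi,\dHSIC}\mid\mcX_n]=0$, since averaging over $\bpi$ reconstructs the unbiased estimator of the dHSIC of the product-of-marginals law, which vanishes; this centers the statistic so that we may work with $\mbP_{\bpi}(U_{\bpi,\dHSIC}\ge t\mid\mcX_n)$ directly.

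The Gaussian regime follows from a variance bound. Expanding $U_{\bpi,\dHSIC}^2$ into a double sum over index tuples, taking expectation over the relative permutations and bounding each kernel factor by $0\le k^l\le K^l$, the surviving ``diagonal'' overlaps are each at most $K_0$; keeping track of the combinatorial normalizations and of the three constituent terms of orders $2$, $d+1$ and $2d$ yields $\Var_{\bpi}(U_{\bpi,\dHSIC}\mid\mcX_n)\lesssim K_0^2/n^2$, the proxy that drives $\exp(-Cn^2t^2/K_0^2)$.

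The main obstacle is the sub-exponential regime, because the crude envelope $|U_{\bpi,\dHSIC}|\lesssim K_0$ alone would only furnish a scale $K_0$ rather than the sharper $K_0/n$ demanded by the statement. To recover the factor $n$, I would adapt the moment method of \cite{Kim2022MinimaxOO}: decompose the order-$2d$ permuted U-statistic into Hoeffding-type components of increasing degeneracy and bound the moments of each component using the boundedness $|\prod_l k^l|\le K_0$ together with the variance control above, so that the leading degenerate component inherits the scale $K_0/n$. The delicate point is that the product kernel spawns $O(2^{2d})$ such components and the underlying randomness now lives on a product of symmetric groups; handling the higher moments under $d-1$ independent permutations simultaneously---rather than iterating the single-permutation inequality, which would degrade the constants and entangle the components---calls for a tensorized permutation-concentration argument. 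Assembling the per-component sub-gamma estimates, summing over the finitely many components and optimizing the Bernstein trade-off then yields the stated bound $\exp\{-C\min(n^2t^2/K_0^2,\,nt/K_0)\}$.
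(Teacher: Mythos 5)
There is a genuine gap, and it sits exactly where the lemma is hard. Your plan correctly names the target shape (a Bernstein-type bound with variance proxy $K_0^2/n^2$ and sub-exponential scale $K_0/n$), but neither regime is actually established. For the Gaussian regime, the computation you describe (expand $U_{\bpi,\dHSIC}^2$, average over permutations, bound diagonal overlaps) yields only $\Var_{\bpi}(U_{\bpi,\dHSIC}\mid\mcX_n)\lesssim K_0^2/n^2$, and a variance bound gives Chebyshev, not $\exp(-Cn^2t^2/K_0^2)$; to get an exponential tail you must control the conditional moment generating function (or all moments), which is precisely the content you have not supplied. For the sub-exponential regime you explicitly defer the crux to an unspecified ``tensorized permutation-concentration argument'' adapting the moment method of \cite{Kim2022MinimaxOO} to $d-1$ independent permutations and $O(2^{2d})$ Hoeffding components. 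That deferred step \emph{is} the lemma: without a concrete mechanism for bounding the higher moments (or MGF) of a degenerate order-$2d$ statistic under a product of symmetric groups, the proof does not go through, and iterating the single-permutation result — which you rightly rule out — is the only route your sketch makes available.

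For contrast, the paper resolves both issues with one concrete device. It first rewrites $U_{\bpi,\dHSIC}$ as a finite sum over pairs of proper subsets $S_1,S_2\subsetneqq[d]$ of terms $g_{\bpi}(S_1,S_2)$ (an expansion of inner products of differences of feature maps, in which the non-degenerate ``full'' terms cancel). It then splits the sample into $d+1$ disjoint blocks of size $\lfloor n/(d+1)\rfloor$ and uses the block-swap symmetry of the split statistic to inject $d$ independent groups of Rademacher variables; conditionally, each $(S_1,S_2)$ term becomes a Rademacher chaos of order at most two whose coefficient matrix has Frobenius norm $\lesssim nK_0$. The Hanson--Wright argument then gives the conditional MGF bound $\mbE[\exp(\lambda\tilde U^{\varepsilon}_{\bpi,S_1,S_2})\mid\mcX_n]\leq\exp(C\lambda^2K_0^2/n^2)$ \emph{on the restricted range} $0\leq\lambda\leq c/K_0$; Jensen (to undo the splitting expectation) and H\"{o}lder (to combine the finitely many subset pairs) preserve this bound, and optimizing the Chernoff bound $e^{-\lambda t+C\lambda^2K_0^2/n^2}$ over the restricted range is what simultaneously produces both the $n^2t^2/K_0^2$ and the $nt/K_0$ regimes. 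In short, the restricted-range MGF bound replaces both your variance step and your missing moment argument; if you want to salvage your outline, this splitting-plus-symmetrization reduction to a Rademacher chaos is the missing ingredient you would need to make the ``tensorized'' step precise.
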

\begin{proof}

    The proof is completed in three steps after a reformulation. We introduce $d$ groups of Rademacher random variables in Step 1, which are used to bound individual terms in Step 2. In Step 3, we combine the inequalities derived in Step 2 to obtain the desired result. 
    To start with, we reformulate the U-statistic as 
    \begingroup\small
    \begin{align}\label{eq_25}
        &U_{\bpi, \dHSIC}\nonumber\\
        & = \frac{(n-2d-2)!}{n!}\sum_{\bi_{2d+2}^n}\bigg\langle \prod_{j=1}^dk^j(X^j_{\pi^j_{i_{1}}},\cdot)-\prod_{j=1}^dk^j(X^j_{\pi^j_{i_{2j+1}}},\cdot), \prod_{j=1}^dk^j(X^j_{\pi^j_{i_{2}}},\cdot)-\prod_{j=1}^dk^j(X^j_{\pi^j_{i_{2j+2}}},\cdot) \bigg\rangle_{\mcH}\nonumber\\
        &= \frac{(n-2d-2)!}{n!}\sum_{\bi_{2d+2}^n}\bigg\langle\sum_{S_1\subseteq[d]} \prod_{j\in S^c_1}\bigg\{k^j(X^j_{\pi^j_{i_{1}}},\cdot)-k^j(X^j_{\pi^j_{i_{2j+1}}},\cdot)\bigg\}\prod_{j\in S_1}k^j(X^j_{\pi^j_{i_{2j+1}}},\cdot)-\prod_{j=1}^dk^j(X^j_{\pi^j_{i_{2j+1}}},\cdot), \nonumber\\
        &\quad\sum_{S_2\subseteq[d]} \prod_{j\in S^c_2}\bigg\{k^j(X^j_{\pi^j_{i_{2}}},\cdot)-k^j(X^j_{\pi^j_{i_{2j+2}}},\cdot)\bigg\}\prod_{j\in S_2}^dk^j(X^j_{\pi^j_{i_{2j+2}}},\cdot)-\prod_{j=1}^dk^j(X^j_{\pi^j_{i_{2j+2}}},\cdot) \bigg\rangle_{\mcH}\nonumber\\
        &= \frac{(n-2d-2)!}{n!}\sum_{\bi_{2d+2}^n}\bigg\langle\sum_{S_1\subsetneqq[d]} \prod_{j\in S_1^c}\bigg\{k^j(X^j_{\pi^j_{i_{1}}},\cdot)-k^j(X^j_{\pi^j_{i_{2j+1}}},\cdot)\bigg\}
        \prod_{j\in S_1}k^j(X^j_{\pi^j_{i_{2j+1}}},\cdot)
        , \nonumber\\
        &\quad\sum_{S_2\subsetneqq[d]} \prod_{j\in S_2^c}\bigg\{k^j(X^j_{\pi^j_{i_{2}}},\cdot)-k^j(X^j_{\pi^j_{i_{2j+2}}},\cdot)\bigg\}
        \prod_{j\in S_2}k^j(X^j_{\pi^j_{i_{2j+2}}},\cdot) \bigg\rangle_{\mcH}\nonumber\\
        &=\frac{(n-2d-2)!}{n!}\sum_{S_1\subsetneqq[d]}\sum_{S_2\subsetneqq[d]}\sum_{\bi_{2d+2}^n}\prod_{j\in S_1\cap S_2}k^j(X^j_{\pi^j_{i_{2j+1}}},X^j_{\pi^j_{i_{2j+2}}})\cdot\nonumber\\
        &\prod_{j\in S_1^c\cap S_2}\bigg\{k^j(X^j_{\pi^j_{i_{1}}},X^j_{\pi^j_{i_{2j+2}}})-k^j(X^j_{\pi^j_{i_{2j+1}}},X^j_{\pi^j_{i_{2j+2}}})
        \bigg\}\prod_{j\in S_1\cap S_2^c}\bigg\{k^j(X^j_{\pi^j_{i_{2j+1}}},X^j_{\pi^j_{i_{2}}})-k^j(X^j_{\pi^j_{i_{2j+1}}},X^j_{\pi^j_{i_{2j+2}}})\bigg\}\cdot\nonumber\\
        &\prod_{j\in S_1^c\cap S_2^c}\bigg\{k^j(X^j_{\pi^j_{i_{1}}},X^j_{\pi^j_{i_{2}}})+k^j(X^j_{\pi^j_{i_{2j+1}}},X^j_{\pi^j_{i_{2j+2}}})-k^j(X^j_{\pi^j_{i_{1}}},X^j_{\pi^j_{i_{2j+2}}})-k^j(X^j_{\pi^j_{i_{2j+1}}},X^j_{\pi^j_{i_{2}}})
        \bigg\}\nonumber\\
        &=:\frac{(n-2d-2)!}{n!}\sum_{S_1\subsetneqq[d]}\sum_{S_2\subsetneqq[d]}\sum_{\bi_{2d+2}^n}g_{\bpi}(S_1,S_2;\bX_{\bpi_{i_1}},\dots, \bX_{\bpi_{i_{2d+2}}}),
    \end{align}
    \endgroup
    where $\bX_{\bpi_{i_{j}}}=(X^1_{\pi^1_{i_j}},\dots, X^d_{\pi^d_{i_j}})$ for $j\in[2d+2]$. For brevity and when there is no ambiguity, we denote the notation $g_{\bpi}(S_1,S_2;\bX_{\bpi_{i_1}},\dots, \bX_{\bpi_{i_{2d+2}}})$ by $ g_{\bpi}(S_1,S_2)$ without further explanation. 
    
    \textbf{Step 1 (Introducing Rademacher r.v.s)} We introduce $d$ groups of Rademacher random variables, and each group has $\lfloor n/(d+1)\rfloor$ independent Rademacher random variables. 
    To elaborate clearly, we focus on the first group, which is introduced by discussing the element ``$1$" belongs to which set.
    Specifically, we categorize the proper subsets $(S_1,S_2)$ of $[d]$ into three distinct classes: the first class denoted by superscript ``$a$" satisfies $1\in S_1\cap S_2$ , and the second class denoted with superscript ``$b$"  satisfies $1 \in S_1^c\cap S_2^c$. The superscripts ``$c_1$" and ``$c_2$" denote subsets such that $1 \in S_1^c\cap S_2$ and $1 \in S_1\cap S_2^c$ , respectively. 
    
    According to the classification on $(S_1,S_2)$, we divide $U_{\bpi, \dHSIC}$ into corresponding parts as
    $$
    U_{\bpi, \dHSIC}:={U}_{\bpi, \dHSIC}^{a}+{U}_{\bpi, \dHSIC}^{b}+{U}_{\bpi, \dHSIC}^{c_1}+{U}_{\bpi, \dHSIC}^{c_2}.
    $$
    Formally speaking, ${U}_{\bpi, \dHSIC}^{a}$ is defined as 
    \begingroup\small
    \begin{align*}
        &{U}_{\bpi, \dHSIC}^{a}:=\frac{(n-2d-2)!}{n!}\sum_{\bi_{2d+2}^n}\sum_{\substack{S_1,S_2\subsetneqq[d]\\1\in S_1\cap S_2}}\prod_{j\in S_1\cap S_2}k^j(X^j_{\pi^j_{i_{2j+1}}},X^j_{\pi^j_{i_{2j+2}}})\cdot\\
        &\prod_{j\in S_1^c\cap S_2}\bigg\{k^j(X^j_{\pi^j_{i_{1}}},X^j_{\pi^j_{i_{2j+2}}})-k^j(X^j_{\pi^j_{i_{2j+1}}},X^j_{\pi^j_{i_{2j+2}}})
        \bigg\}\prod_{j\in S_1\cap S_2^c}\bigg\{k^j(X^j_{\pi^j_{i_{2j+1}}},X^j_{\pi^j_{i_{2}}})-k^j(X^j_{\pi^j_{i_{2j+1}}},X^j_{\pi^j_{i_{2j+2}}})\bigg\}\cdot
        \\
        &\prod_{j\in S_1^c\cap S_2^c}\bigg\{k^j(X^j_{\pi^j_{i_{1}}},X^j_{\pi^j_{i_{2}}})+k^j(X^j_{\pi^j_{i_{2j+1}}},X^j_{\pi^j_{i_{2j+2}}})-k^j(X^j_{\pi^j_{i_{1}}},X^j_{\pi^j_{i_{2j+2}}})-k^j(X^j_{\pi^j_{i_{2j+1}}},X^j_{\pi^j_{i_{2}}})
        \bigg\}\\
        &:= \frac{(n-2d-2)!}{n!}\sum_{\bi_{2d+2}^n}h_a(\bX_{\bpi_{i_1}},\dots, \bX_{\bpi_{i_{2d+2}}}),
    \end{align*}
    \endgroup
    where $\bX_{\bpi_{i_{j}}}=(X^1_{\pi^1_{i_j}},\dots, X^d_{\pi^d_{i_j}})$ for $j\in[2d+2]$. 
    ${U}_{\bpi, \dHSIC}^{b}$,  ${U}_{\bpi, \dHSIC}^{c_i}$, $h_b(\cdot)$ and $h_{c_i}(\cdot)$ for $i=1,2$ are defined similarly. 
    Let $M^j:=\{m_1^j,\dots,m^j_{\lfloor n/(d+1)\rfloor}\}$ be a $\lfloor n/(d+1)\rfloor$-tuple uniformly sampled without replacement from $[n]$, $j=1,\dots,d+1$. $M^j$ are split into disjoint subsets of $[n]$. We define another statistic $\tilde{U}_{\bpi, \dHSIC}^{a}$ as 
    \begin{align*}
        \tilde{U}_{\bpi, \dHSIC}^{a} 
        := \frac{(\lfloor n/(d+1)\rfloor-2)!}{\lfloor n/(d+1)\rfloor!} \sum_{ \bi_2^{\lfloor n/(d+1)\rfloor}} h_a(\bX_{\bpi_{m^{1}_{i_1}}},\bX_{\bpi_{m^{1}_{i_2}}},\bX_{\bpi_{m^{2}_{i_1}}},\bX_{\bpi_{m^{2}_{i_2}}},\dots,\bX_{\bpi_{m^{d+1}_{i_{1}}}}, \bX_{\bpi_{m^{d+1}_{i_{2}}}}).
    \end{align*}
    $\tilde{U}_{\bpi, \dHSIC}^{b}$ and $\tilde{U}_{\bpi, \dHSIC}^{c_i}$ are defined similarly. Taking expectation of $\tilde{U}_{\bpi, \dHSIC}^{a}$, $\tilde{U}_{\bpi, \dHSIC}^{b}$ and $\tilde{U}_{\bpi, \dHSIC}^{c_i}$ over $M^j$ yields that
    \begin{equation}\label{eq_03}
    \begin{aligned}
        U^a_{\bpi, \dHSIC}&=\mbE_{M}[\tilde{U}_{\bpi, \dHSIC}^{a}\mid\mcX_n,\bpi],\\
        U^b_{\bpi, \dHSIC}&=\mbE_{M}[\tilde{U}_{\bpi, \dHSIC}^{b}\mid\mcX_n,\bpi],\\
        U^{c_i}_{\bpi, \dHSIC}&=\mbE_{M}[\tilde{U}_{\bpi, \dHSIC}^{c_i}\mid\mcX_n,\bpi],
    \end{aligned}
    \end{equation}
    where $M$ denotes the summary of $M^1,\dots,M^{d+1}$. 

    Note that the distributions of $\tilde{U}_{\bpi, \dHSIC}^{a}$, $\tilde{U}_{\bpi, \dHSIC}^{b}$ and $\tilde{U}_{\bpi, \dHSIC}^{c_i}$ do not change even if we randomly switch the order between $X^1_{\pi^1_{m^{1}_{k}}}$ and $X^1_{\pi^1_{m^{2}_{k}}}$ for $k=1,\dots,\lfloor n/(d+1)\rfloor$. On the other hand, $h_b(\cdot)$ changes the sign if we switch the order between $X^1_{\pi^1_{m^{1}_{k}}}$ and $X^1_{\pi^1_{m^{2}_{k}}}$ once, while $h_a(\cdot)$ does not change. Therefore, we introduce i.i.d.\ Rademacher random variables $\varepsilon_1^1,\dots,\varepsilon_{\lfloor n/(d+1)\rfloor}^1$ to represent the sign changes. 
    Therefore, it follows that 
    \begin{align*}
        &\tilde{U}_{\bpi, \dHSIC}^{a} + \tilde{U}_{\bpi, \dHSIC}^{b}+ \tilde{U}_{\bpi, \dHSIC}^{c_1}+ \tilde{U}_{\bpi, \dHSIC}^{c_2}\\
        \overset{d}{=}\ & \tilde{U}_{\bpi, \dHSIC}^{a,\varepsilon} + \tilde{U}_{\bpi, \dHSIC}^{b,\varepsilon} + \tilde{U}_{\bpi, \dHSIC}^{c_1,\varepsilon} + \tilde{U}_{\bpi, \dHSIC}^{c_2,\varepsilon}\\
        :=\ & \tilde{U}_{\bpi, \dHSIC}^{a}+
        \frac{(\lfloor n/(d+1)\rfloor-2)!}{\lfloor n/(d+1)\rfloor!} \sum_{ \bi_2^{\lfloor n/(d+1)\rfloor}} \varepsilon_{i_1}^1\varepsilon_{i_2}^1h_b(\bX_{\bpi_{m^{1}_{i_1}}},\bX_{\bpi_{m^{1}_{i_2}}},\dots,\bX_{\bpi_{m^{d+1}_{i_{1}}}}, \bX_{\bpi_{m^{d+1}_{i_{2}}}})\\
        &+\frac{(\lfloor n/(d+1)\rfloor-2)!}{\lfloor n/(d+1)\rfloor!} \sum_{ \bi_2^{\lfloor n/(d+1)\rfloor}} \varepsilon_{i_1}^1h_{c_1}(\bX_{\bpi_{m^{1}_{i_1}}},\bX_{\bpi_{m^{1}_{i_2}}},\dots,\bX_{\bpi_{m^{d+1}_{i_{1}}}}, \bX_{\bpi_{m^{d+1}_{i_{2}}}})\\
        &+\frac{(\lfloor n/(d+1)\rfloor-2)!}{\lfloor n/(d+1)\rfloor!}\sum_{ \bi_2^{\lfloor n/(d+1)\rfloor}} \varepsilon_{i_2}^1h_{c_2}(\bX_{\bpi_{m^{1}_{i_1}}},\bX_{\bpi_{m^{1}_{i_2}}},\dots,\bX_{\bpi_{m^{d+1}_{i_{1}}}}, \bX_{\bpi_{m^{d+1}_{i_{2}}}}),
    \end{align*}
    where the notation $\overset{d}{=}$ represents equality in distribution. Now we find a new statistic represented by quadratic and linear forms of Rademacher random variables. This procedure can proceed by 
    switching the order between $X^j_{\pi^1_{m^{1}_{k}}}$ and $X^j_{\pi^1_{m^{2}_{k}}}$ for $k=1,\dots,\lfloor n/(d+1)\rfloor$ and $j\in[d]$, with corresponding Rademacher random variables $\varepsilon_1^j,\dots,\varepsilon_{\lfloor n/(d+1)\rfloor}^j$, denoted as $\bve^j$. Similar to the discussion above, the original statistic is equal in distribution to the sum of the U-statistics, where the kernel functions are the product of the original kernels and some Rademacher random variables. 

    \textbf{Step 2 (Bounding each term)} We next bound the $\sum_{\bi_{2d+2}^n}g_{\bpi}(S_1,S_2)$ in \eqref{eq_25} for a given $(S_1,S_2)$. We discuss two cases of the relationship between the $(S_1,S_2)$. 
    
    \textbf{Case 1.} When $S_1^c\cap S_2^c\neq\varnothing$, we focus on the minimum element in $S_1^c\cap S_2^c$. Without loss of generality, we suppose that $1\in S_1^c\cap S_2^c$. Recalling the expression in \eqref{eq_25}, we consider the kernel of the U-statistic using splitting data as
    $$
    g_{\bpi,M}(S_1,S_2;i_1,i_2):=g_{\bpi}(S_1,S_2;\bX_{\bpi_{m^{1}_{i_1}}},\bX_{\bpi_{m^{1}_{i_2}}},\bX_{\bpi_{m^{2}_{i_1}}},\bX_{\bpi_{m^{2}_{i_2}}},\dots,\bX_{\bpi_{m^{d+1}_{i_{1}}}}, \bX_{\bpi_{m^{d+1}_{i_{2}}}}),
    $$
    and define the kernel with Rademacher r.v.s as 
    \begin{align*}
         &\prod_{j\in S_1^c\cap S_2}\varepsilon_{i_1}^j\prod_{j\in S_1\cap S_2^c}\varepsilon_{i_2}^j\prod_{j\in S_1^c\cap S_2^c}\varepsilon_{i_1}^j\varepsilon_{i_2}^jg_{\bpi,M}(S_1,S_2;i_1,i_2)\\
         &= \varepsilon_{i_1}^1\varepsilon_{i_2}^1\prod_{j\in S_1^c\cap S_2}\varepsilon_{i_1}^j\prod_{j\in S_1\cap S_2^c}\varepsilon_{i_2}^j\prod_{j\in S_1^c\cap S_2^c\backslash\{1\}}\varepsilon_{i_1}^j\varepsilon_{i_2}^jg_{\bpi,M}(S_1,S_2;i_1,i_2) \\
         &=:\varepsilon_{i_1}^1\varepsilon_{i_2}^1g_{\bpi,M}^{a,\bve}(S_1,S_2;i_1,i_2;\bve^2,\dots,\bve^d).
    \end{align*}
    $g_{\bpi,M}(S_1,S_2;i_1,i_2)$ is the split version of $g_{\bpi}(S_1,S_2)$ with the splitting $M^1,\dots,M^{d+1}$, and it is similar to the form of $h_b(\bX_{\bpi_{m^{1}_{i_1}}},\bX_{\bpi_{m^{1}_{i_2}}},\dots,\bX_{\bpi_{m^{d+1}_{i_{1}}}})$. $g_{\bpi,M}^{a,\bve}(S_1,S_2;i_1,i_2;\bve^2,\dots,\bve^d)$ introduces corresponding Rademacher r.v.s, and may potentially depend on $\bve^2,\dots,\bve^d$ by product. 
    By the discussions in Step 1, it holds that
    \begin{equation}\label{eq_26}
        \begin{aligned}
            &\frac{(n-2d-2)!}{n!}\sum_{\bi_{2d+2}^n}g_{\bpi}(S_1,S_2)\\
            =\ &\mbE_M\bigg(\frac{(\lfloor n/(d+1)\rfloor-2)!}{\lfloor n/(d+1)\rfloor!} \sum_{ \bi_2^{\lfloor n/(d+1)\rfloor}} g_{\bpi,M}(S_1,S_2;i_1,i_2)\mid \mcX_n,\bpi\bigg)\\
            \overset{d}{=}\ &\mbE_M\bigg(\frac{(\lfloor n/(d+1)\rfloor-2)!}{\lfloor n/(d+1)\rfloor!} \sum_{ \bi_2^{\lfloor n/(d+1)\rfloor}} \varepsilon_{i_1}^1\varepsilon_{i_2}^1g_{\bpi,M}^{a,\bve}(S_1,S_2;i_1,i_2;\bve^2,\dots,\bve^d)\mid \mcX_n,\bpi\bigg).
        \end{aligned} 
        \end{equation} 
    We denote $a_{i_1,i_2}(S_1,S_2;\bpi,M)=g_{\bpi,M}^{a,\bve}(S_1,S_2;i_1,i_2;\bve^2,\dots,\bve^d)$ for $1\leq i_1\neq i_2\leq \lfloor n/(d+1)\rfloor$, and $a_{i_1,i_2}(S_1,S_2;\bpi,M)=0$ for $1\leq i_1= i_2\leq \lfloor n/(d+1)\rfloor$. Use $\bA_{\bpi,M}(S_1,S_2;\bve^2,\dots,\bve^d)$ to denote a $\lfloor n/(d+1)\rfloor\times\lfloor n/(d+1)\rfloor$ matrix whose elements are $a_{i_1,i_2}(S_1,S_2;\bpi,M)$.
    As the Rademacher r.v.s are bounded between $-1$ and $1$, we have 
    $$
    \|\bA_{\bpi,M}(S_1,S_2;\bve^2,\dots,\bve^d)\|\leq\|\bA_{\bpi,M}(S_1,S_2;\bve^2,\dots,\bve^d)\|_F\lesssim nK_0,
    $$
    where $\|\cdot\|_F$ and $\|\cdot\|$ denote the Frobenius norm and the operator norm, respectively. 
    We denote the quantity inside the conditional expectation in the last line of \eqref{eq_26} as $\tilde{U}_{\bpi,S_1,S_2}^{A,\varepsilon}$. 
    For a given $(S_1,S_2)$, \eqref{eq_26} is a quadratic form of $\bve^1$ conditional on $\bpi$, $M$ and $\bve^2,\dots,\bve^d$. The proof of Hanson-Wright inequality \citep{Rudelson2013HansonWrightIA} along with sub-Gaussian Rademacher r.v.s yields that 
    \begin{equation}\label{eq_04}
    \begin{aligned}
        &\mbE_{M,\bpi,\bve^1,\dots,\bve^d}\{\exp(\lambda\tilde{U}_{\bpi,S_1,S_2}^{A,\varepsilon})\mid \mcX_n\}\\
        =\ &\mbE_{M,\bpi,\bve^2,\dots,\bve^d}\{\mbE_{\bve^1}[\exp(\lambda\tilde{U}_{\bpi,S_1,S_2}^{A,\varepsilon})\mid \mcX_n,\bve^2,\dots,\bve^d]\}\\
        \leq\ &\mbE_{M,\bpi,\bve^2,\dots,\bve^d}\{\exp(C_1\lambda^2n^{-2}K_0^2)\mid \mcX_n,\bve^2,\dots,\bve^d\}\\
        =\ & \exp (C_1\lambda^2n^{-2}K_0^2),
    \end{aligned}
    \end{equation}
    for $0\leq \lambda\leq c_1/K_0$. 
    
    \textbf{Case 2.} When $S_1^c\cap S_2^c=\varnothing$, both $S_1^c\cap S_2$ and $S_1\cap S_2^c$ are non-empty. In fact, $S_1^c\cap S_2\neq\varnothing$ follows  $(S_1^c\cap S_2^c)\cup(S_1^c\cap S_2)= S_1^c\neq\varnothing$ as $S_1\subsetneqq[d]$ and  $S_1\cap S_2^c\neq\varnothing$ holds similarly. We next focus on the minimum elements in $S_1^c\cap S_2$ and $S_1\cap S_2^c$. 
    Without loss of generality, we suppose that $1\in S_1^c\cap S_2$ and $2\in S_1\cap S_2^c$. Thus the kernel with Rademacher r.v.s is
    \begin{align*}
         &\quad\prod_{j\in S_1^c\cap S_2}\varepsilon_{i_1}^j\prod_{j\in S_1\cap S_2^c}\varepsilon_{i_2}^j\prod_{j\in S_1^c\cap S_2^c}\varepsilon_{i_1}^j\varepsilon_{i_2}^jg_{\bpi,M}(S_1,S_2;i_1,i_2)\\
         &= \varepsilon_{i_1}^1\varepsilon_{i_2}^2\prod_{j\in S_1^c\cap S_2\backslash\{1\}}\varepsilon_{i_1}^j\prod_{j\in S_1\cap S_2^c\backslash\{2\}}\varepsilon_{i_2}^j\prod_{j\in S_1^c\cap S_2^c}\varepsilon_{i_1}^j\varepsilon_{i_2}^jg_{\bpi,M}(S_1,S_2;i_1,i_2) \\
         &=: \varepsilon_{i_1}^1\varepsilon_{i_2}^2g_{\bpi,M}^{b,\bve}(S_1,S_2;i_1,i_2;\bve^3,\dots,\bve^d),
    \end{align*}
    and then the following equation holds similar to \eqref{eq_26}. 
        \begin{equation}\label{eq_27}
        \begin{aligned}
            &\frac{(n-2d-2)!}{n!}\sum_{\bi_{2d+2}^n}g_{\bpi}(S_1,S_2)\\
            =\ &\mbE_M\bigg(\frac{(\lfloor n/(d+1)\rfloor-2)!}{\lfloor n/(d+1)\rfloor!} \sum_{ \bi_2^{\lfloor n/(d+1)\rfloor}} g_{\bpi,M}(S_1,S_2;i_1,i_2)\mid \mcX_n,\bpi\bigg)\\
            \overset{d}{=}\ &\mbE_M\bigg(\frac{(\lfloor n/(d+1)\rfloor-2)!}{\lfloor n/(d+1)\rfloor!} \sum_{ \bi_2^{\lfloor n/(d+1)\rfloor}} \varepsilon_{i_1}^1\varepsilon_{i_2}^2g_{\bpi,M}^{b,\bve}(S_1,S_2;i_1,i_2;\bve^3,\dots,\bve^d)\mid \mcX_n,\bpi\bigg).
        \end{aligned} 
        \end{equation}
    Define $\bB_{\bpi,M}(S_1,S_2;\bve^3,\dots,\bve^d)$ to be a $\lfloor n/(d+1)\rfloor\times\lfloor n/(d+1)\rfloor$ matrix like $\bA_{\bpi,M}(\cdot)$. Its elements are associated with the split $M^1,\dots,M^{d+1}$, and may potentially depend on other groups of Rademacher r.v.s $\bve^3,\dots,\bve^d$ by product. It follows that  
    $$
    \|\bB_{\bpi,M}(S_1,S_2;\bve^3,\dots,\bve^d)\|\leq\|\bB_{\bpi,M}(S_1,S_2;\bve^3,\dots,\bve^d)\|_F\lesssim nK_0.
    $$
    We denote the quantity inside the conditional expectation in the last line of \eqref{eq_27} by $\tilde{U}_{\bpi,S_1,S_2}^{B,\varepsilon}$. 
    For a given $(S_1,S_2)$, \eqref{eq_27} is a quadratic form of $\bve^1$ and $\bve^2$ conditional on $\bpi$, $M$ and $\bve^3,\dots,\bve^d$. Similar proof leads to 
    \begin{equation}\label{eq_28}
    \begin{aligned}
        &\mbE_{M,\bpi,\bve^1,\dots,\bve^d}\{\exp(\lambda\tilde{U}_{\bpi,S_1,S_2}^{B,\varepsilon})\mid \mcX_n\}\\
        =\ &\mbE_{M,\bpi,\bve^3,\dots,\bve^d}\{\mbE_{\bve^1,\bve^2}[\exp(\lambda\tilde{U}_{\bpi,S_1,S_2}^{B,\varepsilon})\mid \mcX_n,\bve^3,\dots,\bve^d]\}\\
        \leq\ &\mbE_{M,\bpi,\bve^3,\dots,\bve^d}\{\exp(C_2\lambda^2n^{-2}K_0^2)\mid \mcX_n,\bve^3,\dots,\bve^d\}\\
        =\ &\exp(C_2\lambda^2n^{-2}K_0^2)
    \end{aligned}
    \end{equation}
    for $0\leq \lambda\leq c_2/K_0$. As the bounds are the same for $\tilde{U}_{\bpi,S_1,S_2}^{A,\varepsilon}$ and $\tilde{U}_{\bpi,S_1,S_2}^{B,\varepsilon}$, we unify them as $\tilde{U}_{\bpi,S_1,S_2}^{\varepsilon}$, and which type it is depends on $S_1,S_2$ in the subscripts. 
    
    \textbf{Step 3 (Combining bounds)} By noting that $(S_1,S_2)$ pairs are finite as $d$ is fixed, we combine the inequalities together. 
    It holds that
    \begin{align*}
        &\mathbb{P}_{\bpi} ( U_{\bpi, \dHSIC} \geq t \mid \mathcal{X}_n) \\
        =\ &\mathbb{P}_{\bpi} \bigg( \frac{(n-2d-2)!}{n!}\sum_{S_1\subsetneqq[d]}\sum_{S_2\subsetneqq[d]}\sum_{\bi_{2d+2}^n}g_{\bpi}(S_1,S_2;i_1,i_2) \geq t \mid \mathcal{X}_n \bigg) \\
        \overset{(i)}{\leq}\ & e^{-\lambda t}\mbE_{\bpi}\bigg[\exp\bigg(\lambda\frac{(n-2d-2)!}{n!}\sum_{S_1\subsetneqq[d]}\sum_{S_2\subsetneqq[d]}\sum_{\bi_{2d+2}^n}g_{\bpi}(S_1,S_2;i_1,i_2)\bigg)\mid \mcX_n\bigg] \\
        \overset{(ii)}{\leq}\ & e^{-\lambda t}\mbE_{M,\bpi,\bve^1,\dots,\bve^d}\bigg[\exp\bigg(\lambda\sum_{S_1\subsetneqq[d]}\sum_{S_2\subsetneqq[d]}\tilde{U}_{\bpi,S_1,S_2}^{\varepsilon}\bigg)\mid \mcX_n\bigg]\\
        \overset{(iii)}{\leq}\ & e^{-\lambda t}\prod_{S_1\subsetneqq[d]}\prod_{S_2\subsetneqq[d]}\mbE_{M,\bpi,\bve^1,\dots,\bve^d}[\exp(\lambda C_d\tilde{U}_{\bpi,S_1,S_2}^{\varepsilon})\mid \mcX_n]^{1/C_d}\\
        \overset{(iv)}{\leq}\ & e^{-\lambda t}\prod_{S_1\subsetneqq[d]}\prod_{S_2\subsetneqq[d]}\exp(C_3\lambda^2n^{-2}K_0^2)^{1/C_d}\\
        =\ & \exp(-\lambda t+C_4\lambda^2n^{-2}K_0^2), 
    \end{align*}
    provided that $0\leq \lambda\leq c_0/K_0$, where Step~$(i)$ uses the Chernoff bound for any $\lambda\geq0$. Step~$(ii)$ follows from Jensen's inequality along with \eqref{eq_26},  \eqref{eq_27}, and noting the summation still equals in distribution after taking conditional expectation. 
    Step~$(iii)$ follows by H{\"o}lder's inequality with constant $C_d$ associated with $d$ such that $\sum_{S_1\subsetneqq[d]}\sum_{S_2\subsetneqq[d]}C_d=1$. 
    Step~$(iv)$ holds by \eqref{eq_04} and \eqref{eq_28}. Optimizing over $\lambda$, we have the desired 
    $$
    \mathbb{P}_{\bpi} ( U_{\bpi, \dHSIC} \geq t \mid \mathcal{X}_n) \leq \exp \bigg\{ -C \min \bigg( \frac{n^2t^2}{K_0^2}, \frac{nt}{K_0} \bigg) \bigg\}. 
    $$
\end{proof}

Based on Lemma~\ref{le_u_dhsic_con_per}, we formally state the concentration inequality for permuted dHSIC and prove it by bounding the discrepancy between the V- and U-statistics.

\begin{lemma}[Concentration Inequality for Permuted dHSIC]\label{le_dhsic_con_per}
Assume that the kernels $k^j$  are bounded as $0\leq k^j(x,x')\leq K^j$ for all $x,x'\in \mbX^j$ and $j\in[d]$. For any $\alpha \in (0,1)$,
    $$
\mathbb{P}_{\bpi} \bigg( \hat{\dHSIC}(\mathcal{X}_n^{\bpi}) \geq C \sqrt{\frac{\prod K^j}{n}} 
\max \bigg\{ \log^{1/4}\bigg(\frac{1}{\alpha}\bigg), \log^{1/2}\bigg(\frac{1}{\alpha}\bigg), 1 \bigg\} \mid \mcX_n \bigg) 
\leq \alpha,
$$
where $C$ is some positive constant.
\end{lemma}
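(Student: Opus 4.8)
The plan is to transfer the concentration bound already established for the permuted U-statistic in Lemma~\ref{le_u_dhsic_con_per} to the V-statistic, and then pass to the square root. Write $V_{\bpi,\dHSIC} := \hat{\dHSIC}^2(\mcX_n^{\bpi})$ and $K_0 := \prod_{j=1}^d K^j$. The first step is a purely deterministic comparison: I claim that $|V_{\bpi,\dHSIC} - U_{\bpi,\dHSIC}| \leq C' K_0/n$ holds for every permutation $\bpi$ and every dataset. This follows from a counting argument. Both statistics are assembled from the same three kernel blocks, but the V-statistic sums over the full Cartesian products $\bj_2^n,\bj_{d+1}^n,\bj_{2d}^n$, whereas the U-statistic restricts to distinct-index tuples $\bi_2^n,\bi_{d+1}^n,\bi_{2d}^n$ with the matching factorial normalizations. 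The discrepancy is exactly the contribution of the diagonal tuples in which at least two indices coincide; since each summand is bounded by $K_0$ and the set of such tuples is smaller by a factor $n$ than the leading set, this contribution is $O(K_0/n)$. The finer split into the permutation-invariant piece $D_1$ and remainder $D_2$ used in the proof of Theorem~\ref{th_L2separation} yields more, but here the crude bound suffices.

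Next I would invoke Lemma~\ref{le_u_dhsic_con_per} and invert its tail. Setting the bound equal to $\alpha$ requires $\min(n^2 t^2/K_0^2,\, nt/K_0) \geq C^{-1}\log(1/\alpha)$, which splits into a sub-Gaussian regime (small $t$, governed by $(nt/K_0)^2$) and a sub-exponential regime (large $t$, governed by $nt/K_0$). Choosing
\[
t_\alpha := \frac{K_0}{n}\,\max\bigg\{\sqrt{\tfrac{\log(1/\alpha)}{C}},\ \tfrac{\log(1/\alpha)}{C}\bigg\}
\]
therefore guarantees $\mbP_{\bpi}(U_{\bpi,\dHSIC} \geq t_\alpha \mid \mcX_n) \leq \alpha$. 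Combining this with the deterministic bound from the first step, on the complementary event of conditional probability at least $1-\alpha$ we obtain
\[
V_{\bpi,\dHSIC} \leq t_\alpha + \frac{C' K_0}{n} \leq C'' \frac{K_0}{n}\,\max\big\{\log(1/\alpha),\ \sqrt{\log(1/\alpha)},\ 1\big\},
\]
where the constant term $C'K_0/n$ is absorbed by the ``$1$'' inside the maximum.

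Finally, taking square roots and using $\sqrt{\max\{a,b,c\}} = \max\{\sqrt a,\sqrt b,\sqrt c\}$ for nonnegative reals, together with $\sqrt{\log(1/\alpha)}^{1/2} = \log^{1/4}(1/\alpha)$, gives
\[
\hat{\dHSIC}(\mcX_n^{\bpi}) = \sqrt{V_{\bpi,\dHSIC}} \leq C\sqrt{\frac{K_0}{n}}\,\max\big\{\log^{1/4}(1/\alpha),\ \log^{1/2}(1/\alpha),\ 1\big\}
\]
with conditional probability at least $1-\alpha$, which is exactly the claim since $K_0 = \prod_j K^j$. The one genuinely substantive step is the first: verifying that the V/U discrepancy is of order $K_0/n$ rather than some larger order, since that bias is what the square root would otherwise amplify. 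The remaining steps are a routine inversion of the U-statistic tail and an elementary square-root manipulation, and I expect no difficulty beyond carefully tracking the factorial normalizations when counting the diagonal tuples.
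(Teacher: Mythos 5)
Your proposal is correct and follows essentially the same route as the paper's proof: a deterministic $O(K_0/n)$ bound on the gap between the permuted V- and U-statistics, followed by an inversion of the tail bound in Lemma~\ref{le_u_dhsic_con_per} and an elementary square-root manipulation. The only difference is presentational (you fix $\alpha$ and exhibit the quantile $t_\alpha$, whereas the paper fixes $t$, bounds the probability, and then solves for $t$), which is mathematically equivalent.
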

\begin{proof}
    Under the assumptions of Lemma~\ref{le_u_dhsic_con_per}, for all $t>0$
\begin{equation}\label{eq_05}
   \mathbb{P}_{\bpi} ( U_{\bpi, \dHSIC} \geq t \mid \mathcal{X}_n ) 
\leq \exp \bigg\{ -C \min \bigg( \frac{n^2t^2}{K_0^2}, \frac{nt}{K_0} \bigg) \bigg\}. 
\end{equation}
On the other hand, Lemma~\ref{le_sq_dhsic} yields that the squared dHSIC can be written as
\begin{align*}
    \widehat{\dHSIC}^2(\mcX_n^{\bpi})=& \frac{1}{n^2} \sum_{(i,j) \in \bj_2^n} \prod_{l=1}^d
k^l(X_{\pi_i^l}^l, X_{\pi_j^l}^l)  
+ \frac{1}{n^{2d}} \sum_{(i_1, j_1,\dots, i_d, j_d) \in \bj_{2d}^n} \prod_{l=1}^d
k^l(X_{\pi_{i_l}^l}^l, X_{\pi_{j_l}^l}^l) \nonumber\\
&- \frac{2}{n^{d+1}} \sum_{(i, j_1, \dots, j_d) \in \bj_{d+1}^n} \prod_{l=1}^d
k^l(X_{\pi_{i}^l}^l, X_{\pi_{j_l}^l}^l).
\end{align*}
Then the difference between the U-statistic and the squared V-statistic can  be bounded as
\begin{align*}
    & |U_{\bpi, \dHSIC}-\widehat{\dHSIC}^2(\mcX_n^{\bpi})|\\
    \leq & \bigg|\frac{(n-2)!}{n!} \sum_{(i,j) \in \bi_2^n} \prod_{l=1}^d k^l(X_{\pi_i^l}^l, X_{\pi_j^l}^l)- \frac{1}{n^2} \sum_{(i,j) \in \bj_2^n} \prod_{l=1}^d k^l(X_{\pi_i^l}^l, X_{\pi_j^l}^l) 
    \bigg|\\
    + & \bigg|\frac{(n-2d)!}{n!} \sum_{(i_1, j_1,\dots, i_d, j_d) \in \bi_{2d}^n} \prod_{l=1}^d k^l(X_{\pi_{i_l}^l}^l, X_{\pi_{j_l}^l}^l)- \frac{1}{n^{2d}} \sum_{(i_1, j_1,\dots, i_d, j_d) \in \bj_{2d}^n} \prod_{l=1}^d k^l(X_{\pi_{i_l}^l}^l, X_{\pi_{j_l}^l}^l)
    \bigg|\\
    + & \bigg|\frac{2(n-d-1)!}{n!} \sum_{(i, j_1, \dots, j_d) \in \bi_{d+1}^n} \prod_{l=1}^d k^l(X_{\pi_{i}^l}^l, X_{\pi_{j_l}^l}^l)- \frac{2}{n^{d+1}} \sum_{(i, j_1, \dots, j_d) \in \bj_{d+1}^n} \prod_{l=1}^d k^l(X_{\pi_{i}^l}^l, X_{\pi_{j_l}^l}^l)
    \bigg|\\
    \leq & C_1 \frac{K_0}{n}.
\end{align*}
Using the upper bound, we link the concentration inequality for the V- and U-statistics as
\begin{align*} 
    \mathbb{P}_{\bpi} ( \hat{\dHSIC}(\mathcal{X}_n^{\bpi}) \geq t\mid \mcX_n ) &= \mathbb{P}_{\bpi} ( \hat{\dHSIC}^2(\mathcal{X}_n^{\bpi}) \geq t^2\mid \mcX_n )\\
    &\leq  \mathbb{P}_{\bpi} ( |\hat{\dHSIC}^2(\mathcal{X}_n^{\bpi})-U_{\bpi, \dHSIC}| + U_{\bpi, \dHSIC} \geq t^2\mid \mcX_n )\\
    &\leq  \mathbb{P}_{\bpi} \bigg( U_{\bpi, \dHSIC} \geq t^2-C_1 \frac{K_0}{n}\mid \mcX_n \bigg)\\
    &\leq  \exp \bigg\{ -C_2 \min \bigg( \frac{n^2(t^2-C_1{K_0}/{n})^2}{K_0^2}, \frac{n(t^2-C_1{K_0}/{n})}{K_0} \bigg) \bigg\}.
\end{align*}
The last inequality is based on \eqref{eq_05}. The proof is completed by setting the last exponential bound equal to $\alpha$ and solving for $t$. 
\end{proof}

\begin{lemma}[Exponential Inequality for the Empirical dHSIC]\label{le_ineq_e_dhsic}
Assume that the kernels $k^j$  are bounded as $0\leq k^j(x,x')\leq K^j$ for all $x,x'\in \mbX^j$ and $j\in[d]$. For any $t\geq0$,
    $$
    \mbP \bigg\{ 
    | \widehat{\dHSIC}(\mathcal{X}_n) - \dHSIC_{\bk}(P_{X_1,\dots,X_d}) | 
    \geq C_1 \sqrt{\frac{K_0}{n}} + t 
    \bigg\} 
    \leq 2 \exp \bigg( -\frac{C_2 t^2 n}{K_0} \bigg),
    $$
where $C_1, C_2$ are positive constants and $K_0:=\prod K^j$.
\end{lemma}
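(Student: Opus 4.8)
The plan is to split the deviation $|\widehat{\dHSIC}(\mcX_n)-\dHSIC_{\bk}(P)|$ into a fluctuation-around-the-mean part, handled by McDiarmid's bounded differences inequality, and a deterministic bias part $|\mbE\widehat{\dHSIC}(\mcX_n)-\dHSIC_{\bk}(P)|$, handled by a direct RKHS computation. First I would rewrite both quantities in the mean-embedding form supplied by Lemma~\ref{le_sq_dhsic}: writing $\psi^j(x)=k^j(x,\cdot)$, set $\hat\mu_p=\frac1n\sum_{i=1}^n\prod_{j=1}^d\psi^j(X_i^j)$ and $\hat\mu_q=\prod_{j=1}^d\bar\psi^j$ with $\bar\psi^j=\frac1n\sum_{i=1}^n\psi^j(X_i^j)$, together with their population counterparts $\mu_p=\mbE[\prod_j\psi^j(X^j)]$ and $\mu_q=\prod_j\mu^j$ where $\mu^j=\mbE[\psi^j(X^j)]$, so that $\widehat{\dHSIC}(\mcX_n)=\|\hat\mu_p-\hat\mu_q\|_{\mcH_{\bk}}$ and $\dHSIC_{\bk}(P)=\|\mu_p-\mu_q\|_{\mcH_{\bk}}$.

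For the fluctuation term, I would observe that taking the identity permutation in Proposition~\ref{sen_dhsic} shows that $\mcX_n\mapsto\widehat{\dHSIC}(\mcX_n)$ has bounded differences with constant $c=2d\sqrt{K_0}/n$, i.e.\ changing a single observation $\bX_i$ alters the statistic by at most $c$. Since $\bX_1,\dots,\bX_n$ are i.i.d.\ and hence independent, McDiarmid's inequality gives $\mbP(|\widehat{\dHSIC}(\mcX_n)-\mbE\widehat{\dHSIC}(\mcX_n)|\geq t)\leq 2\exp(-2t^2/(nc^2))=2\exp(-t^2 n/(2d^2K_0))$, which is the advertised tail with $C_2\asymp d^{-2}$.

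For the bias term I would use the reverse triangle inequality $|\widehat{\dHSIC}(\mcX_n)-\dHSIC_{\bk}(P)|\leq\|\hat\mu_p-\mu_p\|_{\mcH_{\bk}}+\|\hat\mu_q-\mu_q\|_{\mcH_{\bk}}$ and bound each summand in $L_1$. The first is an empirical average of i.i.d.\ RKHS-valued terms of squared norm at most $K_0$, so $\mbE\|\hat\mu_p-\mu_p\|\leq(\mbE\|\hat\mu_p-\mu_p\|^2)^{1/2}\leq\sqrt{K_0/n}$. The second requires the telescoping expansion $\hat\mu_q-\mu_q=\sum_{j=1}^d\bigl(\prod_{l<j}\bar\psi^l\bigr)(\bar\psi^j-\mu^j)\bigl(\prod_{l>j}\mu^l\bigr)$; invoking multiplicativity of the tensor-product norm and the \emph{surely} valid bounds $\|\bar\psi^l\|\leq\sqrt{K^l}$, $\|\mu^l\|\leq\sqrt{K^l}$ together with $\mbE\|\bar\psi^j-\mu^j\|\leq\sqrt{K^j/n}$ yields $\mbE\|\hat\mu_q-\mu_q\|\leq d\sqrt{K_0/n}$. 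Hence $|\mbE\widehat{\dHSIC}(\mcX_n)-\dHSIC_{\bk}(P)|\leq\mbE|\widehat{\dHSIC}(\mcX_n)-\dHSIC_{\bk}(P)|\leq(1+d)\sqrt{K_0/n}=:C_1\sqrt{K_0/n}$. Combining via the triangle inequality, the event $\{|\widehat{\dHSIC}(\mcX_n)-\dHSIC_{\bk}(P)|\geq C_1\sqrt{K_0/n}+t\}$ is contained in $\{|\widehat{\dHSIC}(\mcX_n)-\mbE\widehat{\dHSIC}(\mcX_n)|\geq t\}$, and the claimed bound follows from the McDiarmid step.

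The main obstacle is the bias control for the product-of-marginals embedding $\hat\mu_q=\prod_j\bar\psi^j$, which is a degree-$d$ V-statistic rather than a simple average; the telescoping decomposition combined with the deterministic norm bounds $\|\bar\psi^l\|\leq\sqrt{K^l}$ is precisely what keeps this term at the parametric rate $\sqrt{K_0/n}$ even though the coordinates of each $\bX_i$ are dependent under the alternative (so $\bar\psi^1,\dots,\bar\psi^d$ are not mutually independent). Everything else is routine once the bounded-differences constant is read off directly from Proposition~\ref{sen_dhsic}.
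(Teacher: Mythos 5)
Your proposal is correct, and while the first half coincides with the paper's argument, the second half takes a genuinely different and cleaner route. Like the paper, you read the bounded-differences constant off Proposition~\ref{sen_dhsic} (identity permutation) and apply McDiarmid's inequality to control $|\widehat{\dHSIC}(\mcX_n)-\mbE\widehat{\dHSIC}(\mcX_n)|$; this step is identical. Where you diverge is in bounding the bias $|\mbE\widehat{\dHSIC}(\mcX_n)-\dHSIC_{\bk}(P)|$. The paper stays in the supremum (IPM) form of dHSIC, applies Jensen's inequality, splits into a joint-distribution term and a product-of-marginals term, handles the first by symmetrization plus a Rademacher-complexity bound, and handles the second by a peeling/induction argument over $d$ that reduces $n$ observations in $d$ groups to $n-1$ observations in $d-1$ groups. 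You instead pass to the mean-embedding form $\widehat{\dHSIC}=\|\hat\mu_p-\hat\mu_q\|_{\mcH_{\bk}}$, use the reverse triangle inequality, bound $\mbE\|\hat\mu_p-\mu_p\|$ by a one-line second-moment computation, and control the degree-$d$ product term $\hat\mu_q-\mu_q$ by telescoping with the \emph{sure} bounds $\|\bar\psi^l\|\leq\sqrt{K^l}$, $\|\mu^l\|\leq\sqrt{K^l}$ — which, as you note, is exactly what neutralizes the dependence among $\bar\psi^1,\dots,\bar\psi^d$ under the alternative. Your route avoids empirical-process machinery entirely, yields the explicit constants $C_1=d+1$ and $C_2\asymp d^{-2}$, and replaces the paper's induction with a three-line algebraic identity; the paper's route has the virtue of paralleling the standard MMD concentration proofs it cites, but is otherwise more involved for the same rate. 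One point worth making explicit if you write this up: the multiplicativity $\|f_1\otimes\cdots\otimes f_d\|_{\mcH_{\bk}}=\prod_l\|f_l\|_{\mcH_{k^l}}$ you invoke holds because each telescoping summand is an elementary tensor in the product RKHS, a fact the paper itself uses in the proof of Proposition~\ref{sen_dhsic}.
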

\begin{proof}
    With a little abuse of notations, $C_1, C_2,\dots$ refer to universal constant only associated with the upper bounds for the kernels. Proposition~\ref{sen_dhsic} indicates that the empirical dHSIC has global sensitivity at most $C_3\sqrt{K_0}/n$. By McDiarmid's inequality \citep[Theorem~29]{Gretton2012AKT}, we have 
    \begin{equation}\label{eq_07}
        \mbP \{ | \widehat{\dHSIC}(\mathcal{X}_n) - \mbE[\widehat{\dHSIC}(\mathcal{X}_n)] | \geq t \} \leq 2 \exp \bigg( -\frac{C_2 t^2 n}{K_0} \bigg) \quad \text{for all } t \geq 0.
    \end{equation}
    We complete the proof by bounding the difference between the expectation of dHSIC and the population dHSIC. 
    \begin{align*}
    &| \mbE[\widehat{\dHSIC}(\mathcal{X}_n)] - \dHSIC_{\bk}(P_{X^1,\dots, X^d}) | \\
    =\ & \bigg| \mathbb{E} \bigg[ \sup_{f \in \mathcal{F}_{\bk}} \bigg\{\frac{1}{n} \sum_{i=1}^n f(X_i^1,\dots,X_i^d) - \frac{1}{n^d} \sum_{\bj^n_d} f(X_{i_1}^1,\dots,X_{i_d}^d) \bigg\} \\
    &- \sup_{f\in\mcF_{\bk}}\bigg\{\mbE_{P_{X^1,\dots, X^d}}[f(X^1,\dots, X^d)]-\mbE_{\otimes_{j=1}^d P_{X^j}}[f(X^1,\dots, X^d)]
    \bigg\} \bigg] \bigg| \\
    \overset{(i)}{\leq}\ & \mathbb{E}\sup_{f \in \mathcal{F}_{\bk}}\bigg|\bigg\{\frac{1}{n} \sum_{i=1}^n f(X_i^1,\dots,X_i^d) - \frac{1}{n^d} \sum_{\bj^n_d} f(X_{i_1}^1,\dots,X_{i_d}^d) \bigg\} \\
    & - \bigg\{\mbE_{P_{X^1,\dots, X^d}}[f(X^1,\dots, X^d)]-\mbE_{\otimes_{j=1}^d P_{X^j}}[f(X^1,\dots, X^d)]
    \bigg\}  \bigg| \\
    \overset{(ii)}{\leq}\ & \mathbb{E}\sup_{f \in \mathcal{F}_{\bk}}\bigg|\frac{1}{n} \sum_{i=1}^n f(X_i^1,\dots,X_i^d) - \mbE_{P_{X^1,\dots, X^d}}[f(X^1,\dots, X^d)]\bigg|\\
    & + \mathbb{E}\sup_{f \in \mathcal{F}_{\bk}}\bigg|
    \frac{1}{n^d} \sum_{\bj^n_d} f(X_{i_1}^1,\dots,X_{i_d}^d)-\mbE_{\otimes_{j=1}^d P_{X^j}}[f(X^1,\dots, X^d)]  \bigg| \\
    :=\ & \text{(I)} + \text{(II)},
\end{align*}
where Step~$(i)$ holds by Jensen's inequality and the triangle inequality, and Step~$(ii)$ uses the triangle inequality. Let $\{(\tilde X^1_i,\dots,\tilde X^d_i)\}$ be i.i.d.\ copies of $(X^1,\dots, X^d)$ and $\{\varepsilon_i\}_{i=1}^n$ be i.i.d.\ Rademacher random variables. By Jensen's inequality, we have
\begin{align}\label{eq_06}
    \text{(I)}&\leq \mathbb{E}\sup_{f \in \mathcal{F}_{\bk}}\bigg|\frac{1}{n} \sum_{i=1}^n f(X_i^1,\dots,X_i^d) - \frac{1}{n} \sum_{i=1}^n f(\tilde X_i^1,\dots,\tilde X_i^d)\bigg|\nonumber\\
    &\leq \mathbb{E}\sup_{f \in \mathcal{F}_{\bk}}\bigg|\frac{1}{n} \sum_{i=1}^n \varepsilon_i\{f(X_i^1,\dots,X_i^d) - f(\tilde X_i^1,\dots,\tilde X_i^d)\} \bigg|\nonumber\\
    &\leq 2\mathbb{E}\sup_{f \in \mathcal{F}_{\bk}}\bigg|\frac{1}{n} \sum_{i=1}^n \varepsilon_if(X_i^1,\dots,X_i^d)\bigg|\leq 2\sqrt{K_0/n},
\end{align}
where the last inequality is a direct application of \citet[Lemma~22]{Bartlett2003RademacherAG}. 
It is sufficient to verify that $\text{(II)}\lesssim\sqrt{K_0/n}$ to complete the proof.
\begin{align*}
    \text{(II)}\leq\ &\frac{1}{n}\sum_{j=1}^n\mathbb{E}\sup_{f \in \mathcal{F}_{\bk}}\bigg|
    \frac{1}{n^{d-1}} \sum_{\bj^n_{d-1}} f(X_{i_1}^1,\dots,X_{i_{d-1}}^{d-1},X_{j}^{d})-\mbE_{\otimes_{j=1}^d P_{X^j}}[f(X^1,\dots, X^d)]  \bigg|\\
    =\ &\mathbb{E}\sup_{f \in \mathcal{F}_{\bk}}\bigg|
    \frac{1}{n^{d-1}}\bigg( \sum_{\bj^{n-1}_{d-1}} f(X_{i_1}^1,\dots,X_{i_{d-1}}^{d-1},X_{n}^{d})+\sum_{\bj^{n}_{d-1}\backslash\bj^{n-1}_{d-1}} f(X_{i_1}^1,\dots,X_{i_{d-1}}^{d-1},X_{n}^{d})\bigg)\\
    &-\mbE_{\otimes_{j=1}^d P_{X^j}}[f(X^1,\dots, X^d)]  \bigg|\\
    \leq\ &\mathbb{E}\sup_{f \in \mathcal{F}_{\bk}}\bigg|
    \frac{1}{n^{d-1}}\sum_{\bj^{n-1}_{d-1}} f(X_{i_1}^1,\dots,X_{i_{d-1}}^{d-1},X_{n}^{d})-\mbE_{\otimes_{j=1}^d P_{X^j}}[f(X^1,\dots, X^d)]  \bigg|\\
    +&\mathbb{E}\sup_{f \in \mathcal{F}_{\bk}}\bigg|\frac{1}{n^{d-1}}\sum_{\bj^{n}_{d-1}\backslash\bj^{n-1}_{d-1}} f(X_{i_1}^1,\dots,X_{i_{d-1}}^{d-1},X_{n}^{d})\bigg|\\
    \leq\ &\mathbb{E}\sup_{f \in \mathcal{F}_{\bk}}\bigg|
    \frac{1}{(n-1)^{d-1}}\sum_{\bj^{n-1}_{d-1}} f(X_{i_1}^1,\dots,X_{i_{d-1}}^{d-1},X_{n}^{d})-\mbE_{\otimes_{j=1}^d P_{X^j}}[f(X^1,\dots, X^d)]  \bigg|\\
    +& O(n^{-d})\sum_{\bj^{n-1}_{d-1}}\mathbb{E}\sup_{f \in \mathcal{F}_{\bk}}|f(X_{i_1}^1,\dots,X_{i_{d-1}}^{d-1},X_{n}^{d})|
    +\frac{1}{n^{d-1}}\sum_{\bj^{n}_{d-1}\backslash\bj^{n-1}_{d-1}}\mathbb{E}\sup_{f \in \mathcal{F}_{\bk}}| f(X_{i_1}^1,\dots,X_{i_{d-1}}^{d-1},X_{n}^{d})|\\
    \leq\ &\mathbb{E}\sup_{f \in \mathcal{F}_{\bk}}\bigg|
    \frac{1}{(n-1)^{d-1}}\sum_{\bj^{n-1}_{d-1}} f(X_{i_1}^1,\dots,X_{i_{d-1}}^{d-1},X_{n}^{d})-\mbE_{\otimes_{j=1}^d P_{X^j}}[f(X^1,\dots, X^d)]  \bigg|+C_4\sqrt{K_0}/n.
\end{align*}
We take the an expectation conditional on $X^d$ and $X^d_n$. Then the problem reduces to $n-1$ observations among $d-1$ groups. By mathematical induction principle, the calculations to the expectation come down to the expectation for a single variable $X^1$, which is no more than $2\sqrt{K_0/n}$ by \eqref{eq_06}. Therefore, we have
$$
    | \mathbb{E}[\widehat{\dHSIC}(\mathcal{X}_n)] - \dHSIC_{\bk}(P_{X^1,\dots, X^d}) |\leq C_1\sqrt{K_0/n}. 
$$
The proof is completed by combining the above inequality and \eqref{eq_07}. 
\end{proof}

\end{appendices}

\end{document}